\newcommand{\norm}[1]{ \left\|  #1 \right\| }
\newcommand{\be}{\begin{equation}}
\newcommand{\ee}{\end{equation}}
\def\al{\alpha}
\def\De{\Delta}
\def\ga{\gamma}
\def\lam{\lambda}
\def\veps{\varepsilon}
\def\inv{^{-1}}
\def\iy{\infty}
\newcommand{\R}{\mathbb R}
\renewcommand{\theequation}{\thesection.\arabic{equation}}
\newtheorem{theorem}{Theorem}[section]
\newtheorem{lemma}[theorem]{Lemma}
\newtheorem{proposition}[theorem]{Proposition}
\newtheorem{corollary}[theorem]{Corollary}
\theoremstyle{definition}
\newtheorem{definition}[theorem]{Definition}
\newtheorem{remark}[theorem]{Remark}
\title[Orbital Stability for a fourth-order NLS]{ 
Orbital Stability of Standing Waves for a fourth-order nonlinear Schr\"odinger equation with mixed  dispersions}
\author{Tingjian Luo}
\address[Tingjian Luo]{School of Mathematics and Information Sciences, Guangzhou University, Guangzhou 510006, China}
\email{luotj@gzhu.edu.cn}
\author{Shijun Zheng}
\address[Shijun Zheng]{Department of Mathematical Sciences\\
Georgia Southern University\\
Statesboro, Georgia 30460-8093, United States}
\email{szheng@GeorgiaSouthern.edu}
\author{Shihui Zhu}
\address[Shihui Zhu]{School of Mathematical Sciences, Sichuan Normal University, Chengdu 610066, China}
\email{shihuizhumath@163.com;\ shihuizhumath@sicnu.edu.cn}
\date{\today}
\subjclass[2010]{35Q55, 35J50, 37K45}
\keywords{fourth-order NLS,  standing wave, profile decomposition, orbital stability}
\begin{document}
	
\begin{abstract} In this paper, we study the ground state standing wave solutions for the focusing bi-harmonic nonlinear Schr\"{o}dinger equation with a $\mu$-Laplacian term (BNLS). 
Such  BNLS models the propagation of intense laser beams in a bulk medium with a  second-order dispersion term. 
Denote by $Q_p$  the ground state for the BNLS with $\mu=0$. 
 We prove that in the mass-subcritical regime $p\in (1,1+\frac{8}{d})$, 
there exist orbitally stable {ground state solutions} for the BNLS
when $\mu\in ( -\lambda_0, \iy)$   %
  for some $\lambda_0=\lambda_0(p, d,\|Q_p\|_{L^2})>0$.  
Moreover, in the mass-critical case $p=1+\frac{8}{d}$\,, we prove the orbital stability  on certain mass level below $\|Q^*\|_{L^2}$, 
provided $\mu\in (-\lam_1,0)$, where $\lam_1=\dfrac{4\|\nabla Q^*\|^2_{L^2}}{\|Q^*\|^2_{L^2}}$ and $Q^*=Q_{1+8/d}$. 
The proofs are mainly based on the profile decomposition and a sharp Gagliardo-Nirenberg type inequality.  
Our treatment allows to fill  the gap concerning existence of the ground states for the BNLS 
when $\mu$ is negative and $p\in (1,1+\frac8d]$. 
 \end{abstract}	
	
\maketitle

\section{Introduction}
\setcounter{equation}{0}
  Consider the following  fourth-order  nonlinear
Schr\"{o}dinger equation, called  bi-harmonic NLS (BNLS), with a focusing nonlinearity 
\begin{equation}\label{BNLS}
i\psi_t-\Delta^2
\psi +\mu \Delta \psi +|\psi|^{p-1}\psi=0,\quad t\geq 0,\ x\in \mathbb{R}^d,
\end{equation}
where $i=\sqrt{-1}$ and the parameter $\mu \in \R$; $\psi=\psi(t,x)$:
$\mathbb{R}\times \mathbb{R}^d \to \mathbb{C}$ is the complex-valued
wave function and $d$ is the spacial dimension;
$\Delta=\sum\limits_{j=1}^{d}\frac{\partial^2}{\partial x_j^2}$
is the Laplace operator in $\mathbb{R}^d$ and
 $\Delta^2$ is the
biharmonic operator;
 $1<p<\frac{2d}{(d-4)^+}-1$
(here by convention $\frac{2d}{(d-4)^+}=+\infty$ if
$d=1,2,3,4$; $\frac{2d}{(d-4)^+}=\frac{2d}{d-4}$ if $d\geq 5$). 
Impose the  initial condition for Eq.  (\ref{BNLS})
\begin{equation}
 \label{BNLS1} \psi(0,x)=\psi_{0}\in H^2, \quad x\in \mathbb{R}^d,
\end{equation}
where $H^2=H^2(\mathbb{R}^d):=\{v\in L^2\ | \ \nabla v\in L^2, \Delta v\in L^2 \}$ is the standard Sobolev space which serves as the energy space.  The BNLS type equation 
was  introduced in \cite{Karpman1996,Karpman2000} 
   where it  took into account the role of small 
fourth-order dispersion term in the propagation of intense laser 
beams in a bulk medium with Kerr nonlinearity.  
 The  case $\mu=0$ was  considered earlier in \cite{IvanKo83, Tur85} 
in the context of stability of solitons in magnetic materials when the effective quasi-particle mass becomes infinite.  In the setting of beam modeling for optical fibre, a nice derivation from the Helmholtz equation to the BNLS can be found in \cite{Fibich2002} where the nonparaxial effect contributes to the perturbed NLS
with a fourth-order correction term. 

 Recently, the fourth-order nonlinear Schr\"{o}dinger equations have received increasing attentions. 
 The local well-posedness  for the  Cauchy problem 
(\ref{BNLS})-(\ref{BNLS1}) in $H^2$ was obtained in  
\cite{BenKochSaut2000,KenigPonceVega1991,Pausader2007DPDE}.  
  Fibich, Ilan and Papanicolaou 
\cite{Fibich2002}  
 studied the global well-posedness for  (\ref{BNLS})-(\ref{BNLS1})  in $H^2$  
 in the  case where $\mu\ge 0$ and $p\in (1,1+\frac{8}{d}]$.
If  $p\geq1+\frac8d$\ ,   
Boulenger and  Lenzmann  \cite{BL2015}  proved the existence of blowup  solutions for (\ref{BNLS})-(\ref{BNLS1}), also see related numerical results in \cite{BaruchFibichMandelbaum2010}. 
 This suggests that $p=1+\frac8d$  is  the critical exponent for global existence and blowup for   (\ref{BNLS}). 
{The articles \cite{Miao2009JDE,  PauShao2010, Segata2006} studied the scattering for the fourth-order NLS. 
   
   In this paper we are concerned with   the existence of standing wave solutions and their stability 
   for (\ref{BNLS}). 
    Let $\mu, \omega\in \R$ and $u=u(x)$ be a solution of the following elliptic equation  
 \be\label{1.3} \Delta^2 u-\mu \Delta u+\omega u-|u|^{p-1}
 u=0,\ \ u\in H^2.\ee
 Then $\psi(t,x)=e^{i\omega t}u(x)$ is a {\em standing wave  solution} of  (\ref{BNLS}), see
   \cite{Cazenave2003,Levandosky1998, NataliPastor2015}. 
   Therefore, it is equivalent to study  the problem concerning the existence and stability  for the stationary solutions of (\ref{1.3}). 
For such problem the cases $\mu\ge 0$, $p\in (1,1+\frac{8}{(d-4)^+})$
were considered in \cite{BaruchFibichMandelbaum2010,BCMN,BCGJ,Segata2010,zhuzhangyang2010}.   
 Numerical studies for (\ref{BNLS}) can be found  in 
  \cite{Fibich2002,Karpman1996,Karpman2000},  \cite{BaruchFibichMandelbaum2010} and the references therein.  
   When $\mu\ge 0$, $p\geq 1+\frac{8}{d}$, the evolution system (\ref{BNLS})-(\ref{BNLS1}) may have  
   strong instability that shows blowup properties  \cite{BF2011,BCGJ2,zhuzhangyang2010}. 
However, the case $\mu<0$ does not seem to have been well understood in the preceding literature. 
In such case, it remains open  how to construct {\em orbitally stable} standing waves,  
where  the operators $\Delta^2$ and $-\mu \Delta$ have counter-competing effect, which may lead to technical difficulties in the analysis and construction of stable states of \eqref{1.3}. 

In order to treat the case $\mu<0$ for the existence and stability of ground states for (\ref{1.3})
   we consider the following minimization problem 
\begin{equation}\label{VP}
m_{\mu}:=\inf\limits_{u\in B_1} E_{\mu}(u),  \tag{VP}
\end{equation}
where   $B_1:=\{u\in H^2\ |\ \int |u|^2 dx = 1 \}$ and 
\begin{align}\label{E:4NLS-mu}
 E_{\mu}(u):= \frac 12 \int|\Delta  u|^{2}dx+\frac{\mu}{2} \int|\nabla u|^{2}dx-\frac {1}{p+1} \int |u|^{p+1}dx
 \end{align} is the associated energy for  \eqref{1.3}.  
Denote the set of minimizers for (\ref{VP})
\begin{eqnarray}\label{3.18}
    \mathcal{M}_{\mu}:=\{u\in B_1 \;\vert \;  u\;\text{ is a  minimizer of (\ref{VP})}\}.
\end{eqnarray}
It is easy to see that for any $u\in \mathcal{M}_{\mu}$, there exists $\omega \in \mathbb{R}$, namely, a Lagrange multiplier, such that $(u, \omega)$ solves the equation (\ref{1.3}).
Since $u$ minimizes the energy $E_\mu$ on $B_1$, 
we call a minimizer of (\ref{VP}) a {\bf ground state solution} (g.s.s.) of (\ref{1.3}). The definition of orbital stability for $M_\mu$ is standard as given in Section \ref{s:prelimin}.

In Theorem \ref{th1} and Theorem \ref{th22}  we establish the existence and orbital stability for the set of ground states for (\ref{BNLS})-(\ref{BNLS1}) for suitable $\mu\in \R$ and $p\in (1,1+\frac8d]$. 
In particular, 
Theorem \ref{th1} shows that if $1<p<1+\frac8d$ and $\mu\in (-\lam_0,0)$ for some constant $\lam_0>0$,  
then there exist ground states for  (\ref{BNLS}), which are orbitally stable.   In the critical case $p=1+\frac8d$ and $\mu\in (-\lam_1,0)$ for some  constant $\lam_1>0$, 
we prove in Theorem \ref{th22} 
 that if  the initial data satisfies $\|\psi_0\|_{L^2}<\|Q^*\|_{L^2}$, then the  standing waves of  (\ref{BNLS}) are orbitally stable, where $Q^*=Q_{1+8/d}$ is a fixed ground state of 
  \be\label{1.4}
 \Delta^2Q+ \frac4dQ-|Q|^{\frac8d}Q=0,\ \ Q\in H^2.
 \ee 
Since the uniqueness of solutions to (\ref{1.4}) is not determined yet in the literature, throughout this paper
we will choose $Q^*$ to be a fixed  minimizer for the $J$-functional (\ref{JpdU}). The same choice applies to $Q_p:=Q_{p,d}$ for other values of $p$, see (\ref{1.5}).
Note that, in view of (\ref{B-L2critic}),  the $L^2$-norm of $Q_p$ is independent of the choices of any particular $J$-minimizer. 
The threshold $\|Q^*\|_{L^2}$ is sharp in virtue of the instability  result  in \cite{BaruchFibichMandelbaum2010,BL2015,Fibich2002} 
for  (\ref{BNLS})-(\ref{BNLS1}) with $\mu\ge 0$ and $p=1+\frac8d$
if the initial data $\|\psi_0\|_{L^2}\geq \|Q^*\|_{L^2}$. 

To prove the existence of the set $\mathcal{M}_\mu$ 
for a general range $\mu\in \R$ and \mbox{$p\in (1,1+\frac8d]$,} we mainly apply the profile decomposition method 
(Lemma \ref{decomposition}), along with  the Gagliardo-Nirenberg type inequality (Lemma \ref{prop2.4}),
 to show the concentration compactness: Any minimizing sequence for (\ref{VP}) or (\ref{1.7}) has a subsequence converging to a g.s.s. of (\ref{1.3}) modular translations.  
Then the stability of standing waves for (\ref{BNLS}) will follow via the standard arguments as given in Section \ref{s:Q-mu_subcri}. 
The profile decomposition  was initially proposed in G\'{e}rard  \cite{Gerard1998}. Hmidi and Keraani \cite{HmidiKeraani2005} obtained the profile decomposition of bounded sequences in $H^1$ 
to deal with the classical second-order NLS.  In  \cite{zhuzhangyang2010}, Zhu et al. obtained the profile decomposition of bounded sequences in $H^2$. 

Now let us precisely elaborate these two theorems. 
In Section \ref{s:Q-mu_subcri}, we consider the $L^2$-subcritical case: $1<p<1+\frac8d$. By scaling argument and (\ref{E:4NLS-mu}), we  observe  that $m_{\mu}\in(-\infty,  0]$ is continuous and non-decreasing with respect to $\mu\in \mathbb{R}$  (Lemma \ref{lm3.1} and  Lemma \ref{lm3.0}). Denote
\begin{eqnarray}\label{1.7.1}
\mu_0:=\mu_{0,p}=\sup \{\mu>0\ | \ m_{\mu}<0 \}.
\end{eqnarray}
Then, according to Lemma \ref{lm3.2},
\begin{align*}
\begin{cases} 
\mu_0=\iy & \text{if}\; p\in (1,1+\frac4d)\\ 
0<\mu_0<\iy & \text{if}\; p\in [1+\frac4d,1+\frac8d).
\end{cases}\end{align*}
 Let $Q_p=Q_{p,d}$ be a fixed ground state of the following bi-harmonic equation
\begin{eqnarray}\label{1.5}
\frac{(p-1)d}{8}\Delta^2
Q+\Big[1+\frac{(p-1)(4-d)}{8}\Big] Q-|Q|^{p-1}Q=0,\quad Q\in H^2.
\end{eqnarray}
The existence of $u=Q_p$ in (\ref{1.5}) was proven for $p\in (1, \frac{2d}{(d-4)^+}-1)$  in e.g. \cite{BeFrVi14a,zhuzhangyang2010}, where   
$Q_p$ serves as a minimizer for the $J$-functional in (\ref{JpdU}).  
The uniqueness for $Q_p$
seems still open for $d> 1$ except in one dimension \cite{FrLen13a}.  
We state the first main theorem concerning the existence and stability for (\ref{VP}). 
\begin{theorem}\label{th1}  Let  $p\in (1,1+\frac8d)$ and
 $\mu\in \mathbb{R}$ satisfy one of the following conditions:
\begin{itemize}
\item [(1)] $1<p<1+\frac{4}{d}$, $\forall \mu \in (0,+\infty)$;
\item [(2)] $1+\frac{4}{d}\leq p<1+\frac{8}{d}$,   $\forall \mu \in (0, \mu_0)$;
\item [(3)] $1< p<1+\frac{8}{d}$,  $\mu=0$;
 \item [(4)] $1< p<1+\frac{8}{d}$,  $\forall \mu\in (-\lambda_0,0)$ for some
 $\lambda_0:=\lambda_0(p,d, \|Q_p\|_2)>0$.
\end{itemize} 
Then the set $\mathcal{M}_{\mu}\neq \emptyset$ and is orbitally stable.
\end{theorem}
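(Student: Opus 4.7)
The plan is to prove existence of $\mathcal{M}_\mu$ by establishing that every minimizing sequence for (VP) is, up to translation, relatively compact in $H^2$, and then to deduce orbital stability from this compactness via the conservation of mass and energy for the BNLS flow. The argument is the standard concentration-compactness scheme built on profile decomposition, adapted to the bi-harmonic setting.

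\emph{Step 1 (Coercivity and strict negativity).} I first verify $-\infty<m_\mu<0$ in each of the four regimes. Coercivity in cases (1)--(3) follows directly from the sharp Gagliardo--Nirenberg inequality of Lemma \ref{prop2.4}: for $p<1+8/d$ the exponent $d(p-1)/4$ of $\|\Delta u\|_{L^2}$ in the $L^{p+1}$ bound is strictly below $2$, so $E_\mu$ dominates $\|\Delta u\|_{L^2}^2$ as $\|\Delta u\|_{L^2}\to\infty$. Strict negativity $m_\mu<0$ in cases (1)--(3) is obtained by testing $E_\mu$ on the family $u_\lambda(x)=\lambda^{d/2}u(\lambda x)$ with $u\in B_1$ and sending $\lambda\to 0^+$. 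For case (4) I use the Cauchy--Schwarz interpolation
$$\|\nabla u\|_{L^2}^2\le \|u\|_{L^2}\|\Delta u\|_{L^2}$$
to bound the negative Laplacian term by a controlled fraction of the biharmonic term; the threshold $\lambda_0=\lambda_0(p,d,\|Q_p\|_{L^2})$ is precisely the value of $|\mu|$ below which this bound, combined with the sharp GN constant attained by $Q_p$, still produces a uniform lower bound of $E_\mu$ over $B_1$. Strict negativity in case (4) then follows from the monotonicity of $m_\mu$ in $\mu$ and $m_0<0$.

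\emph{Step 2 (Profile decomposition and compactness).} Let $\{u_n\}\subset B_1$ be a minimizing sequence; Step 1 gives a uniform $H^2$ bound. Apply the $H^2$ profile decomposition of Lemma \ref{decomposition} to write
$$u_n=\sum_{j=1}^J V^j(\cdot-x_n^j)+r_n^J,\qquad |x_n^j-x_n^k|\to\infty\ (j\ne k),$$
with $r_n^J$ vanishing in $L^{p+1}$ and with asymptotic Pythagorean identities for $\|\cdot\|_{L^2}^2$, $\|\nabla\cdot\|_{L^2}^2$, $\|\Delta\cdot\|_{L^2}^2$. Passing to the limit in $E_\mu(u_n)$ yields
$$m_\mu\ge \sum_{j} E_\mu(V^j)+\tfrac12\limsup_n\bigl(\|\Delta r_n^J\|_{L^2}^2+\mu\|\nabla r_n^J\|_{L^2}^2\bigr),\qquad 1=\sum_j\|V^j\|_{L^2}^2+\lim_n\|r_n^J\|_{L^2}^2.$$
A scaling comparison of $m_\mu$ on spheres of mass $\alpha\in(0,1)$ gives the strict subadditivity $m_\mu<m_\mu^{(\alpha)}+m_\mu^{(1-\alpha)}$, which together with $m_\mu<0$ rules out dichotomy: exactly one profile $V^1$ survives with $\|V^1\|_{L^2}=1$, and $r_n\to 0$ strongly in $H^2$. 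Hence $u_n(\cdot+x_n^1)\to V^1\in \mathcal{M}_\mu$ strongly in $H^2$. Orbital stability then follows by the standard contradiction argument: any sequence of initial data $H^2$-approaching $\mathcal{M}_\mu$ whose solutions leave a fixed neighborhood of $\mathcal{M}_\mu$ modulo translations provides, via conservation of mass and energy, a new minimizing sequence for (VP), which by the compactness just proved must accumulate on $\mathcal{M}_\mu$, a contradiction.

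The main obstacle is case (4). The counter-competing signs of $\Delta^2$ and $-\mu\Delta$ threaten both the coercivity of Step 1 and the strict mass-subadditivity of Step 2. The constant $\lambda_0$ must be calibrated so that the sharp GN inequality of Lemma \ref{prop2.4} (whose constant depends on $\|Q_p\|_{L^2}$) continues to dominate the negative quadratic $\tfrac{\mu}{2}\|\nabla u\|_{L^2}^2$ after Cauchy--Schwarz interpolation, and so that strict subadditivity survives for every $\mu\in(-\lambda_0,0)$. Once this calibration is carried out explicitly, the concentration-compactness scheme proceeds uniformly across all four regimes.
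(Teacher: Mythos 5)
Your overall scheme (coercivity, profile decomposition, single surviving profile, contradiction argument for stability) matches the paper's, but there is a genuine gap in case (4), and it sits exactly where the paper has to work hardest. You assign $\lambda_0$ to coercivity: you say it is ``the value of $|\mu|$ below which'' the Cauchy--Schwarz bound $\|\nabla u\|_2^2\le\|u\|_2\|\Delta u\|_2$ ``still produces a uniform lower bound of $E_\mu$ over $B_1$.'' But that lower bound holds for \emph{every} $\mu<0$ with no restriction: on $B_1$ one has $E_\mu(u)\ge \frac12 y^2+\frac{\mu}{2}y-C_{p,d}\,y^{(p-1)d/4}$ with $y=\|\Delta u\|_2$ and $(p-1)d/4<2$, which is bounded below and coercive in $y$ regardless of how negative $\mu$ is. So $\lambda_0$ cannot come from coercivity. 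Its actual role in the paper is to guarantee the \emph{non-vanishing} of minimizing sequences, i.e.\ a uniform lower bound $\int|v_n|^{p+1}\ge C_0>0$. For $\mu\ge0$ this follows for free from $m_\mu<0$ (since then $\frac{1}{p+1}\int|v_n|^{p+1}\ge -\frac{m_\mu}{2}$), but for $\mu<0$ the kinetic part $\frac12\|\Delta v_n\|_2^2+\frac{\mu}{2}\|\nabla v_n\|_2^2$ can a priori be negative and the same computation fails. The paper resolves this by introducing $f_k(y)=\frac12y^2-\frac{k}{2}y-C_{p,d}y^{(p-1)d/4}$, comparing $f_k(\|\Delta v_n\|_2)$ with $m_0<0$ to extract a strictly positive lower bound $\|\Delta v_n\|_2>y_{1,k}$, and defining $\lambda_0=\sup_{k>0}\min\{k,y_{1,k}\}$ precisely so that $\|\Delta v_n\|_2+\mu>0$ along the sequence, which then yields $\int|v_n|^{p+1}\gtrsim -m_\mu>0$. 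Your proposal never proves non-vanishing for $\mu<0$, and without it no nonzero profile $V^j$ is guaranteed to exist, so the whole compactness step collapses.

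The same omission undermines your Step 2. You invoke strict subadditivity $m_\mu<m_\mu^{(\alpha)}+m_\mu^{(1-\alpha)}$ ``by a scaling comparison,'' but the standard proof of strict sub-homogeneity $m_\mu(\theta c)<\theta\, m_\mu(c)$ itself requires a positive lower bound on $\|u_n\|_{p+1}^{p+1}$ along minimizing sequences --- again the non-vanishing property. (Incidentally, the paper avoids subadditivity altogether: it renormalizes each profile $V^j$ to $B_1$, writes $E_\mu(V^j)=\rho_j^{-2}E_\mu(V^j_{\rho_j})+\frac{\rho_j^{p-1}-1}{p+1}\|V^j\|_{p+1}^{p+1}$ with $\rho_j=\|V^j\|_2^{-1}$, and uses $E_\mu(V^j_{\rho_j})\ge m_\mu$ together with the mass Pythagorean identity to force $\|V^{j_0}\|_2=1$ directly; this is a cleaner route than proving subadditivity, but it too hinges on the non-vanishing lower bound to make the correction term strictly positive when some $\|V^j\|_2<1$.) To repair your proof you must add the non-vanishing lemma for $\mu\in(-\lambda_0,0)$ and identify $\lambda_0$ accordingly; as written, the calibration of $\lambda_0$ you describe does not produce the constant the theorem needs.
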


Note that when $\mu=0$, 
 the  problem \eqref{VP} reduces to the following:
\begin{eqnarray}\label{VP0}
m_0:=\inf\limits_{u\in B_1} E_0(u),
\end{eqnarray}
with
$$ E_0(u):= \frac 12 \int|\Delta  u|^{2}dx-\frac {1}{p+1} \int |u|^{p+1}dx.$$
From Theorem \ref{th1} (3) and Lemma \ref{lm3.4}, we  have following corollary.
\begin{corollary}\label{col1}
Let $p\in(1,1+\frac{8}{d})$ and $\mu=0$. Then $-\iy<m_0<0$ and (\ref{VP0}) admits a minimizer $u_0\in B_1$. Further there exists a Lagrange multiplier $\omega_0 \in \mathbb{R}$, such that $(u_0, \omega_0)$ solves \eqref{1.3} with $\mu=0$.   
\end{corollary}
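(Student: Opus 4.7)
The plan is to read this corollary as a direct specialization of Theorem \ref{th1}(3) at $\mu=0$, paired with the quantitative bounds on $m_\mu$ presumably assembled in Lemma \ref{lm3.4}. First I would invoke Theorem \ref{th1}(3): since $p\in(1,1+\tfrac{8}{d})$, the set $\mathcal{M}_0$ of minimizers of (\ref{VP0}) is nonempty, so pick any $u_0\in\mathcal{M}_0$. This already gives the existence statement; the only remaining content of the corollary is the finiteness and negativity of $m_0$, and the extraction of the Lagrange multiplier.

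For $m_0>-\infty$, I would apply the sharp Gagliardo--Nirenberg type inequality from Lemma \ref{prop2.4} to bound the nonlinear term:
\[
\int|u|^{p+1}\,dx \;\le\; C\,\|\Delta u\|_{L^2}^{\,\theta(p+1)}\,\|u\|_{L^2}^{(1-\theta)(p+1)},
\]
with $\theta(p+1)<2$ in the $L^2$-subcritical regime $p<1+\tfrac{8}{d}$. Combined with $\|u\|_{L^2}=1$ on $B_1$, this turns $E_0(u)$ into a coercive expression in $\|\Delta u\|_{L^2}$ (via Young's inequality), which yields a uniform lower bound on $E_0$ over $B_1$ and hence $m_0>-\infty$. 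For $m_0<0$, I would use a scaling test: pick any nonzero $\varphi\in H^2$ with $\|\varphi\|_{L^2}=1$ and set $\varphi_\lambda(x)=\lambda^{d/2}\varphi(\lambda x)$, which still lies in $B_1$. Then
\[
E_0(\varphi_\lambda)=\frac{\lambda^4}{2}\|\Delta\varphi\|_{L^2}^{2}-\frac{\lambda^{d(p-1)/2}}{p+1}\|\varphi\|_{L^{p+1}}^{p+1},
\]
and since $p<1+\tfrac{8}{d}$ forces $d(p-1)/2<4$, the nonlinear term dominates as $\lambda\to 0^+$, producing $E_0(\varphi_\lambda)<0$ for small $\lambda$; this yields $m_0<0$. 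These two computations should be exactly what Lemma \ref{lm3.4} packages.

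Finally, for the Lagrange multiplier, since $u_0$ minimizes the smooth functional $E_0$ on the smooth constraint manifold $B_1=\{\|u\|_{L^2}^2=1\}$, the constrained Euler--Lagrange principle yields some $\omega_0\in\R$ with $E_0'(u_0)=-\omega_0 u_0$ in $H^{-2}$. Unfolding this identity gives
\[
\Delta^2 u_0+\omega_0 u_0-|u_0|^{p-1}u_0=0,
\]
which is exactly (\ref{1.3}) with $\mu=0$. No serious obstacle is expected here, because Theorem \ref{th1}(3) already does the heavy lifting (existence of minimizers via the profile decomposition); the corollary only packages the obvious scaling/coercivity bookkeeping and the standard Lagrange multiplier step. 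If there is a subtle point, it is simply checking that the constraint qualification holds at $u_0$, i.e., $u_0\neq 0$ in $L^2$, which is automatic from $u_0\in B_1$.
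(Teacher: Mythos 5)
Your proposal is correct and follows essentially the same route as the paper: the paper derives the corollary directly from Theorem \ref{th1}(3) (existence of minimizers), the coercivity estimate \eqref{00002} and the scaling computation of Lemma \ref{lm3.4} (which give $-\infty<m_0<0$), and the standard constrained Euler--Lagrange argument already noted in the introduction for the multiplier $\omega_0$. Nothing is missing.
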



In view of Theorem \ref{th1} along with the profile decomposition argument, 
we obtain in Section \ref{s:Q-mu_subcri} certain asymptotic behavior of minimizers for $m_{\mu}$ as $\mu\to 0$.
\begin{corollary}\label{th1.2}
Let  $1<p<1+\frac{8}{d}$. Let $\{\mu_k\}_{k=1}^{+\infty}$ be a sequence with $\mu_k\to 0$ as $k\to +\infty$ and let $\{u_k\}_{k=1}^{+\infty}\subset B_1$ be a sequence of minimizers for $m_{\mu_k}$. Then there exists  $u_0\in B_1$ and a subsequence $\{u_{k_j}\}\subset \{u_k\}$
such that as $j\to+\iy$,
$$u_{k_j} \longrightarrow u_0 \quad \mbox{in } H^2.$$
In particular, $u_0\in B_1$ is a minimizer of $m_0$, where $m_0$ is given by \eqref{VP0}.
\end{corollary}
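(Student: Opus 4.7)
My plan is to combine the continuity of $\mu\mapsto m_\mu$ at $\mu=0$ (Lemma~\ref{lm3.1}) with the $H^2$ profile decomposition of Lemma~\ref{decomposition} in order to identify, modulo a translation, a single non-trivial weak limit of $\{u_k\}$ as a minimizer of $m_0$. First, I would check $H^2$-boundedness: since $\mu_k\to 0$ we may assume $|\mu_k|\le 1$; then using the interpolation $\|\nabla u\|_{L^2}^2\le \|u\|_{L^2}\|\Delta u\|_{L^2}$ and the sharp Gagliardo--Nirenberg inequality (Lemma~\ref{prop2.4}), both the $\mu_k$-Laplacian term and the nonlinear term in $E_{\mu_k}(u_k)$ are dominated by a small multiple of $\|\Delta u_k\|_{L^2}^2$ plus a constant. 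The mass-subcriticality $p<1+\tfrac{8}{d}$ ensures that the $\|\Delta u_k\|_{L^2}$-exponent in the nonlinear term is strictly less than $2$, so combined with $m_{\mu_k}=E_{\mu_k}(u_k)\to m_0$ (Lemma~\ref{lm3.1}) this yields $\sup_k\|u_k\|_{H^2}<\infty$.

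Passing to a subsequence, Lemma~\ref{decomposition} gives
$$
u_k(x)=\sum_{j=1}^{\ell}U^j(x-x_k^j)+r_k^\ell(x),
$$
with pairwise divergent shifts $|x_k^j-x_k^{j'}|\to\infty$, remainder satisfying $\limsup_k\|r_k^\ell\|_{L^{p+1}}\to 0$ as $\ell\to\infty$, and the asymptotic Pythagorean identities for $\|\cdot\|_{L^2}$, $\|\nabla\cdot\|_{L^2}$, and $\|\Delta\cdot\|_{L^2}$. Let $\alpha_j:=\|U^j\|_{L^2}^2\in[0,1]$ and set $I_0(\alpha):=\inf\{E_0(u):\|u\|_{L^2}^2=\alpha\}$. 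Inserting the decomposition into $E_{\mu_k}(u_k)$, passing first to the limit $k\to\infty$ (the $\mu_k$-gradient term drops out since $\mu_k\to 0$ while the gradient norms stay bounded) and then to $\ell\to\infty$ yields $m_0\ge \sum_{j\ge 1} I_0(\alpha_j)$. A standard scaling $v\mapsto \lambda^{d/2}v(\lambda\cdot)$ together with $m_0=I_0(1)<0$ (Corollary~\ref{col1}) produces the strict subadditivity $I_0(\alpha+\beta)<I_0(\alpha)+I_0(\beta)$ for $\alpha,\beta>0$ with $\alpha+\beta\le 1$; combined with $\sum_j\alpha_j\le 1$ this forces exactly one index $j_0$ with $\alpha_{j_0}=1$ and all other profiles trivial, and $\|r_k^\ell\|_{H^2}\to 0$ in the double limit.

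Setting $u_0:=U^{j_0}$ and $\tilde u_k:=u_k(\cdot+x_k^{j_0})$, which is still a minimizing sequence for $m_{\mu_k}$ by translation invariance, the Pythagorean identities with $\alpha_{j_0}=1$ upgrade $\tilde u_k\rightharpoonup u_0$ in $H^2$ to strong convergence in $H^2$, and passing to the limit in $E_{\mu_k}(\tilde u_k)=m_{\mu_k}$ gives $u_0\in B_1$ with $E_0(u_0)=m_0$, identifying $u_0$ as a minimizer of $m_0$. The hard part is the strict-subadditivity step: one must simultaneously rule out mass splitting across several profiles and track the small but nonvanishing $\mu_k$-Laplacian contribution uniformly as $k\to\infty$; the $L^2$-subcritical assumption $p<1+\tfrac{8}{d}$ and the sign $m_0<0$ are essential on both counts.
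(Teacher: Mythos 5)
Your proposal is correct and follows the same overall skeleton as the paper's proof: continuity of $\mu\mapsto m_\mu$ to see that $m_{\mu_k}\to m_0<0$, an a priori $H^2$ bound from the Gagliardo--Nirenberg inequality and the subcriticality $\frac{(p-1)d}{4}<2$, and then the profile decomposition of Lemma~\ref{decomposition} to upgrade weak to strong convergence. Where you diverge is in how the splitting of mass among several profiles is excluded. The paper simply observes that $\{u_k\}$ is a minimizing sequence for $m_0$, verifies the non-vanishing bound $\int|u_{k_j}|^{p+1}\ge C_0>0$ exactly as in \eqref{001} and \eqref{3.00nv} once $\mu_{k_j}$ is close to $0$, and then invokes the argument of Proposition~\ref{M}, whose key step \eqref{3.13}--\eqref{3.17} is an amplitude rescaling $V^j\mapsto V^j/\|V^j\|_2$ forcing $\|V^{j_0}\|_2\ge 1$. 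You instead package the same mechanism as the classical Lions strict subadditivity $I_0(\alpha+\beta)<I_0(\alpha)+I_0(\beta)$ for the mass-constrained infima $I_0(\alpha)$; the two are close cousins (the paper's identity \eqref{3.14} is precisely the amplitude-scaling comparison underlying $I_0(\theta\alpha)<\theta I_0(\alpha)$), but your formulation is more self-contained and makes the role of $m_0<0$ explicit, at the cost of having to prove $I_0(\alpha)<0$ for all $\alpha\in(0,1]$ and the monotonicity of $I_0$. One small correction: the mass-preserving dilation $v\mapsto\lambda^{d/2}v(\lambda\cdot)$ that you cite cannot by itself yield subadditivity, since it does not change $\|v\|_{L^2}$; it gives the negativity $I_0(\alpha)<0$ (as in Lemma~\ref{lm3.4}), and the strict subadditivity then comes from the mass-changing scaling $v\mapsto\sqrt{\theta}\,v$. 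With that fixed, your argument goes through and also handles the $\mu_k$-term directly inside the decomposition rather than reducing to the $\mu=0$ problem first.
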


Concerning the gap in Theorem \ref{th1} (2): $1+\frac{4}{d}\leq p<1+\frac{8}{d}$ and 
$\mu \geq \mu_0>0$, we obtain the following theorem in Section \ref{s:Q-mu_subcri}.
\begin{theorem}\label{th1.1} Let  $1+\frac{4}{d}\leq p<1+\frac{8}{d}$ and $\mu_0$ be defined as in (\ref{1.7.1}), 
then for 
any $\mu \in (\mu_0, +\infty)$, $\mathcal{M}_{\mu} = \emptyset$, namely, $m_{\mu}$
has no  minimizers for (\ref{VP}).
\end{theorem}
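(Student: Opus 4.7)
The plan is to combine the monotonicity and continuity of $\mu \mapsto m_\mu$ with a scaling upper bound in order to first pin down the value $m_\mu=0$ on $[\mu_0,+\infty)$, and then to exploit the strict linearity of $\mu \mapsto E_\mu(u)$ to obstruct the existence of any minimizer. The first step is to show that $m_\mu = 0$ for every $\mu \geq \mu_0$. The definition \eqref{1.7.1} together with the continuity and non-decreasing monotonicity of $m_\mu$ from Lemma \ref{lm3.0} yield $m_{\mu_0} = 0$ and hence $m_\mu \geq 0$ for $\mu \geq \mu_0$. Conversely, plugging the $L^2$-preserving rescaling $u_\lambda(x):=\lambda^{d/2} u(\lambda x)$, for a fixed $u\in B_1$, into \eqref{E:4NLS-mu} gives
\begin{equation*}
E_\mu(u_\lambda) = \frac{\lambda^4}{2}\|\Delta u\|_{L^2}^2 + \frac{\mu \lambda^2}{2}\|\nabla u\|_{L^2}^2 - \frac{\lambda^{d(p-1)/2}}{p+1}\|u\|_{L^{p+1}}^{p+1},
\end{equation*}
and, since $d(p-1)/2 < 4$ throughout the range $p<1+8/d$, sending $\lambda \to 0^+$ yields $E_\mu(u_\lambda) \to 0$. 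Hence $m_\mu \leq 0$, and combined with the lower bound one gets $m_\mu = 0$ on $[\mu_0,+\infty)$.

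Next I would argue by contradiction: suppose that for some $\mu > \mu_0$ there exists $u \in \mathcal{M}_\mu$, so $E_\mu(u) = m_\mu = 0$. Since $u\in B_1$ forces $u\not\equiv 0$, and the only constant function in $L^2(\mathbb{R}^d)$ is zero, one has $\|\nabla u\|_{L^2} > 0$. Fixing any $\mu' \in (\mu_0, \mu)$, the linear dependence of $E_\mu$ on $\mu$ in \eqref{E:4NLS-mu} yields
\begin{equation*}
E_{\mu'}(u) = E_\mu(u) - \frac{\mu - \mu'}{2}\|\nabla u\|_{L^2}^2 = -\frac{\mu - \mu'}{2}\|\nabla u\|_{L^2}^2 < 0,
\end{equation*}
whence $m_{\mu'} \leq E_{\mu'}(u) < 0$, contradicting $m_{\mu'} = 0$ from the first step. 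Therefore $\mathcal{M}_\mu = \emptyset$.

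I do not expect a serious obstacle. The scaling upper bound and the continuity/monotonicity of $\mu \mapsto m_\mu$ are already provided by the preceding lemmas and the definition of $\mu_0$; the real mechanism is simply that $\mu \mapsto E_\mu(u)$ is strictly increasing in $\mu$ with slope $\tfrac12\|\nabla u\|_{L^2}^2>0$, so $m_\mu$ cannot remain flat at zero on an interval while being attained at an interior point on the right. If anything, the only small subtlety is in verifying the scaling step for the borderline exponent $p=1+4/d$, where $d(p-1)/2=2$ coincides with the exponent in the gradient term, but one checks that the limit $\lim_{\lambda\to 0^+}E_\mu(u_\lambda)=0$ still holds regardless of the sign of the $\lambda^2$-coefficient, so the conclusion $m_\mu\le 0$ persists uniformly across the range.
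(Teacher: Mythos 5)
Your proposal is correct and follows essentially the same route as the paper: both rest on the fact that $m_\mu=0$ on $[\mu_0,+\infty)$ (the paper cites Lemma \ref{lm3.2}(2), which you re-derive from the definition of $\mu_0$, monotonicity/continuity, and the scaling bound $m_\mu\le 0$) and then exploit the strict linear dependence $E_{\mu}(u)-E_{\mu'}(u)=\tfrac{\mu-\mu'}{2}\|\nabla u\|_2^2>0$ to reach a contradiction, the only cosmetic difference being that you compare with an intermediate $\mu'\in(\mu_0,\mu)$ while the paper compares directly with $\mu_0$. (Minor note: the continuity and monotonicity you invoke are Lemma \ref{lm3.1}, not Lemma \ref{lm3.0}.)
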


We would like to mention that it is still unknown for us whether $\mathcal{M}_{\mu} = \emptyset$ or not in the critical case $\mu=\mu_0$, $p\in [1+\frac{4}{d},1+\frac{8}{d})$ 
and also in the case $\mu\in (-\infty, -\lam_0]$, $p\in(1,1+\frac{8}{d})$. 
{Nevertheless}, 
from Theorem \ref{th1} and Theorem \ref{th1.1} as well as their proofs, one may observe that when $1<p<1+\frac8d$, the term $\frac{\mu}{2}\|\nabla u\|_2^2$ in the 
energy functional  affects the existence of minimizers of $m_{\mu}$ in a subtle  manner. 

In Section \ref{s:L2-critic}, we consider the $L^2$-critical case $p=1+\frac8d$.  Note that in this case, the terms $\|\Delta u\|_2^2$ and $\|u\|_{p+1}^{p+1}$ of the functional $E_{\mu}(u)$  grow at the same rate;  they play competing roles in the analysis, see e.g. \eqref{00001}. It seems difficult to know which term might be more predominant. Hence we turn to study the following minimization problem:  For given
 $\mu\in \mathbb{R}$ and $b>0$, consider
\begin{equation}\label{1.7}
m_{\mu, b} := \inf_{u\in B_1}E_{\mu,b}(u),\quad\tag{VP-b}
\end{equation}
where
\begin{equation}\label{1.8}
E_{\mu,b}(u):=\frac{1}{2}\|\Delta u\|_2^2 + \frac{\mu}{2}\|\nabla u\|_2^2 - \frac{b}{2+\frac{8}{d}}\int |u|^{2+\frac{8}{d}}dx.
\end{equation}
 Denote the set of all minimizers for $m_{\mu,b}$ by
\begin{eqnarray}\label{Mub}
\mathcal{M}_{\mu,b}:=\{u\in B_1 \;\vert \;   u\;\text{ is a  minimizer of (\ref{1.7})} \}.
\end{eqnarray}
We define the orbital stability for $\mathcal{M}_{\mu,b}$ the same way as in 
Definition \ref{def:orb-stabi} per substituting $\mathcal{M}_{\mu,b}$ for $\mathcal{M}_{\mu}$.  

Let $Q^*$ be given in \eqref{1.4}.  Given $\mu\in \R$,  denote
\begin{eqnarray}\label{4.00}
b^*:=\|Q^*\|_2^{\frac{8}{d}},\quad  b_*:=b^*\Big[1+ \dfrac{\|Q^*\|_2^2}{4\|\Delta Q^*\|_2^2}
\left(\mu^2+\dfrac{4\|\nabla Q^* \|_2^2}{\|Q^*\|_2^2}\mu\right)\Big].
\end{eqnarray}
 In Section 4,  we obtain the following theorem for the set of ground states for (\ref{1.7}). 
\begin{theorem}\label{th2}  Let $p=1+\frac8d$. Suppose 
 $\mu\in (-\dfrac{4\|\nabla Q^*\|^2_2}{\|Q^*\|^2_2}, 0)$ and $b \in (b_*,b^*)$. 
Then  the set $\mathcal{M}_{\mu,b}\neq \emptyset$ and is orbitally stable.
\end{theorem}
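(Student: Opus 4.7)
The plan is to carry out the concentration-compactness strategy used for Theorem \ref{th1}, now adapted to the $L^2$-critical functional $E_{\mu,b}$ in (\ref{1.8}). The three key ingredients are the sharp Gagliardo-Nirenberg inequality (Lemma \ref{prop2.4}) at the critical exponent $p+1=2+8/d$, the $H^2$ profile decomposition (Lemma \ref{decomposition}), and a test-function calculation based on rescalings of $Q^*$. I first show $-\iy<m_{\mu,b}<0$: for $u\in B_1$, sharp GN gives $E_{\mu,b}(u)\ge \tfrac12(1-b/b^*)\|\Delta u\|_2^2+\tfrac\mu 2\|\nabla u\|_2^2$, and since $b<b^*$ the first term is coercive, while the negative $\mu$-term is absorbed via the interpolation $\|\nabla u\|_2^2\le \|\Delta u\|_2\|u\|_2=\|\Delta u\|_2$ combined with Young's inequality. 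Hence $m_{\mu,b}>-\iy$ and any minimizing sequence is bounded in $H^2$. For negativity, test with $u_\lambda(x)=\lambda^{d/2}Q^*(\lambda x)/\|Q^*\|_2\in B_1$; the GN identity at $Q^*$ turns $E_{\mu,b}(u_\lambda)$ into $A\lambda^4+B\lambda^2$ with $A=\tfrac12(1-b/b^*)\|\Delta Q^*\|_2^2/\|Q^*\|_2^2>0$ and $B=\mu\|\nabla Q^*\|_2^2/(2\|Q^*\|_2^2)<0$, so $m_{\mu,b}\le -B^2/(4A)<0$.

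Next, apply Lemma \ref{decomposition} to a minimizing sequence $\{u_n\}\subset B_1$ to obtain $u_n(x)=\sum_{j=1}^J V^j(x-x_n^j)+r_n^J$ with pairwise diverging translations and $\lim_{J\to\iy}\limsup_n\|r_n^J\|_{2+8/d}=0$. The Pythagorean identities yield $\sum_{j=1}^J \|V^j\|_2^2+\limsup_n \|r_n^J\|_2^2=1$ (with analogues for $\|\nabla\cdot\|_2^2$ and $\|\Delta\cdot\|_2^2$), while the critical nonlinearity decouples as $\int|u_n|^{2+8/d}\,dx=\sum_j\int|V^j|^{2+8/d}\,dx+o(1)$. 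Setting $\theta_j:=\|V^j\|_2^2$ and using the scaling identity $\inf\{E_{\mu,b}(v):\|v\|_2^2=\theta\}=\theta\, m_{\mu,b\theta^{4/d}}$, I arrive at
\[m_{\mu,b}=\lim_n E_{\mu,b}(u_n)\ge \sum_{j=1}^J \theta_j\, m_{\mu,b\theta_j^{4/d}}+\liminf_n E_{\mu,b}(r_n^J)+o(1),\]
where Step 1's coercivity makes the remainder contribution nonnegative as $J\to\iy$. Ruling out multiple nontrivial profiles reduces to verifying the strict inequality $m_{\mu,b}<\sum_j\theta_j\, m_{\mu,b\theta_j^{4/d}}$ whenever $\{\theta_j\}$ splits the unit mass nontrivially. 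The hypotheses $b\in(b_*,b^*)$ and $\mu\in(-4\|\nabla Q^*\|_2^2/\|Q^*\|_2^2,0)$ are tailored so that the test-function upper bound for $m_{\mu,b}$ strictly beats the corresponding bounds for $m_{\mu,b\theta^{4/d}}$ at $\theta\in(0,1)$, which is precisely the information encoded in formula (\ref{4.00}) for $b_*$. Thus $J=1$, $\theta_1=1$, and $r_n^1\to 0$ in $L^2$; the energy identities then upgrade this to $u_n(\cdot+x_n^1)\to V^1$ strongly in $H^2$, yielding a minimizer $V^1\in\mathcal{M}_{\mu,b}$.

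With $\mathcal{M}_{\mu,b}\ne\emptyset$ and sequential compactness (modulo translations) of minimizing sequences in place, orbital stability follows from the standard Cazenave-Lions argument as in Theorem \ref{th1}: failure of stability would produce a sequence $\{\psi_n(t_n,\cdot)\}\subset B_1$ of BNLS evolutions with $E_{\mu,b}(\psi_n(t_n,\cdot))\to m_{\mu,b}$ drifting away from $\mathcal{M}_{\mu,b}$, contradicting compactness. The main obstacle is the strict sub-additivity just highlighted: in the $L^2$-critical regime the natural scaling gives only equality, so the conclusion has to be enforced quantitatively. The explicit threshold $b_*$ in (\ref{4.00}) is exactly the value below which this strict sub-additivity fails, and the $\mu$-window keeps $b_*<b^*$ so that the admissible interval is nonempty; verifying these requires a careful analysis of a quadratic form in $\mu$ arising from the Pohozaev identity for (\ref{1.4}).
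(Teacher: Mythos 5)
Your overall skeleton (coercivity of $E_{\mu,b}$ for $b<b^*$, negativity of $m_{\mu,b}$ via rescalings of $Q^*$, profile decomposition, Cazenave--Lions contradiction) matches the paper, but the step that actually carries the proof is left unverified, and the role of $b_*$ is misidentified. You reduce compactness to the strict sub-additivity $m_{\mu,b}<\sum_j\theta_j\,m_{\mu,b\theta_j^{4/d}}$ for a nontrivial mass splitting and then assert that the hypotheses are ``tailored'' so that test-function upper bounds for $m_{\mu,b}$ ``strictly beat'' those for $m_{\mu,b\theta^{4/d}}$. This does not close the logic: comparing an upper bound for $m_{\mu,b}$ with an upper bound for each $m_{\mu,b\theta_j^{4/d}}$ cannot yield the strict inequality you need --- for that you would need matching \emph{lower} bounds on the pieces, and the weak inequality $\sum_j\theta_j m_{\mu,b\theta_j^{4/d}}\ge m_{\mu,b}$ (which follows from monotonicity in $b$) is all your argument delivers. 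Moreover, $b_*$ in \eqref{4.00} is not a sub-additivity threshold. In the paper it is exactly the value of $b$ above which $E_{\mu,b}(Q^*/\|Q^*\|_2)<-\mu^2/8$, hence $m_{\mu,b}<-\mu^2/8$ (Lemma \ref{lm4.6}), and this condition is used for a different purpose: to prove that minimizing sequences are \emph{non-vanishing} in $L^{2+8/d}$ (Lemma \ref{lm4.5}; if $\|u_n\|_{2+8/d}\to0$ then $2m_{\mu,b}\ge\inf_y(y^2+\mu y)=-\mu^2/4$, a contradiction). Your proposal never addresses non-vanishing at all, yet without it the profile decomposition could produce only trivial profiles; this is precisely the obstacle the paper flags for the critical case, since $m_{0,b}=0$ kills the subcritical argument for \eqref{001}.

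The paper's route then avoids sub-additivity entirely: as in Proposition \ref{M}, each profile $V^j$ is rescaled to unit mass, the identity \eqref{3.14} together with $E_{\mu,b}(V^j_{\rho_j})\ge m_{\mu,b}$, the mass Pythagorean identity \eqref{2.7}, and the lower bound $\sum_j\|V^j\|_{2+8/d}^{2+8/d}\ge C_0>0$ force $\frac{1}{\|V^{j_0}\|_2^{8/d}}-1\le0$ for some $j_0$, hence a single full-mass profile; strong $H^2$ convergence then follows from \eqref{E-mu-r_n}. Two smaller inaccuracies in your write-up: the remainder contribution $E_{\mu,b}(r_n^J)$ is not nonnegative when $\mu<0$ (the paper bounds it below by $\|r_n^J\|_2^2 m_{\mu,b}$ and absorbs it into the mass bookkeeping), and for the stability conclusion you also need the a priori $H^2$ bound \eqref{4.7} guaranteeing global existence of the perturbed solutions. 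To repair your proof you should either carry out the strict sub-additivity estimate in full, or, more in line with the paper, prove the non-vanishing property from $m_{\mu,b}<-\mu^2/8$ (which is where $b>b_*$ enters) and then run the rescaled-profile argument of Proposition \ref{M}, Case (ii).
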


The  existence of $\mathcal{M}_{\mu,b}$ is proved in Proposition \ref{prop4.1} and consequently the orbital stability follows the same way as Theorem \ref{th1}.
In view of  Lemma \ref{lm4.6} $(i)$,   $0< b_*<b^*$ if $\mu\in (-\dfrac{4\|\nabla Q^*\|_2^2}{\|Q^*\|_2^2}, 0)$.
Note that when $\mu<0$ and $b\geq b^*$,   Lemma \ref{lm4.1} shows  $m_{\mu,b}=-\infty$, meaning that  \eqref{1.7} is unsolvable. 
Also Lemma \ref{lm4.0} says when $\mu>0$,  we have $\mathcal{M}_{\mu,b}= \emptyset$ for all $b>0$. 
The case $\mu=0$ admits ground state solutions if and only if $b=b^*$ exactly. 
 
As a second main theorem, we  gives an alternative formulation of  Theorem \ref{th2} on $\mu<0$.
Define $\lam_1:=\dfrac{4\|\nabla Q^*\|^2_2}{\|Q^*\|^2_2}$ and\\
 \begin{align*}
 \beta:= \Big[1+ \dfrac{\|Q^*\|_2^2}{4\|\Delta Q^*\|_2^2}
\left(\mu^2+\dfrac{4\|\nabla Q^* \|_2^2}{\|Q^*\|_2^2}\mu\right)\Big]^{\frac{d}{8}}.  
\end{align*}
Then $\beta\in (0,1)$ if $\mu\in  (-\lam_1,0)$.
\begin{theorem}\label{th22} Let $p=1+\frac8d$. 
For given $\mu\in (-\lam_1, 0)$, if $\beta \norm{Q^*}_2<c< \|Q^*\|_2$, then the minimization problem 
\begin{equation}\label{MinP}
m_{\mu,1}^c:=\inf\limits_{u\in B_c} E_{\mu,1}(u) 
\end{equation}
has  ground state solutions, 
where $E_{\mu,1}(u)=E_{\mu}(u)$ is defined as  in (\ref{E:4NLS-mu})
and  the level set $B_c:=\{u\in H^2\ |\ \int |u|^2 dx = c^2\}$. 
 In addition, if we denote by
\begin{equation}\label{e:minPc}
    \mathcal{M}_{\mu}^c:=\{u\in B_c \, |\ u\;\text{ is a  minimizer of (\ref{MinP})}  \}
\end{equation}
 the set of  ground state solutions in $B_c$, then $\mathcal{M}_{\mu}^c\neq \emptyset$ and is orbitally stable under the flow of (\ref{BNLS})-(\ref{BNLS1}).
\end{theorem}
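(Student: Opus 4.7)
The plan is to reduce Theorem \ref{th22} to Theorem \ref{th2} by the $L^2$-normalized rescaling $u\mapsto cu$. For $u\in B_1$ set $v:=cu$; then $v\in B_c$, and since $p+1=2+\frac{8}{d}$ a direct computation gives
\begin{equation*}
E_{\mu,1}(v) \;=\; c^2\Bigl[\tfrac12\|\Delta u\|_2^2 + \tfrac{\mu}{2}\|\nabla u\|_2^2 - \tfrac{c^{8/d}}{2+8/d}\|u\|_{2+8/d}^{2+8/d}\Bigr] \;=\; c^2\, E_{\mu,\,c^{8/d}}(u).
\end{equation*}
Consequently $m_{\mu,1}^c = c^2 m_{\mu,b}$ with $b:=c^{8/d}$, and the map $u\mapsto cu$ is a bijection between $\mathcal{M}_{\mu,b}$ and $\mathcal{M}_\mu^c$. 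Using $b^*=\|Q^*\|_2^{8/d}$ and $b_*=b^*\beta^{8/d}$, the hypothesis $\beta\|Q^*\|_2<c<\|Q^*\|_2$ is equivalent to $b_*<b<b^*$, so Theorem \ref{th2} applies and yields $\mathcal{M}_\mu^c\neq\emptyset$.

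The same rescaling also transfers relative compactness: for any minimizing sequence $\{v_n\}\subset B_c$ for $m_{\mu,1}^c$, the sequence $u_n:=v_n/c\in B_1$ is a minimizing sequence for $m_{\mu,b}$. The compactness step behind Proposition \ref{prop4.1} (via the profile decomposition of Lemma \ref{decomposition} together with the sharp Gagliardo--Nirenberg inequality of Lemma \ref{prop2.4}) then yields translations $y_n\in\R^d$ and $u_*\in\mathcal{M}_{\mu,b}$ with $u_n(\cdot-y_n)\to u_*$ in $H^2$, whence $v_n(\cdot-y_n)\to cu_* \in \mathcal{M}_\mu^c$. Thus every minimizing sequence for $m_{\mu,1}^c$ is $H^2$-relatively compact up to translations, with limit points in $\mathcal{M}_\mu^c$.

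Orbital stability under the BNLS flow then follows by the standard Cazenave--Lions contradiction argument. Global well-posedness for $\psi_0\in B_c$ is ensured by mass and energy conservation: the sharp Gagliardo--Nirenberg inequality together with $c<\|Q^*\|_2$ controls the critical nonlinear term, while the negative contribution $\tfrac{\mu}{2}\|\nabla\psi\|_2^2$ is absorbed via the interpolation $\|\nabla\psi\|_2^2\le\|\Delta\psi\|_2\|\psi\|_2$. If orbital stability failed, there would exist $\psi_0^n\to u_0\in\mathcal{M}_\mu^c$ in $H^2$ and times $t_n$ with $\inf_{w\in\mathcal{M}_\mu^c}\|\psi^n(t_n)-w\|_{H^2}\ge\varepsilon_0$; by conservation $\|\psi^n(t_n)\|_2\to c$ and $E_{\mu,1}(\psi^n(t_n))\to m_{\mu,1}^c$, so after a small renormalization $\widetilde v_n:=(c/\|\psi^n(t_n)\|_2)\psi^n(t_n)\in B_c$ we obtain a minimizing sequence, which by the previous step converges in $H^2$ up to translations to an element of $\mathcal{M}_\mu^c$, contradicting $\varepsilon_0$.

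The main obstacle is the $L^2$-critical compactness analysis: because $\|\Delta u\|_2^2$ and $\|u\|_{2+8/d}^{2+8/d}$ scale identically, preventing loss of mass in the profile decomposition is delicate, and the negative $\tfrac{\mu}{2}\|\nabla u\|_2^2$ term adds further competition. The admissible window $\beta\|Q^*\|_2<c<\|Q^*\|_2$ is precisely what is needed to guarantee $-\infty<m_{\mu,1}^c<0$, ruling out both the vanishing alternative and the unboundedness-from-below alternative. Fortunately, this sharp analysis is already encoded in the definitions of $\lambda_1$ and $\beta$ and is carried out in Lemmas \ref{lm4.1}--\ref{lm4.6} and Proposition \ref{prop4.1}; once Theorem \ref{th2} is in hand, the rescaling of the first paragraph transports the compactness to $B_c$ and the rest is routine.
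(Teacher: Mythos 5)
Your proposal is correct and matches the paper's approach: the paper's own proof of Theorem \ref{th22} consists of the single remark that it is a ``straightforward technical translation verbatim'' of the proof of Theorem \ref{th2}, and your rescaling $v=cu$, $b=c^{8/d}$ (with the check that $\beta\|Q^*\|_2<c<\|Q^*\|_2$ is equivalent to $b_*<b<b^*$) is precisely the dictionary that makes this translation explicit. Your additional care in running the Cazenave--Lions contradiction argument directly for $\mathcal{M}_\mu^c$ under the flow of (\ref{BNLS}), using conservation of $E_{\mu,1}$ and the transferred compactness, is a correct and slightly more careful rendering of what the paper leaves implicit.
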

Theorem \ref{th22} can be proved nearly verbatim following the same proof for Theorem \ref{th2},
which supplements the results  in Theorem 1.2 and Theorem 1.4 in \cite{BCGJ} on the case $\mu>0$. 
 Theorem \ref{th1} and Theorem \ref{th22}
  show that the sign of the second-order dispersion has crucial effect on the construction  of orbitally stable standing waves for the BNLS 
  especially when $\mu$ is negative.  This is the case where $\De^2$ and $-\mu\De$  have played opposite roles for the dispersion of the energy that arises in physics
   \cite{BaruchFibichMandelbaum2010,BL2015,Karpman1996,Karpman2000}. 
    Notably, in the mass critical case $p=1+\frac8d$,  we find that when $-\lam_1<\mu<0$, the term $-\mu\De$ contributes to the  
     existence of orbitally stable ground states for (\ref{BNLS}),  
      while in the case $\mu>0$ there exists no ground states. 
      Note that the result on (\ref{BNLS}) for $\mu=0$ corresponds to the classical second order NLS, both accounting for the $L^2$-critical regimes, 
      cf.  \cite{Fibich2002,Weinstein1983,zhuzhangyang2010}. 
      
 We would like to mention that  our proofs of the main theorems 
   give a simple systematic method to show the existence of g.s.s. 
    that include non-radial solutions for  (\ref{BNLS}) based on  the
 profile decomposition  analysis. 
 From Theorem \ref{th1} and Theorem \ref{th1.1} we see the upper bound $\mu_0$ is sharp for $\mu>0$ and $p\in [1+\frac4d,1+\frac{8}{d})$.
 Moreover, the variational argument allows to determine a lower bound of  $\mu<0$ for $p\in (1, 1+\frac8d]$ 
 regarding  the existence of ground states for (\ref{VP}) and (\ref{1.7}). 
 From the proofs we conjecture that the lower bounds $\lam_0$ and $\lam_1$ in Theorem \ref{th1} and Theorem \ref{th22} are optimal,
which are intrinsically dependent on the ground state of (\ref{1.3}) with $\mu=0$. 
 However, the uniqueness and symmetry problem seems to remain unsettled other than knowing $\forall \theta, y\in\R$,
 $e^{i\theta}Q(\cdot-y)\in \mathcal{M}_\mu$ for all $Q\in \mathcal{M}_\mu$. 
 In this respect,  the papers \cite{BCMN,BCGJ} use the classical concentration-compactness method to study the existence of radially symmetric g.s.s.,
  however, the argument does not seem to directly apply to the focusing case $\mu<0$.  

For $p>1+\frac4d$, orbital stability of the  classical second-order NLS were considered in  \cite{Weinstein1986},
and later the result on NLS were significantly extended in \cite{GrillakisShatahStrauss1987}   for general Hamiltonian systems that are
invariant under a group of transformations. 
The analogous results  for (\ref{BNLS})   in the $L^2$-supercritical case 
  were studied in  \cite{BCMN,BCGJ,NataliPastor2015} via Lyapunov functional method for $\mu\ge 0$ and $p>1+\frac8d$.    
The profile decomposition method  has potential applications to the study of such problem in the case  $\mu<0$ and $p>1+\frac8d$.   
The analysis in this paper can be further extended to address  the orbital stability for higher-order Schr\"{o}dinger type equations with potentials, based on 
the analogues for the NLS  \cite{FukuizumiOhta2003,Sulem1999,zhang2000,ZhangZhengZhu2019}  and related  dynamical properties for  general  Hamiltonian partial differential equations near the standing waves  \cite{Lin2017,zhangzhu2017,zhu2016}.

The remaining of the paper is organized as follows. In Section 2, we mainly state  the local well-posedness of (\ref{BNLS})-(\ref{BNLS1}), 
 the profile decomposition  in $H^2$ and a sharp Gagliardo-Nirenberg inequality for $\De^2$. 
In Section 3 and Section 4, we shall prove Theorem \ref{th1} and Theorem \ref{th2} respectively concerning the construction of ground states for (\ref{BNLS}). 

\renewcommand{\theequation}{\thesection.\arabic{equation}}
 \setcounter{equation}{0}
 \section{Preliminaries}\label{s:prelimin}

Throughout this paper,   we write $\int h(x)dx$ to represent the Lebesgue integral of $h$ over $\R^d$.
Let $L^p:= L^p (\R^d )$, $1\leq p\leq+\infty$ be the usual Lebesgue space equipped with the standard norm
$\|\cdot\|_{p}$. Let  $H^2(\R^d)$ denote the Sobolev space equipped with
the  norm  $\| v\|_{H^2}:=\left(\|\Delta v\|_2^2+\|v\|_2^2\right)^{1/2}$.
Let $C$ denote a positive constant that may vary from one context to another. 
For  Cauchy problem (\ref{BNLS})-(\ref{BNLS1}),  
there holds  the local well-posedness which was obtained in  \cite{BenKochSaut2000,KenigPonceVega1991, Pausader2007DPDE}. 
\begin{proposition}\label{local} Let  $\psi_{0}\in H^2$ and $1<p<\frac{2d}{(d-4)^+}-1$.
 There exists a unique solution $\psi(t,x)$
of  Cauchy problem (\ref{BNLS})-(\ref{BNLS1}) on the maximal time
interval $[0,T)$ such that $\psi\in C([0,T);H^{2})$.
There holds the blowup alternative, namely, either
$T=+\infty$ (global existence),
{or}  $0<T<+\infty$ and
$\lim\limits_{t\to T} \| \psi(t,\cdot)\|_{H^2} =+\infty$ (blow-up).
Furthermore, for all $t\in [0,T)$, $\psi$ satisfies the
following conservation laws:
\begin{itemize}
  \item [(i)] Conservation of mass: $M(\psi)=\|\psi(t,\cdot)\|^2_2=\|\psi_0\|^2_2\,$
  \item [(ii)] Conservation  of energy: 
\[E_{\mu}(\psi)= \frac 12 \int|\Delta  \psi|^{2}dx+\frac{\mu}{2} \int|\nabla \psi|^{2}dx
 -\frac {1}{p+1} \int |\psi|^{p+1}dx=E_{\mu}(\psi_{0}).\]
\end{itemize}
\end{proposition}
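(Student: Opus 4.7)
The plan is to recast \eqref{BNLS}--\eqref{BNLS1} in Duhamel form and run a fixed-point argument in a Strichartz space built over the linear propagator $S(t):=e^{-it(\Delta^{2}-\mu\Delta)}$. Concretely, one seeks a fixed point of
\[
\Phi(\psi)(t):=S(t)\psi_0+i\int_0^t S(t-s)\,|\psi(s)|^{p-1}\psi(s)\,ds
\]
in the complete metric space $X_T:=C([0,T];H^2)\cap L^q([0,T];W^{2,r})$, where $(q,r)$ is a Strichartz-admissible pair for the fourth-order group $S(t)$. The required linear bounds are the homogeneous and inhomogeneous Strichartz estimates proven by Ben-Artzi--Koch--Saut and Kenig--Ponce--Vega; since the full symbol $|\xi|^4+\mu|\xi|^2$ dominates $|\xi|^4$ at high frequency, $-\mu\Delta$ acts as a lower-order perturbation of the free biharmonic evolution, and the admissible pair range is unchanged.

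The nonlinear ingredient is that when $1<p<\tfrac{2d}{(d-4)^+}-1$ the Sobolev embedding $H^2\hookrightarrow L^{p+1}$ holds, and combined with a fractional Leibniz/chain rule one bounds the dual-Strichartz norm of $|\psi|^{p-1}\psi$ by a power of $\|\psi\|_{X_T}$. This yields $\|\Phi(\psi)\|_{X_T}\le C\|\psi_0\|_{H^2}+C\,T^{\theta}\|\psi\|_{X_T}^{p}$ with some $\theta>0$, so for $T=T(\|\psi_0\|_{H^2})$ sufficiently small $\Phi$ contracts a closed ball of $X_T$. Applying the same estimate to $\Phi(\psi)-\Phi(\tilde\psi)$, together with the pointwise inequality $\bigl||u|^{p-1}u-|v|^{p-1}v\bigr|\le C(|u|^{p-1}+|v|^{p-1})|u-v|$, gives uniqueness and Lipschitz continuous dependence on the initial datum. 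The blowup alternative then follows from the standard continuation argument: were the maximal time $T<\infty$ while $\limsup_{t\to T^-}\|\psi(t)\|_{H^2}<\infty$, one could restart the local theory at a time close to $T$ and extend $\psi$ beyond $T$, contradicting maximality.

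For the conservation laws the cleanest route is to prove them first for smooth data $\psi_0\in H^s$ with $s$ large, where the solution is classical and the formal manipulations are justified. Pairing \eqref{BNLS} with $\bar\psi$ in $L^2$ and taking imaginary parts gives $\tfrac{d}{dt}\|\psi\|_2^2=0$; pairing with $\bar\psi_t$ and taking real parts -- after integrating by parts twice to rewrite $\langle\Delta^2\psi,\bar\psi_t\rangle$ as $\tfrac12\partial_t\|\Delta\psi\|_2^2$, and analogously for the $-\mu\Delta$ and nonlinear terms -- yields $\tfrac{d}{dt}E_\mu(\psi)=0$. Both identities extend to arbitrary $\psi_0\in H^2$ by approximation: one takes $\psi_0^{(n)}\to\psi_0$ in $H^2$ with $\psi_0^{(n)}$ smooth, applies the two conservation laws to the corresponding classical solutions $\psi^{(n)}$, and passes to the limit using the continuous dependence established above.

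The main technical obstacle is the contraction estimate when $p$ is non-integer and close to $1$: the map $z\mapsto|z|^{p-1}z$ is only H\"older of exponent $p$, so placing two derivatives on it and then keeping track of its Lipschitz behaviour require a fractional Leibniz / Moser-type argument. The standard remedy, going back to Kato, is to close the contraction in a norm weaker than $H^2$ on a ball which is \emph{a priori} bounded in the stronger $X_T$-norm, so that at the uniqueness step only one or zero derivatives must be placed on the nonlinearity; this is the device by which \cite{BenKochSaut2000,KenigPonceVega1991,Pausader2007DPDE} cover the full subcritical range $p<\tfrac{2d}{(d-4)^+}-1$.
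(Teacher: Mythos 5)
The paper does not actually prove this proposition; it is quoted as a known result from \cite{BenKochSaut2000,KenigPonceVega1991,Pausader2007DPDE}, and your outline --- Duhamel's formula plus a contraction mapping in Strichartz spaces adapted to $e^{-it(\Delta^{2}-\mu\Delta)}$, the standard continuation argument for the blowup alternative, and the conservation laws obtained first for regularized data and then by passing to the limit via continuous dependence --- is precisely the route those references take. Your sketch is correct as an outline, including the accurate identification of the limited smoothness of $z\mapsto|z|^{p-1}z$ for small non-integer $p$ as the main technical point and of Kato's weaker-norm contraction as the standard remedy.
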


For $p\in (1, 1+\frac8d]$ we shall use the profile decomposition in $H^2$
as a main tool to show the existence of 
g.s.s. for (\ref{BNLS}), or equivalently for (\ref{VP}) and (\ref{1.7}).
The exponent $p=1+\frac8d$ is mass-critical. Heuristically  
this can be observed from the scaling invariance $u\mapsto u_\lam:= \lam^{\frac{4}{p-1}} u(\lam^4 t, \lam x) $ which preserves the $L^2$-norm if and only if 
$p=1+\frac8d$. The following proposition was obtained  in \cite{zhuzhangyang2010}. 

\begin{proposition}\label{decomposition} 
Let $\{v_n\}_{n=1}^{+\infty}$ be a bounded sequence in $H^2$. Then
there exist a subsequence of $\{v_n\}_{n=1}^{+\infty}$ \textup{(}still denoted
$\{v_n\}_{n=1}^{+\infty}$\textup{)}, a family $\{x_n^j\}_{j=1}^{+\infty}$ of
sequences in $\mathbb{R}^d$ and a sequence $\{V^j\}_{j=1}^{+\infty}$
of $H^2$ functions such that
\begin{itemize}
  \item [(i)] for all $k\neq j$,  $|x_n^k-x_n^j|\rightarrow+\infty \quad {as}\; n\rightarrow +\infty$,
  \item [(ii)] for all $l\geq 1$,
   \be\label{2.5}v_n=\sum\limits_{j=1}^{l}V^j(\cdot-x_n^j)+r_n^l\ee
   with
   \be\label{2.6}\limsup_{n\rightarrow +\infty}\|r_n^l\|_q\rightarrow 0  \ \ {\rm
   as}\ \
  l\rightarrow +\infty,\ee
  for each $q\in(2,\frac{2d}{(d-4)^{+}})$.
\end{itemize}
  Moreover, we have, as $n\rightarrow +\infty$
\begin{align}
\label{2.7}
\| v_n\|_2^2=&\sum\limits_{j=1}^l\| V^j\|_2^2+\|
  r_n^l\|_2^2+o_n(1)\\
 \label{2.08}\| \nabla v_n\|_2^2=&\sum\limits_{j=1}^l\| \nabla V^j\|_2^2+\|
\nabla r_n^l\|_2^2+o_n(1)\\
  \label{2.8}\| \Delta v_n\|_2^2=&\sum\limits_{j=1}^l\| \Delta V^j\|_2^2+\|
 \Delta r_n^l\|_2^2+o_n(1)\\
 \label{2.37}\|\sum_{j=1}^{l}V^{j}(x-x_n^j)\|_{q}^{q}=&\sum_{j=1}^{l}\|V^{j}(x-x_n^j)\|_{q}^{q}+o_n(1),
 \end{align}
 where $o_n(1)\rightarrow 0$ as $n\rightarrow +\infty$.
 \end{proposition}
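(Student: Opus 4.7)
The plan is to adapt the classical profile decomposition strategy of \cite{Gerard1998,HmidiKeraani2005} to the $H^2$ setting. Since the embedding $H^2(\R^d)\hookrightarrow L^q(\R^d)$ is continuous for $q\in[2,\frac{2d}{(d-4)^+}]$ and \emph{locally} compact (by Rellich) for $q<\frac{2d}{(d-4)^+}$, the only obstruction to strong $L^q$-compactness of a bounded sequence in $H^2$ is the action of translations; in particular no scaling parameters are needed. The extraction of profiles is therefore iterative in a single concentration parameter. Define
\[\eta(\{v_n\}):=\sup\left\{\|V\|_{H^2}\colon \exists\,\{x_n\}\subset\R^d,\ v_n(\cdot+x_n)\rightharpoonup V\text{ in }H^2\text{ along a subsequence}\right\}.\]
If $\eta(\{v_n\})=0$ the conclusion holds trivially with all $V^j=0$; otherwise proceed.

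First I would pick $\{x_n^1\}$ and a subsequence so that $v_n(\cdot+x_n^1)\rightharpoonup V^1$ with $\|V^1\|_{H^2}\ge\tfrac12\eta(\{v_n\})>0$, and set $r_n^1(x):=v_n(x)-V^1(x-x_n^1)$. Weak convergence together with translation invariance of $\|\cdot\|_2,\|\nabla\cdot\|_2,\|\De\cdot\|_2$ immediately yields the Pythagorean identities \eqref{2.7}--\eqref{2.8} at level $l=1$. I then iterate, applying the same extraction to $\{r_n^1\}$ to produce $(x_n^2,V^2)$, and so on. The orthogonality $|x_n^k-x_n^j|\to\iy$ for $k\ne j$ follows by contradiction: were $x_n^k-x_n^j\to y\in\R^d$ along a subsequence, computing the weak limit of the tail $r_n^{k-1}(\cdot+x_n^k)$ in two different ways would force $V^k$ to contain the translate $V^j(\cdot+y)$, contradicting the construction of $V^k$ after subtraction. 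Combined with the Pythagorean expansion this gives
\[\sum_{j\ge 1}\|V^j\|_{H^2}^2\le \limsup_{n\to\iy}\|v_n\|_{H^2}^2<\iy,\]
so $\|V^j\|_{H^2}\to 0$ and hence $\eta(\{r_n^l\})\to 0$ as $l\to\iy$.

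The hard part is converting the vanishing of $\eta(\{r_n^l\})$ into the $L^q$ decay \eqref{2.6} for every subcritical $q$. For this I would prove an $H^2$-analogue of Lions' vanishing lemma: if $\{w_n\}$ is bounded in $H^2$ with $\sup_{y\in\R^d}\int_{B(y,1)}|w_n|^2\,dx\to 0$, then $w_n\to 0$ in $L^q$ for every $q\in(2,\frac{2d}{(d-4)^+})$. This is obtained via a covering of $\R^d$ by unit balls combined with the Gagliardo-Nirenberg inequality on a ball, interpolating local $L^2$-smallness against uniform $H^2$-control. Local Rellich compactness then shows that nonvanishing $L^2$-ball concentration for $r_n^l$ would produce a nonzero weak limit after translation, contradicting $\eta(\{r_n^l\})\to 0$; thus the hypotheses of the vanishing lemma are met. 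Finally, the asymptotic decoupling \eqref{2.37} is a direct consequence of $|x_n^k-x_n^j|\to\iy$: by density approximate each $V^j$ by compactly supported functions, expand $|\sum_{j=1}^{l}V^{j}(x-x_n^j)|^q$, and note that for $n$ large the supports of the translates are pairwise disjoint, so all cross-terms vanish in the limit.
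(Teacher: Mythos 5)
The paper does not actually prove this proposition; it imports it verbatim from \cite{zhuzhangyang2010}, whose argument is exactly the Hmidi--Keraani-style iterative extraction you describe: weak limits along translations measured by the functional $\eta$, Pythagorean expansions coming from weak convergence and translation invariance of the $L^2$, $\dot H^1$, $\dot H^2$ norms, pairwise divergence of the translation cores by the standard contradiction, and a Lions-type vanishing lemma in $H^2$ (unit-ball covering plus local Gagliardo--Nirenberg) to convert $\eta(\{r_n^l\})\to 0$ into the $L^q$ decay \eqref{2.6}, with \eqref{2.37} from disjointness of supports after density approximation. Your outline is correct and takes essentially the same route as the cited source.
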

 A primary  advantage of the profile decomposition is  the 
   almost orthogonality that can be used to defeat the lack of compactness of a given bounded sequence, 
 as can be seen from the proof of Theorem \ref{th1} in Section 3. 
In solving the variational problem \eqref{VP}, we also need the following sharp
Gagliardo-Nirenberg type inequality obtained 
in  e.g. 
\cite{Fibich2002} 
and \cite{zhuzhangyang2010}. 
\begin{proposition}\label{prop2.4} Let $1<p<\frac{2d}{(d-4)^+}-1$. Then for all $v\in H^2$
 \be\label{GN2}
 \|v\|_{{p+1}}^{p+1}\le  B_{p,d}
 \| v\|_{2}^{\frac{(4-d)p+4+d}{4}}\|\Delta v\|_{2}^{\frac{(p-1)d}{4}},\ee
where $B_{p,d}=\frac{p+1}{2 \| Q_p\|_{2}^{p-1}}$ and 
$Q_p$ is a ground state solution of  \eqref{1.5}.
\end{proposition}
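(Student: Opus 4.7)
\textbf{Proof proposal for Proposition \ref{prop2.4}.} The inequality \eqref{GN2} is invariant under the two-parameter rescaling $v(x)\mapsto a\,v(\lam x)$ with $a,\lam>0$: the exponents $A:=\frac{(4-d)p+4+d}{4}$ and $B:=\frac{(p-1)d}{4}$ are forced by matching $a$- and $\lam$-homogeneities on both sides, and one checks $A+B=p+1$. Consequently the sharp constant equals $\Kc_{p,d}^{-1}$, where
\be
\Kc_{p,d}:=\inf_{0\neq u\in H^2}J_{p,d}(u),\qquad J_{p,d}(u):=\frac{\|u\|_2^{A}\,\|\De u\|_2^{B}}{\|u\|_{p+1}^{p+1}}.
\ee
The subcritical hypothesis $p<\tfrac{2d}{(d-4)^+}-1$ yields $H^2\hookrightarrow L^{p+1}$, so $\Kc_{p,d}>0$. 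The plan has two parts: (a)~show $\Kc_{p,d}$ is attained by some $Q\in H^2$; (b)~identify $Q$ with $Q_p$ up to symmetries and compute $B_{p,d}$ explicitly.

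For (a), I would take a minimizing sequence $\{u_n\}$ and exploit the two-parameter scale invariance to normalize $\|u_n\|_2=\|\De u_n\|_2=1$, so $\|u_n\|_{p+1}^{p+1}\to \Kc_{p,d}^{-1}=:K$. Proposition \ref{decomposition} then gives $u_n=\sum_{j=1}^l V^j(\cdot-x_n^j)+r_n^l$ with the almost-orthogonality identities \eqref{2.7}--\eqref{2.37}; since $p+1\in(2,\tfrac{2d}{(d-4)^+})$, \eqref{2.6} forces $\limsup_n\|r_n^l\|_{p+1}\to 0$ as $l\to\iy$, and hence $\sum_{j\ge 1}\|V^j\|_{p+1}^{p+1}=K$. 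The definition of $\Kc_{p,d}$ applied profile-wise yields $\|V^j\|_{p+1}^{p+1}\le K\,\|V^j\|_2^{A}\|\De V^j\|_2^{B}$, and summing in $j$ I obtain
\begin{align*}
K \;=\; \sum_{j}\|V^j\|_{p+1}^{p+1}
&\;\le\; K\sum_{j}\|V^j\|_2^{A}\|\De V^j\|_2^{B} \\
&\;\le\; K\Bigl(\sum_{j}\|V^j\|_2^2\Bigr)^{A/2}\Bigl(\sum_{j}\|\De V^j\|_2^2\Bigr)^{B/2}
\;\le\; K,
\end{align*}
where the middle step combines H\"older (with conjugate exponents $\tfrac{A+B}{A}$ and $\tfrac{A+B}{B}$) with the elementary bound $\sum_j t_j^{(A+B)/2}\le (\sum_j t_j)^{(A+B)/2}$ valid for $\frac{A+B}{2}=\frac{p+1}{2}>1$, and the last step uses $\sum_j\|V^j\|_2^2\le 1$, $\sum_j\|\De V^j\|_2^2\le 1$. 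Equality must hold throughout, and the equality case of the superadditivity forces exactly one nonzero profile $V^1=:Q$ with $\|Q\|_2=\|\De Q\|_2=1$. Then \eqref{2.7}--\eqref{2.8} give $r_n^1\to 0$ in $H^2$, so $u_n(\cdot+x_n^1)\to Q$ strongly in $H^2$ and $Q$ achieves $\Kc_{p,d}$.

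For (b), the Euler-Lagrange equation for $Q$ has the form $\al_1\De^2 Q+\al_2 Q=\al_3|Q|^{p-1}Q$ with positive constants $\al_i$ determined by the norms of $Q$, and a further rescaling $Q(x)\mapsto\al\,Q(\be x)$ with suitable $\al,\be>0$ reduces it to \eqref{1.5}; hence, modulo translations and rescaling, $Q$ coincides with $Q_p$. Multiplying \eqref{1.5} by $Q_p$ and integrating gives the Nehari identity $\tfrac{(p-1)d}{8}\|\De Q_p\|_2^2+[1+\tfrac{(p-1)(4-d)}{8}]\|Q_p\|_2^2=\|Q_p\|_{p+1}^{p+1}$, while differentiating the associated action along $\lam\mapsto Q_p(\lam\cdot)$ at $\lam=1$ produces the Pohozaev identity; solving the resulting linear system for $(\|\De Q_p\|_2^2,\|Q_p\|_2^2,\|Q_p\|_{p+1}^{p+1})$ yields $\|\De Q_p\|_2^2=\|Q_p\|_2^2$ and $\|Q_p\|_{p+1}^{p+1}=\tfrac{p+1}{2}\|\De Q_p\|_2^2$. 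Substituting these into $J_{p,d}(Q_p)$ gives $J_{p,d}(Q_p)=\frac{2\|Q_p\|_2^{p-1}}{p+1}$, hence $B_{p,d}=\Kc_{p,d}^{-1}=\frac{p+1}{2\|Q_p\|_2^{p-1}}$.

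The main obstacle is the dichotomy step in (a): a bounded minimizing sequence in $H^2$ can lose compactness through both translation and scaling, so concentration at multiple widely separated centers must be ruled out. The profile decomposition removes the translation defect, the two-parameter normalization $\|u_n\|_2=\|\De u_n\|_2=1$ removes the scaling defect, and the strict convexity of $t\mapsto t^{(p+1)/2}$ for $p>1$ is precisely what forces a single profile to carry all the $L^2$ and $\dot H^2$ mass, so that the extremizer is a single translate of a rescaling of $Q_p$.
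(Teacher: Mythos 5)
Your proposal is correct and follows essentially the same route the paper relies on: the paper cites \cite{Fibich2002,zhuzhangyang2010} and sketches in Remark \ref{rek2.5} exactly this argument --- existence of a $J_{p,d}$-minimizer via the $H^2$ profile decomposition of Proposition \ref{decomposition}, identification of the minimizer with a ground state of \eqref{1.5} through the Euler--Lagrange equation and rescaling, and evaluation of $B_{p,d}$ from the Nehari and Pohozaev identities as in \eqref{B-L2critic}. Your computations (the exponent bookkeeping $A+B=p+1$, the H\"older/superadditivity chain forcing a single profile, and the identities $\|\Delta Q_p\|_2=\|Q_p\|_2$, $\|Q_p\|_{p+1}^{p+1}=\tfrac{p+1}{2}\|\Delta Q_p\|_2^2$) all check out.
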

\begin{remark}\label{rek2.5}
Define the $J$-functional in $H^2$ for $p\in (1, 1+\frac{8}{(d-4)^+})$   
\begin{align}
&J_{p,d}(u)= \frac{\norm{\De u}_2^{\frac{(p-1)d}{4}} \norm{ u}_2^{p+1-\frac{(p-1)d}{4}} }{ \norm{ u}_{p+1}^{p+1}} \, .\label{JpdU}
\end{align}
Then $B_{p,d}\inv =\min_{0\ne u\in H^2} J_{p,d}(u)$.  
  The paper \cite{zhuzhangyang2010} shows that minimizers for $J_{p,d}$ exist, which are exactly
given by the ground state solutions that verifies  \eqref{1.5},  in virtue of the Pohozaev identities in Proposition \ref{pohozaev4NLS} with $\mu=0$.
Moreover, we have 
\begin{align}\label{B-L2critic}
&B_{p,d}\inv = \frac{2}{p+1}\norm{Q_p}_2^{p-1} \,.
\end{align}
It is worth mentioning  that owing to Pohozaev identities and the scaling-invariance of the $J$-functional, 
any  minimizer of the $J$-functional is a solution of \eqref{1.5} up to a scaling $u(x)=\beta Q(\al x )$ 
 and vise versa, where $Q$ is the ground state solution of (\ref{1.5}). 
  Also, any $J$-minimizer is an $E_0$-minimizer modular scaling and vise versa. 
\end{remark}
The Pohozaev type identity has a general version as given in  \cite[Appendix 4.10]{Gou17t}, 
cf.  \cite{BCGJ,BL2015,Fibich2002} for some special cases. 
\begin{proposition}\label{pohozaev4NLS} 
Let $p\in (1, 1+\frac{8}{(d-4)^+}) $.  Let $u\in H^2$ be a weak solution of 
\begin{align}\label{4abc}
\gamma (\De)^2 u -\mu\De u -c |u|^{p-1}u=-\omega u\,, \qquad 
\end{align}
where $\gamma, \mu, c, \omega$ are constant coefficients. Then we have 
\begin{align}
&\gamma \int |\De u|^2+\mu \int |\nabla u|^2+\omega \int |u|^2-c\int |u|^{p+1}=0 \notag \\
&\ga \int |\De u|^2+\frac{\mu}{2} \int |\nabla u|^2-\frac{ c(p-1)d}{4p+4} \int |u|^{p+1}=0. \label{e:Qu}
\end{align}
\end{proposition}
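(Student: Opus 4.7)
The plan is to derive two independent integral identities for any weak solution $u\in H^2$ of \eqref{4abc} and then combine them.

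First, I test \eqref{4abc} against $\bar u$ and integrate over $\R^d$. Since $u\in H^2$ and the exponent $p$ is Sobolev-subcritical, every term is absolutely integrable, and the standard formulas $\int \Delta^2 u\,\bar u\,dx = \|\Delta u\|_2^2$ and $-\int \Delta u\,\bar u\,dx = \|\nabla u\|_2^2$ (obtained after truncation/approximation) yield the first identity in the statement.

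Second, I test \eqref{4abc} against the scaling generator $x\cdot\nabla\bar u$ and take real parts. The four key computations are
\begin{align*}
\operatorname{Re}\!\int \Delta^2 u\,(x\cdot\nabla\bar u)\,dx &= \tfrac{4-d}{2}\|\Delta u\|_2^2,\\
\operatorname{Re}\!\int \Delta u\,(x\cdot\nabla\bar u)\,dx &= \tfrac{d-2}{2}\|\nabla u\|_2^2,\\
\operatorname{Re}\!\int u\,(x\cdot\nabla\bar u)\,dx &= -\tfrac{d}{2}\|u\|_2^2,\\
\operatorname{Re}\!\int |u|^{p-1}u\,(x\cdot\nabla\bar u)\,dx &= -\tfrac{d}{p+1}\|u\|_{p+1}^{p+1}.
\end{align*}
The bottom two follow from $\operatorname{Re}(u\,\partial_j\bar u)=\tfrac12\partial_j|u|^2$ and $\operatorname{Re}(|u|^{p-1}u\,\partial_j\bar u)=\tfrac{1}{p+1}\partial_j|u|^{p+1}$, together with the divergence identity $\int x\cdot\nabla f\,dx = -d\int f\,dx$. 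The top two come from integrating by parts twice and invoking the commutator $\Delta(x\cdot\nabla \bar u) = 2\Delta \bar u + x\cdot\nabla \Delta \bar u$. Assembling them according to \eqref{4abc} and clearing the factor of $1/2$ produces
$$\gamma(4-d)\|\Delta u\|_2^2 + \mu(2-d)\|\nabla u\|_2^2 - \omega d\|u\|_2^2 + \tfrac{2cd}{p+1}\|u\|_{p+1}^{p+1} = 0.$$

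Third, I add $d$ times the first identity to this Pohozaev-type identity. The $\omega$-terms cancel, the $\|\Delta u\|_2^2$ coefficient becomes $4\gamma$, the $\|\nabla u\|_2^2$ coefficient becomes $2\mu$, and $-cd + 2cd/(p+1) = -cd(p-1)/(p+1)$. Dividing by $4$ reproduces \eqref{e:Qu}.

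The main technical obstacle is rigorously justifying the integration by parts against the unbounded weight $x\cdot\nabla\bar u$ when $u$ is only assumed to lie in $H^2$. The standard remedy is to first bootstrap elliptic regularity from \eqref{4abc} to upgrade the smoothness and $L^2$-decay of the derivatives of $u$, then truncate with a smooth cutoff $\chi_R(x)=\chi(x/R)$, carry out the above identities classically, and send $R\to\infty$ so that the boundary contributions vanish by dominated convergence. Rather than redoing these routine but lengthy estimates, I will appeal to \cite[Appendix 4.10]{Gou17t}, where precisely this scheme is implemented for biharmonic equations of the present form.
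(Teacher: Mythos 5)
Your proposal is correct: both identities and the coefficient bookkeeping check out (testing against $\bar u$ and against $x\cdot\nabla\bar u$, then adding $d$ times the first identity to cancel the $\omega$-term indeed yields \eqref{e:Qu}). The paper itself gives no proof of this proposition and simply cites \cite[Appendix 4.10]{Gou17t}, which implements exactly the standard Pohozaev scheme you describe, including the truncation/regularity step you likewise defer to that reference.
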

If $p=1+\frac8d$, $\ga=1$, $c=1$ and $\omega=\frac4d$, then from (\ref{1.4}) we have with $u=Q^*=Q_{1+8/d}$
\begin{align}
& \int |Q^*|^2=\int |\De Q^*|^2=\frac{1}{1+\frac4d} \int |Q^*|^{2+\frac8d}\label{Q-De-p}\\
&E_{0,b}(Q^*)=\frac{1-b}{2+\frac8d} \int |Q^*|^{2+\frac8d}=\frac{1-b}{2} \int |Q^*|^2.\label{E0b:Q} 
\end{align}

To conclude the preliminary section, 
we recall the definition of the orbital stability for (\ref{BNLS}), which is stated in Theorems \ref{th1}, \ref{th2} and \ref{th22}.
\begin{definition}\label{def:orb-stabi}
		The set $\mathcal{M}_{\mu}$ is said to be {\bf orbitally stable} if any given $\varepsilon>0$, there exists $\delta>0$ such that for any initial data $\psi_0$ satisfying
		\[\inf_{u \in \mathcal{M}_{\mu}} \|\psi_0 -u\|_{H^2} <\delta,\]
		the corresponding solution $\psi(t,x)$ of the Cauchy problem (\ref{BNLS})-(\ref{BNLS1})  satisfies \[ \inf_{u \in \mathcal{M}_{\mu}} \|\psi(t,\cdot)-u\|_{H^2}<\varepsilon\qquad  \text{for  all  $t$.}
\]
 In other words, if the initial data $\psi_0$ is close to an
orbit $u\in \mathcal{M}_{\mu}$, then the corresponding solution $\psi(t,x)$ of the system
(\ref{BNLS})-(\ref{BNLS1}) remains close to the set of orbits $ \mathcal{M}_{\mu}$ for all time. The analogous definition applies to the orbital stability for 
$M_{\mu,b}$ and $M_\mu^c$ in (\ref{Mub}) and (\ref{MinP}).
\end{definition}

\renewcommand{\theequation}{\thesection.\arabic{equation}}
 \setcounter{equation}{0}
\section{Construction of standing waves in the $L^2$ subcritical case}\label{s:Q-mu_subcri}
Let $\mu\in \mathbb{R}$ and $1<p<1+\frac{8}{d}$, then we note that the variational problem (\ref{VP})
\[m_{\mu}=\inf\limits_{v\in B_1} E_{\mu}(v)\]
is well-defined, namely $m_{\mu}\neq -\infty$.
Indeed, by the Gagliardo-Nirenberg inequality (\ref{GN2}), we have
\begin{eqnarray}\label{00001}
E_{\mu}(v)\geq \frac12 \|\Delta v\|_2^2+\frac{\mu}{2}\|\nabla
v\|_2^2- C \|v\|_2^{\frac{(4-d)p+4+d}{4}}\|\Delta
v\|_2^{\frac{(p-1)d}{4}}, \quad \forall v\in H^2,
\end{eqnarray}
where $C=C(p,d,\|Q_p\|_2)>0$.  When $\mu\geq 0$, \eqref{00001} implies that
\begin{eqnarray}\label{00002}
E_{\mu}(v)\geq \frac12 \|\Delta v\|_2^2- C \|v\|_2^{\frac{(4-d)p+4+d}{4}}\|\Delta
v\|_2^{\frac{(p-1)d}{4}}, \quad \forall v\in H^2.
\end{eqnarray}
When $\mu< 0$, by the inequality $\|\nabla v\|_2^2\leq \|\Delta v\|_2\|v\|_2$, \eqref{00001} implies that
\begin{eqnarray}\label{00003}
E_{\mu}(v)\geq \frac12 \|\Delta v\|_2^2+ \frac{\mu}{2}\|\Delta v\|_2\|v\|_2 - C \|v\|_2^{\frac{(4-d)p+4+d}{4}}\|\Delta
v\|_2^{\frac{(p-1)d}{4}}, \quad \forall v\in H^2.
\end{eqnarray}
Noting that $0<\frac{(p-1)d}{4}<2$ as $1<p<1+\frac{8}{d}$, we conclude from \eqref{00002} and \eqref{00003} that $m_{\mu}\neq -\infty$, then  (\ref{VP}) is well-defined.

Before solving  (\ref{VP}), it is necessary to study the properties of $m_{\mu}$, which  we shall 
prove in Lemmas \ref{3.1} to \ref{lm3.4}. 
\begin{lemma}\label{lm3.1} Let  $1<p<1+\frac{8}{d}$, then
\begin{itemize}
  \item [(a)] $m_{\mu}$ is non-decreasing with respect to $\mu\in \mathbb{R}$;
  \item [(b)] $m_{\mu}$ is continuous at each $\mu\in \mathbb{R}$.
\end{itemize}
\end{lemma}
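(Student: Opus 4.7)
The plan is to handle (a) by a direct pointwise comparison of the energy functionals $E_{\mu}$ and then deduce (b) by combining upper and lower semicontinuity of $\mu\mapsto m_\mu$ at an arbitrary $\mu_0\in\R$. For (a), observe that for any fixed $u\in B_1$ and $\mu_1\le\mu_2$,
\[E_{\mu_1}(u)-E_{\mu_2}(u)=\frac{\mu_1-\mu_2}{2}\|\nabla u\|_2^2\le 0,\]
so $E_{\mu_1}(u)\le E_{\mu_2}(u)$; taking the infimum over $u\in B_1$ yields $m_{\mu_1}\le m_{\mu_2}$.

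For (b), fix $\mu_0\in\R$ and a sequence $\mu_n\to \mu_0$. The upper semicontinuity is a routine comparison: given $\varepsilon>0$, choose $u_\varepsilon\in B_1$ with $E_{\mu_0}(u_\varepsilon)<m_{\mu_0}+\varepsilon$; then
\[m_{\mu_n}\le E_{\mu_n}(u_\varepsilon)=E_{\mu_0}(u_\varepsilon)+\tfrac{\mu_n-\mu_0}{2}\|\nabla u_\varepsilon\|_2^2,\]
where $\|\nabla u_\varepsilon\|_2$ is a fixed number, so letting $n\to\infty$ and then $\varepsilon\to 0$ gives $\limsup_n m_{\mu_n}\le m_{\mu_0}$.

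For the lower semicontinuity, choose near-minimizers $u_n\in B_1$ with $E_{\mu_n}(u_n)<m_{\mu_n}+\frac{1}{n}$. By (a), for $n$ large $E_{\mu_n}(u_n)\le m_{\mu_0+1}+1$, hence is uniformly bounded above. Combining the already-derived Gagliardo--Nirenberg-based estimate (\ref{00001}) with the interpolation $\|\nabla v\|_2^2\le \|\Delta v\|_2\|v\|_2=\|\Delta v\|_2$ valid on $B_1$ yields
\[E_{\mu_n}(u_n)\ge \tfrac12\|\Delta u_n\|_2^2-\tfrac{|\mu_n|}{2}\|\Delta u_n\|_2-C\|\Delta u_n\|_2^{(p-1)d/4},\]
which, since $(p-1)d/4<2$ and $\{\mu_n\}$ is bounded, forces $\|\Delta u_n\|_2\le R$ for some uniform $R$; in particular $\|\nabla u_n\|_2^2\le R$. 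Writing
\[m_{\mu_0}\le E_{\mu_0}(u_n)=E_{\mu_n}(u_n)+\tfrac{\mu_0-\mu_n}{2}\|\nabla u_n\|_2^2\le m_{\mu_n}+\tfrac{1}{n}+\tfrac{R|\mu_0-\mu_n|}{2}\]
and passing to the $\liminf$ gives $m_{\mu_0}\le\liminf_n m_{\mu_n}$. The only nontrivial step is the uniform $H^2$-bound on near-minimizers, which is the expected (mild) obstacle, especially in the case $\mu<0$ where the second-order term is negative; once this coercivity-based bound is in place, both semicontinuity arguments become one-line and combine to yield $\lim_n m_{\mu_n}=m_{\mu_0}$.
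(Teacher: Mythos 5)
Your proposal is correct and follows essentially the same route as the paper: both proofs establish (a) by the pointwise comparison $E_{\mu_1}(u)\le E_{\mu_2}(u)$ on $B_1$, and both prove (b) by taking near-minimizers, deriving a uniform $H^2$-bound from the Gagliardo--Nirenberg estimate (\ref{00001}) together with $\|\nabla v\|_2^2\le\|\Delta v\|_2\|v\|_2$, and then using the identity $E_{\mu}(u)-E_{\mu_n}(u)=\tfrac{\mu-\mu_n}{2}\|\nabla u\|_2^2$. Your packaging as upper plus lower semicontinuity, rather than the paper's separate one-sided limits $\mu_n\to\mu^{\pm}$, is only a cosmetic difference.
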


\begin{proof} To prove (a), 
we observe that for any $\mu_1, \mu_2\in \mathbb{R}$ with $\mu_1 < \mu_2$, there holds 
$$E_{\mu_1}(u)<E_{\mu_2}(u),\quad \forall u\in B_1.$$
Then by the definition of $m_{\mu}$, we have $m_{\mu_1}\leq m_{\mu_2}$, thus $(a)$ is proved.

 For $(b)$, we first show that for any $\mu_n \to \mu^-$ as $n\to +\infty$, $m_{\mu_n}\to m_{\mu}$. Indeed, for each $n\in \mathbb{N}$, by the definition of $m_{\mu_n}$, there exists a $u_n\in B_1$ such that
\begin{eqnarray}\label{3.1}
m_{\mu_n} \leq E_{\mu_n}(u_n)<m_{\mu_n}+ \frac{1}{n} < m_{\mu}+ \frac{1}{n}\,.
\end{eqnarray}
Then by \eqref{00001} and the inequality $\|\nabla u_n\|_2^2\leq \|\Delta u_n\|_2\|u_n\|_2$, we see that $\{u_n\}_{n=1}^{\infty}$ is bounded in $H^2$. Thus from \eqref{3.1},
\begin{eqnarray*}
m_{\mu} \leq E_{\mu}(u_n)&=&E_{\mu_n}(u_n) + (\mu-\mu_n) \dfrac{\|\nabla u_n\|_2^2}{2}\\
&<& m_{\mu}+ (\mu-\mu_n) \dfrac{\|\nabla u_n\|_2^2}{2}   + \frac{1}{n}\,,
\end{eqnarray*}
by which we conclude that $m_{\mu_n}\to m_{\mu}$ as $\mu_n \to \mu^-$. Similarly, we can show that $m_{\mu_n}\to m_{\mu}$ as $\mu_n \to \mu^+ $.  This  proves the continuity of $m_{\mu}$ for all $\mu\in \mathbb{R}$.
\end{proof}

\begin{lemma}\label{lm3.0} Let $p>1$, then $m_{\mu}\leq 0$ for any $\mu\in \mathbb{R}$.
\end{lemma}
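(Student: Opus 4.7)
The plan is to exhibit, for each $\mu\in\R$, an explicit sequence in $B_1$ along which the energy tends to zero; by the definition of the infimum this immediately yields $m_\mu\le 0$.

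First, fix a non-trivial $u\in C_c^\infty(\R^d)$ normalized by $\|u\|_2=1$, so that $u\in B_1\cap H^2$. Next, introduce the $L^2$-preserving dilation family $u_a(x):=a^{d/2}u(ax)$ for $a>0$. A change of variables gives $\|u_a\|_2=\|u\|_2=1$, hence $u_a\in B_1$, together with the standard scaling identities
\[
\|\nabla u_a\|_2^2=a^{2}\|\nabla u\|_2^2,\qquad \|\Delta u_a\|_2^2=a^{4}\|\Delta u\|_2^2,\qquad \|u_a\|_{p+1}^{p+1}=a^{\frac{d(p-1)}{2}}\|u\|_{p+1}^{p+1}.
\]
Substituting into \eqref{E:4NLS-mu} yields
\[
E_\mu(u_a)=\frac{a^{4}}{2}\|\Delta u\|_2^2+\frac{\mu a^{2}}{2}\|\nabla u\|_2^2-\frac{a^{d(p-1)/2}}{p+1}\|u\|_{p+1}^{p+1}.
\]
Since the hypothesis $p>1$ forces the exponent $d(p-1)/2$ to be strictly positive, and $2,4>0$ as well, each of the three terms tends to $0$ as $a\to 0^+$, independently of the sign of $\mu$. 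Passing to the infimum, $m_\mu\le\lim_{a\to 0^+}E_\mu(u_a)=0$, which is the asserted bound.

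I do not foresee a substantive obstacle. The only choice to be made is a scaling that simultaneously preserves the $L^2$ constraint and drives every term of $E_\mu$ to $0$, and the $L^2$-unitary dilation $u\mapsto a^{d/2}u(a\,\cdot)$ accomplishes both. In particular no restriction on $\mu$ nor any cancellation between the positive and negative contributions is needed, which is consistent with the generality of the hypothesis $p>1$ in the statement and with the fact that the claim must hold both in the mass-subcritical and in the critical/supercritical regimes.
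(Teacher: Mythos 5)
Your proposal is correct and follows essentially the same route as the paper: both fix a normalized function, apply the mass-preserving dilation $\rho^{d/2}v_0(\rho x)$, and let the scaling parameter tend to $0^+$ so that all three terms of the energy vanish (the nonlinear term because $d(p-1)/2>0$ for $p>1$). No further comment is needed.
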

\begin{proof} Let $v_0\in B_1$ be fixed and define the scaling for all $\rho>0$
\begin{align}\label{v0-rho}
v^{\rho}(x):=\rho^{\frac d2} v_0(\rho x).
\end{align}
 Then $v^{\rho} \in B_1$ for any $\rho>0$,  and
\begin{eqnarray}\label{3.3.1}
E_{\mu}( v^{\rho} )=\frac{\rho^4}{2} \|\Delta v_0\|_2^2+\frac{\mu \rho^2}{2}\|\nabla
v_0\|_2^2-\frac{\rho^{\frac{(p-1)d}{2}}}{p+1}\|v_0\|_{p+1}^{p+1}.
\end{eqnarray}
Thus for any $\mu\in \mathbb{R}$, by \eqref{3.3.1} and the definition of $m_{\mu}$, we have $m_{\mu}\leq \lim\limits_{\rho\to 0^+}E_{\mu}(v^{\rho})=0.$ Then the lemma is proved.
\end{proof}


\begin{lemma}\label{lm3.2}
Let $1<p<1+\frac{8}{d}$ and $\mu >0$. Then the following properties for $m_\mu$ hold.
\begin{itemize}
  \item [(1)] If $1<p<1+\frac{4}{d}$, $m_{\mu}<0$ for any $\mu>0$.
  \item [(2)] If $1+\frac{4}{d}\leq p<1+\frac{8}{d}$, let
             \begin{eqnarray}\label{3.2}
             \mu_0:=\sup \{\mu>0\ | \ m_{\mu}<0 \}.
             \end{eqnarray}
  Then $0<\mu_0<+\infty$ and
$$\left\{
\begin{array}{l}
m_{\mu} < 0, \quad 0<\mu <\mu_0 ,\\
m_{\mu} = 0, \quad \mu \geq\mu_0.
\end{array}
\right.$$

\end{itemize}
\end{lemma}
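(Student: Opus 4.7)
The strategy rests on the mass-preserving rescaling $v^\rho(x) := \rho^{d/2}v(\rho x)$, under which
$$E_\mu(v^\rho) = \tfrac{\rho^4}{2}\|\Delta v\|_2^2 + \tfrac{\mu \rho^2}{2}\|\nabla v\|_2^2 - \tfrac{\rho^\beta}{p+1}\|v\|_{p+1}^{p+1}, \qquad \beta := \tfrac{(p-1)d}{2},$$
so the sign of $m_\mu$ is controlled by the position of $\beta$ relative to the exponents $2$ and $4$. For part (1), since $\beta<2$, I fix any $v_0\in B_1$ with $v_0\not\equiv 0$ and let $\rho\to 0^+$; the negative term $\propto \rho^\beta$ is leading (it beats both $\rho^2$ and $\rho^4$), so $E_\mu(v_0^\rho)<0$ for some small $\rho$ and hence $m_\mu<0$ for every $\mu>0$. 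For the lower bound $\mu_0>0$ in part (2), I set $\mu=0$: since $\beta<4$, the same scaling (with only the $\rho^4$ and $\rho^\beta$ terms active) gives $m_0<0$, and then the continuity of $\mu\mapsto m_\mu$ (Lemma \ref{lm3.1}(b)) propagates this to a neighborhood of $0$.

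For the upper bound $\mu_0<\infty$ I will show that once $\mu$ is large enough, $E_\mu(v)\geq 0$ for every $v\in B_1$; combined with Lemma \ref{lm3.0}, this forces $m_\mu=0$. The key ingredient is the mixed Gagliardo--Nirenberg inequality obtained by interpolating Proposition \ref{prop2.4} with the standard $H^1$ Gagliardo--Nirenberg inequality,
$$\|v\|_{p+1}^{p+1}\leq K_{p,d}\|\Delta v\|_2^{\beta-2}\|\nabla v\|_2^{4-\beta}, \qquad v\in B_1,$$
whose exponents are $L^2$-scaling consistent ($2(\beta-2)+(4-\beta)=\beta$). Young's inequality with conjugate exponents $2/(\beta-2)$, $2/(4-\beta)$ and a free parameter $\eta>0$ yields
$$\|\Delta v\|_2^{\beta-2}\|\nabla v\|_2^{4-\beta}\leq \tfrac{\beta-2}{2}\eta^{2/(\beta-2)}\|\Delta v\|_2^2+\tfrac{4-\beta}{2}\eta^{-2/(4-\beta)}\|\nabla v\|_2^2,$$
so that
$$E_\mu(v)\geq\Bigl(\tfrac12-\tfrac{K_{p,d}(\beta-2)\eta^{2/(\beta-2)}}{2(p+1)}\Bigr)\|\Delta v\|_2^2+\Bigl(\tfrac{\mu}{2}-\tfrac{K_{p,d}(4-\beta)\eta^{-2/(4-\beta)}}{2(p+1)}\Bigr)\|\nabla v\|_2^2.$$
I first choose $\eta$ small (independent of $\mu$) so that the coefficient of $\|\Delta v\|_2^2$ is non-negative, and then $\mu$ large so that the coefficient of $\|\nabla v\|_2^2$ is non-negative; both are possible, giving $E_\mu(v)\geq 0$. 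The case $\beta=2$ (i.e.\ $p=1+4/d$) is easier: $\|v\|_{p+1}^{p+1}\leq K\|\nabla v\|_2^2$ directly, and $E_\mu\geq 0$ once $\mu\geq 2K/(p+1)$.

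The dichotomy then follows from monotonicity and continuity of $m_\mu$ (Lemma \ref{lm3.1}) together with $m_\mu\leq 0$ (Lemma \ref{lm3.0}): by definition of $\mu_0$, $m_\mu<0$ for $\mu\in(0,\mu_0)$; for $\mu>\mu_0$, the definition combined with Lemma \ref{lm3.0} gives $m_\mu=0$; continuity closes the dichotomy at $\mu=\mu_0$ with $m_{\mu_0}=0$. The main obstacle throughout is the upper bound $\mu_0<\infty$ in the regime $\beta\in(2,4)$: using Proposition \ref{prop2.4} alone (only $\|\Delta v\|_2$) yields $E_\mu(v)\geq -C$ uniformly in $\mu$, never $\geq 0$; it is precisely the need to split the $L^{p+1}$-nonlinearity into an $\|\Delta v\|_2^2$-piece absorbed independently of $\mu$ and a $\mu\|\nabla v\|_2^2$-piece tamed by large $\mu$ that forces the use of the mixed GN inequality.
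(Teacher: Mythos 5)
Your argument follows the paper's proof essentially step for step: the mass-preserving scaling $v^{\rho}$ for part (1) and for the negativity of $m_\mu$ near $\mu=0$; the mixed Gagliardo--Nirenberg bound $\|v\|_{p+1}^{p+1}\le C\,\|\Delta v\|_2^{\beta-2}\|\nabla v\|_2^{4-\beta}$ on $B_1$ combined with Young's inequality to force $E_\mu\ge 0$ for large $\mu$ (with the same separate, easier treatment of $\beta=2$, i.e.\ $p=1+\frac4d$); and the monotonicity/continuity of $m_\mu$ together with $m_\mu\le 0$ to close the dichotomy. Two remarks. First, a harmless difference: you obtain $\mu_0>0$ from $m_0<0$ plus continuity of $\mu\mapsto m_\mu$, whereas the paper plugs in the scaling $\rho=\sqrt{\mu}$ and lets $\mu\to 0^+$; both are valid given Lemma \ref{lm3.1}. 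Second, a point that needs repair: you propose to \emph{derive} the mixed inequality by interpolating Proposition \ref{prop2.4} with the standard $H^1$ Gagliardo--Nirenberg inequality. That geometric-mean interpolation requires the $H^1$ bound $\|v\|_{p+1}^{p+1}\lesssim \|v\|_2^{\,p+1-\beta}\|\nabla v\|_2^{\beta}$, which is available only when $H^1(\R^d)\hookrightarrow L^{p+1}$, i.e.\ $p+1\le \frac{2d}{d-2}$ for $d\ge 3$. For $d\le 4$ this covers all of $p<1+\frac8d$, but for $d\ge 5$ the sub-interval $p\in\bigl(\frac{d+2}{d-2},\,1+\frac8d\bigr)$ is nonempty and lies inside the range of part (2), and there the $H^1$ inequality simply does not exist, so your derivation breaks down. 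The mixed inequality itself is nonetheless true; the paper does not derive it but quotes it as inequality (2.4) of \cite{BCMN}, and citing it directly is the clean fix.
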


\begin{proof} Given $\mu>0$, from (\ref{v0-rho}) and \eqref{3.3.1} we have
\begin{eqnarray}\label{3.3.2}
\dfrac{E_{\mu}( v^{\rho} )}{\rho^{\frac{(p-1)d}{2}}}=\frac{\rho^{\frac{8+d-pd}{2}}}{2} \|\Delta v_0\|_2^2+\frac{\mu \rho^{\frac{4+d-pd}{2}}}{2}\|\nabla
v_0\|_2^2-\frac{1}{p+1}\|v_0\|_{p+1}^{p+1}.
\end{eqnarray} 
If $1<p<1+\frac{4}{d}$,  then  $\frac{4+d-pd}{2}>0$ and it follows from \eqref{3.3.2}
that  there exists some $\rho_0>0$ sufficiently small such that $m_{\mu}\leq E_{\mu}(v^{\rho_0})<0$.
 Thus $(1)$ is verified.

If $1+\frac{4}{d}\leq p<1+\frac{8}{d}$,  we let $\rho=\sqrt{\mu}$. 
Then \eqref{3.3.2} implies that
\begin{eqnarray}\label{3.3.3}
\dfrac{E_{\mu}( v^{\rho} )}{\mu^{\frac{(p-1)d}{4}}}=\frac{\mu^{\frac{8+d-pd}{4}}}{2} \|\Delta v_0\|_2^2+\frac{ \mu^{\frac{8+d-pd}{4}}}{2}\|\nabla
v_0\|_2^2-\frac{1}{p+1}\|v_0\|_{p+1}^{p+1}\,,
\end{eqnarray}
from which we conclude that $m_{\mu}<0$ if $\mu>0$ is small enough. Thus by the definition of $\mu_0$, we must have $\mu_0>0$. 
To prove  $\mu_0<\infty$, it suffices to show  $m_{\mu}=0$ 
for  $\mu>0$ large enough.

For this purpose, we recall that in \cite[(2.4)]{BCMN} the authors established the following estimate:
\begin{eqnarray}
\int |v|^{p+1}dx\leq C_{p,d} \|v\|_2^{p-1} \|\Delta v\|_2^{\frac{pd-d-4}{2}}\|\nabla v\|_2^{\frac{8+d-pd}{2}},\qquad \forall v\in H^2,
\end{eqnarray}
for some constant $C_{p,d}>0$. Thus for all $u\in B_1$,
\begin{eqnarray}\label{3.3.5}
E_{\mu}(u)\geq \dfrac{1}{2}\|\Delta u\|_2^2 + \dfrac{\mu}{2}\|\nabla u\|_2^2 - \dfrac{C_{p,d}}{p+1} \|\Delta u\|_2^{\frac{pd-d-4}{2}}\|\nabla u\|_2^{\frac{8+d-pd}{2}}.
\end{eqnarray}
If $p=1+\frac{4}{d}$,  we have  by \eqref{3.3.5},
\begin{eqnarray}\label{3.3.6}
E_{\mu}(u)\geq \Big[\dfrac{\mu}{2}- \frac{C_{p,d}}{p+1}\Big]\|\nabla u\|_2^2 \qquad \forall u\in B_1.
\end{eqnarray}
In view of Lemma \ref{lm3.0} and \eqref{3.3.6}, 
we must have  $m_{\mu}=0$ whenever $\mu>0$ large enough. 
If  $1+\frac{4}{d}< p<1+\frac{8}{d}$, we have by the
Young inequality, for any $\varepsilon>0$,
$$
\|\Delta u\|_2^{\frac{pd-d-4}{2}}\|\nabla u\|_2^{\frac{8+d-pd}{2}}\leq \varepsilon \|\Delta u\|_2^{\frac{pd-d-4}{2}p'} + C(\varepsilon)\|\nabla u\|_2^{\frac{8+d-pd}{2}q'},
$$
where $\frac{1}{p'}+\frac{1}{q'}=1$ and $C(\varepsilon)=(\varepsilon p')^{-q'/p'}(q')^{-1}$.
Let $\varepsilon$, $p'$ be such that
$$
\begin{cases}
\frac{C_{p,d}}{p+1}\varepsilon =\frac{1}{2}\\
\frac{pd-d-4}{2}p'=2
\end{cases}
\Longleftrightarrow  \quad
\left\{
\begin{array}{l}
\varepsilon = \frac{p+1}{2C_{p,d}}\\
p' = \frac{4}{pd-d-4}\,.
\end{array}
\right.
$$
Then $q'=\frac{4}{8+d-pd}$ and $\frac{8+d-pd}{2}q'=2$. So, by \eqref{3.3.5} we obtain the following estimate similar to (\ref{3.3.6}):
\begin{eqnarray*}
E_{\mu}(u)\geq \Big[\dfrac{\mu}{2}- \frac{C_{p,d}}{p+1}C(\varepsilon)\Big]\|\nabla u\|_2^2 \qquad \forall u\in B_1,
\end{eqnarray*}
from which we must also have  $m_{\mu}=0$ if $\mu>0$ large enough. 
Therefore, we have proved that $0<\mu_0<+\infty$. 

Finally, from the definition of $\mu_0$ and the continuity and non-decreasing monotonicity of $m_{\mu}$ in Lemma \ref{lm3.1}, we conclude that $m_{\mu}<0$ if $0<\mu<\mu_0$ and $m_{\mu}=0$ if $\mu\ge \mu_0$. This completes  the proof. 
\end{proof}

Concerning the case $\mu \leq 0$, we have the following lemma.
\begin{lemma}\label{lm3.4}
Let $1<p<1+\frac{8}{d}$. Then $m_{\mu}<0$ for all $\mu \leq 0$.
\end{lemma}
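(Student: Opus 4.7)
My plan is to prove Lemma \ref{lm3.4} directly by a scaling argument, essentially reusing the construction already introduced in Lemma \ref{lm3.0} but choosing the scaling parameter strategically so that the negative $L^{p+1}$ term dominates.

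First, I would observe that by the monotonicity statement in Lemma \ref{lm3.1}(a), it suffices to verify $m_0<0$, since $\mu\le 0$ yields $m_\mu\le m_0$. (Equivalently, one can keep $\mu\le 0$ explicit and exploit the fact that the term $\tfrac{\mu}{2}\|\nabla v\|_2^2$ is then non-positive.) Fix any test function $v_0\in B_1$, e.g.\ a smooth compactly supported $L^2$-normalized function, and consider the $L^2$-invariant rescaling $v^{\rho}(x)=\rho^{d/2}v_0(\rho x)\in B_1$ from \eqref{v0-rho}. Then the identity \eqref{3.3.1} gives
\begin{equation*}
E_\mu(v^\rho)=\frac{\rho^4}{2}\|\Delta v_0\|_2^2+\frac{\mu\rho^2}{2}\|\nabla v_0\|_2^2-\frac{\rho^{(p-1)d/2}}{p+1}\|v_0\|_{p+1}^{p+1}.
\end{equation*}

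Second, I would factor out the smallest relevant power of $\rho$: dividing by $\rho^{(p-1)d/2}$ yields
\begin{equation*}
\frac{E_\mu(v^\rho)}{\rho^{(p-1)d/2}}=\frac{\rho^{4-(p-1)d/2}}{2}\|\Delta v_0\|_2^2+\frac{\mu\,\rho^{2-(p-1)d/2}}{2}\|\nabla v_0\|_2^2-\frac{1}{p+1}\|v_0\|_{p+1}^{p+1}.
\end{equation*}
The hypothesis $p<1+\tfrac{8}{d}$ forces $(p-1)d/2<4$, so the $\|\Delta v_0\|_2^2$ term tends to $0$ as $\rho\to 0^+$. The middle term requires a brief case split: if $p<1+\tfrac{4}{d}$ then $(p-1)d/2<2$ and this term also vanishes as $\rho\to 0^+$; if $p\ge 1+\tfrac{4}{d}$ then the exponent $2-(p-1)d/2$ is non-positive, but since $\mu\le 0$ the whole term is non-positive and can simply be discarded from above. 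In either scenario, for all $\rho$ small enough,
\begin{equation*}
\frac{E_\mu(v^\rho)}{\rho^{(p-1)d/2}}\le -\frac{1}{2(p+1)}\|v_0\|_{p+1}^{p+1}<0,
\end{equation*}
which gives $m_\mu\le E_\mu(v^\rho)<0$ and completes the proof.

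There is no real obstacle: the lemma is essentially a repackaging of the argument in Lemma \ref{lm3.2}(1) extended to $\mu\le 0$, with the key observation being that the negative sign of $\mu$ makes the $\mu$-dependent term work \emph{in our favor} rather than against us. The only point worth handling carefully is the borderline case $p=1+\tfrac{4}{d}$ with $\mu=0$, which is why treating the middle term as non-positive (and simply dropping it when $\mu\le 0$) is cleaner than trying to let it vanish.
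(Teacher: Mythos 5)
Your proof is correct and follows essentially the same route as the paper: the $L^2$-invariant rescaling $v^{\rho}=\rho^{d/2}v_0(\rho x)$, discarding the non-positive term $\tfrac{\mu}{2}\rho^2\|\nabla v_0\|_2^2$, and using $0<\tfrac{(p-1)d}{2}<4$ to let the $\|v_0\|_{p+1}^{p+1}$ term dominate as $\rho\to 0^+$. The extra case split on the middle term is harmless but unnecessary, since for $\mu\le 0$ that term can always be dropped, exactly as the paper does.
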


\begin{proof}
Indeed, for any $\mu \leq 0$, we let $v_0\in B_1$ be fixed and consider the scaling
$v^{\rho}=\rho^{\frac d2} v_0(\rho x)$, where $\rho>0$ is an arbitrary constant. Then $v^{\rho} \in B_1$ for any $\rho>0$,  and since $\mu\leq 0$ we have
\begin{eqnarray}\label{3.4.1}
E_{\mu}( v^{\rho} )&=&\frac{\rho^4}{2} \|\Delta v_0\|_2^2+\frac{\mu \rho^2}{2}\|\nabla
v_0\|_2^2-\frac{\rho^{\frac{(p-1)d}{2}}}{p+1}\|v_0\|_{p+1}^{p+1}\nonumber\\
&\leq& \frac{\rho^4}{2} \|\Delta v_0\|_2^2-\frac{\rho^{\frac{(p-1)d}{2}}}{p+1}\|v_0\|_{p+1}^{p+1}.
\end{eqnarray}
Note that $0<\frac{(p-1)d}{2}<4$ as $1<p<1+\frac{8}{d}$, then from \eqref{3.4.1} we deduce that there exist constants $\rho_0>0$ which depends only on the values of $p,d,v_0$, such that $E_{\mu}(v^{\rho_0})<0$. Then $m_{\mu}<0$.
\end{proof}

We are ready to prove the existence of a minimizer for 
(\ref{VP}).
We show that the the infimum of (\ref{VP}) can be achieved by using the profile decomposition of bounded
sequences in $H^2$. 
\begin{proposition}\label{M} Let $1<p<1+\frac8d$.  Suppose that $\mu\in \R$ and $m_{\mu}$ satisfy one of the following conditions:
\begin{itemize}
\item [(i)] $\mu\geq 0$ and $m_{\mu}<0$;
\item [(ii)] $-\lambda_0< \mu<0$ for some $\lambda_0:=\lambda_0(p,d,\|Q_p\|_2)>0$, where $Q_p$ is given in \eqref{1.5}.
\end{itemize}
Then any minimizing sequence of $m_{\mu}$ is pre-compact in $H^2$. Moreover, there exists some $u\in B_1$  such that
 \be\label{3.3} m_{\mu}=E_{\mu}(u),\ee
namely $\mathcal{M}_{\mu}\neq \emptyset$.
\end{proposition}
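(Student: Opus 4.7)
The plan is to show that any minimizing sequence for $m_{\mu}$ is, up to translations, precompact in $H^2$, from which existence of a minimizer follows. Take $\{u_n\}\subset B_1$ with $E_\mu(u_n)\to m_\mu$. First I would establish $H^2$-boundedness using the Gagliardo--Nirenberg bound of Proposition \ref{prop2.4}, which yields for every $v\in B_1$
\[E_\mu(v)\geq \tfrac12\|\Delta v\|_2^2+\tfrac{\mu}{2}\|\nabla v\|_2^2-C(p,d,\|Q_p\|_2)\,\|\Delta v\|_2^{(p-1)d/4}.\]
Since $(p-1)d/4<2$, this is coercive in $\|\Delta v\|_2$ under hypothesis (i). Under (ii), the interpolation $\|\nabla v\|_2^2\leq\|\Delta v\|_2\|v\|_2=\|\Delta v\|_2$ absorbs the sign-indefinite middle term, again giving coercivity. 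I would then invoke Proposition \ref{decomposition} to extract a subsequence with profiles $\{V^j\}$, translations $\{x_n^j\}$, and remainders $r_n^l$, and combine \eqref{2.5}--\eqref{2.37} to get
\[E_\mu(u_n)=\sum_{j=1}^l E_\mu(V^j)+E_\mu(r_n^l)+o_n(1),\qquad 1=\sum_{j=1}^l\alpha_j+\|r_n^l\|_2^2+o_n(1),\]
where $\alpha_j:=\|V^j\|_2^2\in[0,1]$.

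Next I would rule out energy splitting. For any $V^j\ne 0$, set $\tilde V^j=V^j/\sqrt{\alpha_j}\in B_1$; a direct scaling computation gives
\[E_\mu(V^j)=\alpha_j E_\mu(\tilde V^j)+\frac{\alpha_j-\alpha_j^{(p+1)/2}}{p+1}\,\|\tilde V^j\|_{p+1}^{p+1}\geq \alpha_j m_\mu,\]
with strict inequality whenever $\alpha_j\in(0,1)$ (since $p>1$). The same scaling applied to $r_n^l$ yields $\liminf_n E_\mu(r_n^l)\geq \beta_l m_\mu$, where $\beta_l:=1-\sum_{j=1}^l\alpha_j$. Summing over $j$, using $\sum\alpha_j+\beta_l=1$ and $m_\mu<0$ (by hypothesis in (i), by Lemma \ref{lm3.4} in (ii)), the chain $m_\mu+o_n(1)=\sum E_\mu(V^j)+E_\mu(r_n^l)+o_n(1)\geq m_\mu$ must be an equality in the limit. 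This kills the strict-inequality clause, forcing exactly one nonzero profile $V^{j_0}$ with $\alpha_{j_0}=1$ and $E_\mu(V^{j_0})=m_\mu$; in particular $V^{j_0}\in\mathcal{M}_\mu$.

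It remains to exclude the vanishing scenario (all $V^j=0$), which forces $\|u_n\|_{p+1}\to 0$. Using once more the interpolation $\|\nabla u_n\|_2^2\le\|\Delta u_n\|_2$, I would estimate
\[E_\mu(u_n)\geq \tfrac{1}{2}\|\Delta u_n\|_2^2+\tfrac{\mu}{2}\|\nabla u_n\|_2^2+o(1)\geq -\tfrac{\mu^2}{8}+o(1)\quad(\mu<0),\]
and $E_\mu(u_n)\ge o(1)$ for $\mu\ge 0$. Under (i) this contradicts $m_\mu<0$ directly. Under (ii) I would take $\lambda_0=\lambda_0(p,d,\|Q_p\|_2)>0$ to be any constant such that $m_\mu<-\mu^2/8$ for all $\mu\in(-\lambda_0,0)$; by continuity (Lemma \ref{lm3.1}) together with $m_0<0$ (Lemma \ref{lm3.4}), such a $\lambda_0$ exists and can be made explicit through the sharp GN constant $B_{p,d}$. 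This rules out vanishing. With $\alpha_{j_0}=1$ and the remaining profiles vanishing, orthogonality gives $\|r_n^l\|_2\to 0$ as $n\to\infty$, hence $\|\nabla r_n^l\|_2\to 0$ by interpolation; combined with $\|r_n^l\|_{p+1}\to 0$ as $l\to\infty$ and $E_\mu(r_n^l)\to 0$, we obtain $\|\Delta r_n^l\|_2\to 0$. Thus $u_n(\cdot+x_n^{j_0})\to V^{j_0}$ strongly in $H^2$ along a subsequence, proving precompactness.

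The main obstacle is the vanishing dichotomy when $\mu<0$: because $-\mu\Delta$ is focusing, the vanishing regime contributes a nontrivial lower bound $-\mu^2/8$, and one has to quantitatively compare it to $m_\mu$ through the sharp GN constant $B_{p,d}$ and the mass of $Q_p$ in order to identify the explicit threshold $\lambda_0$. The rest, though requiring careful bookkeeping of the $L^{p+1}$ decomposition in \eqref{2.37}, is essentially mechanical once boundedness and the compactness step are in place.
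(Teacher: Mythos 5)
Your argument is correct, and its skeleton --- Gagliardo--Nirenberg coercivity for $H^2$-boundedness, the profile decomposition of Proposition \ref{decomposition}, strict subadditivity of $E_\mu$ under the mass constraint to force a single profile of full mass, a non-vanishing estimate to exclude the trivial decomposition, and remainder estimates yielding strong $H^2$-convergence --- is the same as the paper's. Two points differ. First, your subadditivity bookkeeping (normalizing $\tilde V^j=V^j/\sqrt{\alpha_j}$ and exploiting the gap $\alpha_j-\alpha_j^{(p+1)/2}$) is a cleaner but equivalent version of the paper's rescaling $\rho_j=1/\|V^j\|_2$ together with the infimum of $(\rho_j^{p-1}-1)/(p+1)$ over $j$; both force $\alpha_{j_0}=1$. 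Second, and more substantively, in case (ii) you obtain non-vanishing from the quantitative condition $m_\mu<-\mu^2/8$, valid for $\mu\in(-\sqrt{-8m_0},0)$ by monotonicity and continuity of $m_\mu$ together with $m_0<0$; this is precisely the route the paper records separately as Proposition \ref{pr:mu2}. In its own proof of case (ii) of Proposition \ref{M} the paper instead introduces the auxiliary functions $f_k(y)=\tfrac12 y^2-\tfrac k2 y-C_{p,d}\,y^{(p-1)d/4}$ and takes $\lambda_0=\sup_{k>0}\min\{k,y_{1,k}\}$, where $y_{1,k}$ is the smaller solution of $f_k(y)=m_0$; this yields the lower bound $\|\Delta v_n\|_2>y_{1,k_0}$ and hence non-vanishing on a range of $\mu$ that is in general larger than yours (the authors believe it optimal). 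Since the statement only asserts the existence of some $\lambda_0(p,d,\|Q_p\|_2)>0$, your choice is legitimate; it simply certifies a possibly smaller interval. Two bookkeeping points you should make explicit: the energy splitting error is $o_{n,l}(1)$ (limits taken first in $n$, then in $l$), and in case (ii) the passage from $E_\mu(r_n^{j_0})\to 0$ to $\|\Delta r_n^{j_0}\|_2\to 0$ requires absorbing the negative gradient term via $\tfrac{\mu}{2}\|\nabla r_n^{j_0}\|_2^2\ge \tfrac{\mu}{2}\|\Delta r_n^{j_0}\|_2\,\|r_n^{j_0}\|_2\to 0$, which you have implicitly.
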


\begin{proof} {\bf Case (i): } $\mu\geq 0$ and $m_{\mu}<0$.  
   Let $\{v_n\}_{n=1}^{\infty}\subset B_1$ be an arbitrary sequence satisfying
   \be\label{3.4} E_{\mu}(v_n)\rightarrow m_{\mu}\ \  \
{\rm as} \ \ n\rightarrow +\infty.\ee
Then for $n$ large enough, we have
\be\label{3.6}
m_{\mu}\le E_{\mu}(v_n)<\frac{m_{\mu}}{2}<0
\ee and 
\be\label{001}
\frac{1}{p+1}\int |v_n|^{p+1}dx=\frac12\|\Delta v_n\|_2^2+\frac{\mu}{2}\|\nabla v_n\|_2^2-E_{\mu}(v_n)
\geq -\frac{m_{\mu}}{2}>0.
\ee
  By H\"older inequality, we see that $\{v_n\}_{n=1}^{+\infty}\subset B_1$ is non-vanishing in $L^q$ for all $q\in(2,\frac{2d}{(d-4)^{+}})$. In addition, by \eqref{00002} and \eqref{3.4}, we see  $\{v_n\}_{n=1}^{+\infty}$ is bounded in $H^2$ if $1<p<1+\frac8d$.

Then by Proposition \ref{decomposition}, there is a weakly convergent subsequence of $\{v_n\}$ (still denoted by $\{v_n\}$) 
such that  \be\label{3.7}v_n=\sum\limits_{j=1}^lV^j(\cdot-x_n^j)+r_n^l \ee
with 
$\lim\limits_{l\rightarrow+\infty}\limsup\limits_{n\rightarrow \infty}\|r_n^l\|_q=0$  for $q\in(2,\frac{2d}{(d-4)^{+}})$, and moreover, as $n\rightarrow +\infty$,  
(\ref{2.7})-(\ref{2.37}) are ture.
Subsituting (\ref{3.7})  into the energy functional yields
\be\label{3.12}E_{\mu}(v_n)=\sum\limits_{j=1}^{l}E_{\mu}(V^j(x-x_n^j))+E_{\mu}(r_n^l)+o_{n,l}(1),\ee
where  $\lim\limits_{l\rightarrow +\infty}\lim\limits_{n\rightarrow +\infty}o_{n,l}(1)=0$.
Since $\{v_n\}_{n=1}^{+\infty}$ is non-vanishing in $L^q$ for all $q\in(2,\frac{2d}{(d-4)^{+}})$,  
we may assume  $\|V^j(x-x_n^j)\|_2\ne 0,\  \forall 1\leq j\leq l$ for all $\ell\ge 1$ without loss of generality, 
according to Lions' vanishing lemma  \cite{Cazenave2003}. 
Define 
\be\label{3.13}V_{\rho_j}^j=\rho_jV^j(x-x_n^j)\ \ {\rm with}\
\  \rho_j=\frac{1}{\|V^j\|_2}\ne 0. \ \ee
Then  $\|V_{\rho_j}^j\|_2=1$ and
\begin{equation*}
 E_{\mu}(V_{\rho_j}^j)
=\rho_j^2 E_{\mu}(V^j(x-x_n^j))-\frac{\rho_j^2(\rho_j^{p-1}-1)}{p+1}\|V^j(x-x_n^j)\|_{p+1}^{p+1},
\end{equation*}
which implies 
\be\label{3.14}E_{\mu}(V^j(x-x_n^j))=\frac{E_{\mu}(V_{\rho_j}^j)}{\rho_j^2}+\frac{\rho_j^{p-1}-1}{p+1}\|V^j(x-x_n^j)\|_{p+1}^{p+1}.\ee

Similarly, $E_{\mu}(r_n^l)$ can be estimated as follows:
\be\label{000}
E_{\mu}(r_n^l)=\|r_n^l\|_2^2E_{\mu}(\frac{r_n^l}{\|r_n^l\|_2})+\frac{\left(\frac{1}
{\|r_n^l\|_2}\right)^{p-1}-1}{p+1}\|r_n^l\|_{p+1}^{p+1}\geq
\|r_n^l\|_2^2E_{\mu}(\frac{r_n^l}{\|r_n^l\|_2})
\ee
as $n, l\rightarrow+\infty$. On the one hand, it follows from  the definition of $m_{\mu}$
 that \be\label{3.15}E_{\mu}(V_{\rho_j}^j)\geq m_{\mu}\ \ \ {\rm and }\ \ \
E_{\mu}(\frac{r_n^l}{\|r_n^l\|_2})\geq m_{\mu}\,.
\ee 
Meanwhile, since
$\sum\limits_{j=1}^{l}\|V^j(x-x_n^j)\|_2^2$ is convergent,  there
exists $j_0\geq 1$ such that
\be\label{3.16}\inf\limits_{j\ge 1}\frac{\rho_j^{p-1}-1}{p+1}=\frac{1}{p+1}\left(\frac{1}{\|V^{j_0}\|_2^{p-1}}-1\right).
\ee
Substituting (\ref{3.14})-(\ref{3.16}) into (\ref{3.12}), we obtain the following estimate
as $n\rightarrow +\infty$ and $l\rightarrow +\infty$
   \begin{align}
   \label{3.17}
   E_{\mu}(v_n)=&\sum\limits_{j=1}^{l}\left(\frac{E_{\mu}(V_{\rho_j}^j)}{\rho_j^2}
   +\frac{\rho_j^{p-1}-1}{p+1}\|V^j(x-x_n^j)\|_{p+1}^{p+1}\right)+E_{\mu}(r_n^l)+o_{n,l}(1)\notag\\
   \ge&  \sum\limits_{j=1}^{l}\frac{m_{\mu}}{\rho_j^2}
+\inf\limits_{j\ge 1}\frac{\rho_j^{p-1}-1}{p+1}(\sum\limits_{j=1}^{l}\|V^j(x-x_n^j)\|_{p+1}^{p+1})+\|r_n^l\|_2^2m_{\mu} +o_{n,l}(1)\notag\\
\ge&  \sum\limits_{j=1}^{l}\frac{m_{\mu}}{\rho_j^2} +\|r_n^l\|_2^2\, m_{\mu}+ \frac{C_0}{p+1}\left(\frac{1}{\|V^{j_0}\|_2^{p-1}}-1\right)+o_{n,l}(1)\notag\\
  =& m_{\mu}+\frac{C_0}{p+1}\left(\frac{1}{\|V^{j_0}\|_2^{p-1}}-1\right)+o_{n,l}(1)
  \end{align}
for some constant $C_0>0$ independent of $n,\ell$, where we have also noted (\ref{2.7})-(\ref{2.37}). 

Now, taking $n\rightarrow +\infty$  and $l\rightarrow+\infty$ in (\ref{3.17}),
we obtain by (\ref{3.4})  
$$ \frac{C_0}{p+1}\left(\frac{1}{\|V^{j_0}\|_2^{p-1}}-1\right)\leq
0.$$ Then we must have $\|V^{j_0}\|_2^2\geq 1$. Equation (\ref{2.7}) implies there exists only one term
$V^{j_0}\neq 0$ in the decomposition (\ref{3.7}) such that
  $\|V^{j_0}\|_2^2=1$.
This means 
 \[
 \|v_n\|_2 \to \|V^{j_0}\|_2 \quad\text{ and}\quad  \|r_n^{j_0}\|_2\to 0 \qquad \text{as $n\to +\infty$.}
 \]
In order to show  $\|\Delta v_n\|_2 \to \|\Delta V^{j_0}\|_2$ as $n\rightarrow +\infty$, we note by \eqref{3.12} and \eqref{2.6} 
\begin{eqnarray}\label{3.255}
E_{\mu}(v_n)&=& \frac{1}{2} \int|\Delta  V^{j_0}|^{2}dx+\frac{\mu}{2} \int|\nabla V^{j_0}|^{2}dx
 -\frac {1}{p+1} \int |V^{j_0}|^{p+1}dx \nonumber  \\
 &+& \frac{1}{2} \int|\Delta  r_n^{j_0}|^{2}dx+\frac{\mu}{2} \int|\nabla r_n^{j_0}|^{2}dx
 -\frac {1}{p+1} \int |r_n^{j_0}|^{p+1}dx +o_n(1) \nonumber  \\
&=& E_{\mu}(V^{j_0}) +\underbrace{\frac{1}{2} \int|\Delta  r_n^{j_0}|^{2}dx+\frac{\mu}{2} \int|\nabla r_n^{j_0}|^{2}dx}_{nonnegative}+o_n(1).
\end{eqnarray}
This implies, in view of the definition of $m_\mu$, positivity of $\mu\ge 0$, (\ref{3.255}) and (\ref{2.8}) 
\begin{align*}
&E_{\mu}(V^{j_0})=m_{\mu}=\lim\limits_{n}E_{\mu}(v_n)\quad \\
&\lim_n  \|\Delta r_n^{j_0}\|_2=\lim_n  \|\nabla r_n^{j_0}\|_2=0\\
&\lim_n \Vert\Delta v_n\Vert_2=\| \Delta V^{j_0}\|_2\,.
\end{align*}
Therefore, by (\ref{2.5}), the weak convergence of $\{v_n\}$ in $H^2$ and the norm convergence $\norm{v_n}_{H^2}\to \norm{V^{j_0}}_{H^2}$
we conclude that $v_n(x+x_n^{j_0})\to V^{j_0}$ in $H^2$ 
and
$E_{\mu}(V^{j_0})=m_{\mu}$, namely, $m_{\mu}$ is achieved at  $V^{j_0}\in B_1$. Thus Case (i) is proved.
\vspace{0.3cm}

{\bf Case (ii):}   $\mu<0$. By Lemma \ref{lm3.2} we know that $m_{\mu}<0$ for all $\mu<0$. Let $\{v_n\}_{n=1}^{+\infty}\subset B_1$ be an arbitrary minimizing sequence of $m_{\mu}$,  then \eqref{00003} implies $\{v_n\}_{n=1}^{+\infty}$ is bounded in $H^2$ and satisfies (\ref{3.6}) as $n$ is sufficiently large. 
 We may assume $\{v_n\}$ is weakly convergent in $H^2$ without loss of generality. 
Now we claim that 
there exists a constant $\lambda_0:=\lambda_0(p,d,\|Q_p\|_2)>0$ such that given $-\lambda_0< \mu<0$, 
$\{v_n\}_{n=1}^{+\infty}$ is non-vanishing in the sense that for $n$ large enough
\begin{eqnarray}\label{3.00nv}
\int |v_n|^{p+1}dx \geq C_0>0
\end{eqnarray}
for some constant $C_0$.

Indeed, from \eqref{GN2} we have 
\begin{eqnarray}\label{3.01}
E_{\mu}(u)\geq \dfrac{1}{2}\|\Delta u\|_2^2 + \dfrac{\mu}{2}\|\Delta u\|_2 -C_{p,d}\|\Delta u\|_2^{\frac{(p-1)d}{4}}, \quad \forall u\in B_1.
\end{eqnarray}
where $C_{p,d}:=
\frac{1}{2\norm{Q_p}_2^{p-1}}$.
Define the function for all $k> 0$
\begin{align}\label{f_k(y)}
f_{k}(y):=\frac{1}{2}y^2 - \frac{k}{2}y -C_{p,d}\,y^{\frac{(p-1)d}{4}}, \quad \forall y\geq 0,
\end{align} 
noting that $f_k(0)=0$ and $0<\frac{(p-1)d}{4}<2$ if $p<1+\frac{8}{d}$. 
Let $m_0$ be given in \eqref{VP0}, then $m_0<0$ by Lemma \ref{lm3.4}. Observe that there exist unique constants
$y_{1,k}:=y_{1,k}(p,d,\|Q_p\|_2)>0$ and $y_{2,k}:=y_{2,k}(p,d,\|Q_p\|_2)>0$, with $y_{1,k}<y_{2,k}$ such that
\begin{eqnarray}\label{eqn:m0-fmu}
{m_0}> f_{k}(y)  \Longleftrightarrow  y\in (y_{1,k}, y_{2,k}).
\end{eqnarray}
Define \begin{equation}\label{lam0-maxmin}
\lambda_0:=\sup_{k>0}( \min\{k, y_{1,k}\}), 
\end{equation}
where we note that $k\mapsto \min\{k, y_{1,k}\}$ is continuous. 
 Evidently $\lambda_0>0$ depends on $p,d$ and $\|Q_p\|_2$ only. Given  $-\lambda_0<\mu<0$, by the definition of $\lam_0$
 there exists $k_0$ such that $-\lambda_0<-\min(k_0,y_{1,k_0}) <\mu<0$. Since $\{v_n\}\subset B_1$ is a minimizing sequence of $m_\mu$, 
we have by \eqref{3.01}
\begin{eqnarray*}
m_0> m_{\mu} &=& E_{\mu}(v_n)+o_n(1)\\
&\ge& \dfrac{1}{2}\|\triangle v_n\|_2^2 + \dfrac{\mu}{2}\|\nabla  v_n\|_2 - C_{p,d}\|\De v_n\|_2^{\frac{(p-1)d}{4}}+o_n(1) \\
&\ge& \dfrac{1}{2}\|\triangle v_n\|_2^2 - \dfrac{k_0}{2}\|\De v_n\|_2 - C_{p,d}\|\De v_n\|_2^{\frac{(p-1)d}{4}}+o_n(1)\\
&=& f_{k_0}(\|\De v_n\|_2)+o_n(1),
\end{eqnarray*}
where $o_n(1)\to 0$ as $n\to +\iy$.  
Hence for sufficiently large $n$, there holds $m_0>f_{k_0}(\Vert \Delta v_n\Vert_2)$.
In virtue of \eqref{eqn:m0-fmu}, we obtain that  $\|\De v_n\|_2> y_{1,k_0}>0$ for large $n$. 
 Thus, if $-\lambda_0<\mu<0$, we have for $n$ large enough 
\begin{eqnarray}
\frac{1}{p+1}\int |v_n|^{p+1}dx&=&\frac12\|\triangle v_n\|_2^2+\dfrac{\mu}{2}\|\nabla v_n\|_2^2-E_{\mu}(v_n)\notag\\
 &\ge&  \frac{1}{2} \|\triangle v_n\|_2(\|\triangle v_n\|_2+\mu)-\dfrac{m_{\mu}}{2}\notag\\
 &\ge& \frac{\lambda_0+\mu }{2} \|\De v_n\|_2-\dfrac{m_{\mu}}{2}\notag\\
&>& -\frac{m_{\mu}}{2}>0,\label{Mmu+}
\end{eqnarray}
which establishes \eqref{3.00nv}. 
Therefore, by H\"older inequality  we conclude that $\{v_n\}\subset B_1$ is non-vanishing in $L^q(\R^d)$ for all $q\in(2,\frac{2d}{(d-4)^{+}})$. 

Finally, we complete the proof of Case (ii) by  showing the pre-compactness of $\{v_n\}$ using the profile decomposition (\ref{2.5}) to (\ref{2.37}). 
 This proceeds the same as  in Case (i),  
  except for the part (\ref{3.255}) in proving
   $\|\Delta v_n\|_2 \to \|\Delta V^{j_0}\|_2$ as $n\rightarrow +\infty$. Indeed, since $\mu<0$, we have by \eqref{3.12} and \eqref{2.6}
\begin{align}\label{E-mu-r_n}
E_{\mu}(v_n) 
=&E_{\mu}(V^{j_0}) + \frac{1}{2} \int|\Delta  r_n^{j_0}|^{2}dx+\frac{\mu}{2} \int|\nabla r_n^{j_0}|^{2}dx +o_n(1)\notag\\
\ge&E_{\mu}(V^{j_0}) + \frac{1}{2} \int|\Delta  r_n^{j_0}|^{2}dx+\frac{\mu}{2} \Vert \De r_n^{j_0}\Vert_2 \norm{r_n^{j_0}}_2 +o_n(1)\notag\\ 
=&E_{\mu}(V^{j_0}) + \frac{1}{2} \int|\Delta  r_n^{j_0}|^{2}dx +o_n(1), 
\end{align}
where we note $\|\Delta r_n^{j_0}\|_2\|r_n^{j_0}\|_2\to 0$ since $\|\De r_n^{j_0}\|_2$ is bounded and $\norm{r_n^{j_0}}_2\to 0$. 
It follows that 
 \begin{align*}
&E_{\mu}(V^{j_0})=m_{\mu}=\lim\limits_{n\to +\infty}E_{\mu}(v_n)\quad\\ 
&\lim_n\|\Delta r_n^{j_0}\|_2= 0\quad\text{and}\quad
\lim_n\|\Delta v_n\|_2= \|\Delta V^{j_0}\|_2\,.
\end{align*} 
And so, since $\{v_n\}$ is weakly convergent in $H^2$, we must have $\lim_n v_{n}(\cdot+x^{j_0}_n)=V^{j_0}$ in $H^2$ strongly. 
  Therefore, we have proved Proposition \ref{M}. 
\end{proof}
\begin{remark} The non-vanishing property \eqref{3.00nv} is one key estimate in the proof
for the case $\mu<0$. 
Observe that the argument for proving the non-vanishing property (\ref{001}) in the case $\mu\ge 0$ 
does not work for the case $\mu<0$. We overcome the difficulty by introducing the $f_\mu$ function (\ref{f_k(y)}), which enables us to
obtain a lower bound for $\norm{\De v_n}_2$ and then a lower bound for $\norm{v_n}_{p+1}^{p+1}$.
In the proof for $\mu<0$, we can also pick $\lambda_{0,1}:=\min\{1, y_{1,1}\}$ in place of the notion $\lam_0$ defined in (\ref{lam0-maxmin}). 
However, $-\lam_0=-\lam_0(p,d,\norm{Q_p}_2)$ gives an optimal lower bound for $\mu<0$.
\end{remark}

 For $\mu<0$,  a careful examination of the proof in the Case (ii) of the above proposition  shows that there is an alternative sufficient condition  
that ensures the existence of ground states for (\ref{VP}).
\begin{proposition}\label{pr:mu2} Let $1<p<1+\frac8d$ and $\mu<0$.  Suppose $m_\mu<-\frac{\mu^2}{8}$. 
Then any minimizing sequence of $m_{\mu}$ is pre-compact in $H^2$. Moreover, (\ref{VP}) admits a minimizer, namely,  ground state solution. 
\end{proposition}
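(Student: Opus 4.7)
The plan is to replicate the proof of Case (ii) of Proposition \ref{M} almost verbatim, substituting the condition $-\lambda_0<\mu<0$ by the hypothesis $m_\mu<-\mu^2/8$ at precisely the place where non-vanishing of a minimizing sequence is established. The four-step structure remains unchanged: (i) $H^2$-boundedness of an arbitrary minimizing sequence $\{v_n\}\subset B_1$; (ii) non-vanishing $\liminf_n\|v_n\|_{p+1}^{p+1}>0$; (iii) application of the profile decomposition (Proposition \ref{decomposition}) combined with the mass-subadditivity/scaling argument of Case (i) to reduce to a single nontrivial profile $V^{j_0}$ of unit $L^2$-mass; (iv) upgrade of weak convergence to strong convergence in $H^2$.

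The central new ingredient is a completion-of-square estimate. For any $v\in B_1$, integration by parts and Cauchy--Schwarz give $\|\nabla v\|_2^2\le \|\Delta v\|_2\|v\|_2=\|\Delta v\|_2$, so, using $\mu<0$,
\begin{align*}
E_\mu(v_n)
&\ge \frac{1}{2}\|\Delta v_n\|_2^2+\frac{\mu}{2}\|\Delta v_n\|_2-\frac{1}{p+1}\|v_n\|_{p+1}^{p+1}\\
&=\frac{1}{2}\Bigl(\|\Delta v_n\|_2+\tfrac{\mu}{2}\Bigr)^2-\frac{\mu^2}{8}-\frac{1}{p+1}\|v_n\|_{p+1}^{p+1}\ge -\frac{\mu^2}{8}-\frac{1}{p+1}\|v_n\|_{p+1}^{p+1}.
\end{align*}
Passing to the limit along a minimizing sequence yields $(p+1)^{-1}\liminf_n\|v_n\|_{p+1}^{p+1}\ge -\mu^2/8-m_\mu$, which is strictly positive precisely under the hypothesis $m_\mu<-\mu^2/8$. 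H\"older interpolation then propagates non-vanishing to every $L^q$ with $q\in(2,2d/(d-4)^+)$. Boundedness of $\{v_n\}$ in $H^2$ follows from (\ref{00003}) together with the subquadratic exponent $(p-1)d/4<2$, exactly as in Case (ii) of Proposition \ref{M}.

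With boundedness and non-vanishing secured, the remainder of the argument imports directly from the proof of Proposition \ref{M}: the scaling identity (\ref{3.14}), together with $E_\mu(V_{\rho_j}^j)\ge m_\mu$ and $E_\mu(r_n^l/\|r_n^l\|_2)\ge m_\mu$, combines with the almost-orthogonality identities (\ref{2.7})--(\ref{2.37}) to force a single profile $V^{j_0}$ with $\|V^{j_0}\|_2=1$ while all remaining profiles vanish; strong convergence $\|\Delta v_n\|_2\to\|\Delta V^{j_0}\|_2$ then follows from the energy inequality (\ref{E-mu-r_n}), since $\|\Delta r_n^{j_0}\|_2$ is bounded and $\|r_n^{j_0}\|_2\to 0$ imply $|\mu|\,\|\nabla r_n^{j_0}\|_2^2\le |\mu|\,\|\Delta r_n^{j_0}\|_2\,\|r_n^{j_0}\|_2\to 0$. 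The main (indeed only) new obstacle relative to Proposition \ref{M} is the non-vanishing step, which the hypothesis $m_\mu<-\mu^2/8$ resolves by supplying a sharp quantitative threshold below which the competing Laplacian term $\tfrac{\mu}{2}\|\nabla v\|_2^2$ cannot absorb the full energy deficit of the minimizer.
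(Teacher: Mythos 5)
Your proposal is correct and follows essentially the same route as the paper: the paper's proof likewise reduces everything to verifying the non-vanishing condition and obtains it from the bound $\tfrac12\|\Delta v_n\|_2^2+\tfrac{\mu}{2}\|\Delta v_n\|_2\ge-\tfrac{\mu^2}{8}$ (the minimum of $\tfrac12 y^2+\tfrac{\mu}{2}y$), which is exactly your completed square, before importing the profile-decomposition compactness argument from Proposition \ref{M}. Your direct passage to the $\liminf$ replaces the paper's equivalent $\eta$-approximation $E_\mu(v_n)<m_\mu+\eta$ with $0<\eta<-m_\mu-\tfrac{\mu^2}{8}$, but the content is identical.
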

\begin{proof}[Outline of the proof]
 First, notice that $m_\mu<m_0<0$ if $\mu<0$, thus the condition $m_\mu<-\frac{\mu^2}{8}$ is satisfied for $\mu\in (-\Lambda_0, 0)$ with some small $\Lambda_0>0$.   
The proof of Proposition \ref{pr:mu2} is similar to that of Proposition \ref{M}, but without using the $f_k$ functions. 
The only point of check is the proof of the non-vanishing condition (\ref{3.00nv}), which we show as follows. 
Let $\eta>0$ be arbitrary. Then for $n$ large enough, $m_\mu\le E_{\mu}(v_n)<m_\mu+\eta$, and so,
\begin{align}
&\frac{1}{p+1}\int |v_n|^{p+1}=\frac12\|\De v_n\|_2^2+\dfrac{\mu}{2}\|\nabla v_n\|_2^2-E_{\mu}(v_n)\notag\\
 >&\frac{1}{2} \Vert\De v_n\Vert_2^2 +\frac{\mu}{2}\Vert \De v_n\Vert_2-(m_{\mu}+\eta)\notag\\
\ge& -\frac{\mu^2 }{8}-m_{\mu}-\eta >0\label{vn-p+eta}
\end{align} 
if choosing $\eta$ small such that $0<\eta< -m_{\mu}-\frac{\mu^2 }{8} $. Here we have noted that 
the minimum of the function $g(y):=\frac{1}{2} y^2 +\frac{\mu}{2}y$ is given by $-\frac{\mu^2 }{8}$.
Therefore the non-vanishing condition (\ref{vn-p+eta}) is established. This concludes the outline of the proof of Proposition \ref{pr:mu2}. 
\end{proof}

Now we apply Proposition \ref{M} to complete  the proof of Theorem \ref{th1} on the orbital stability by following 
 a standard concentration compactness argument in \cite{Cazenave2003,CazenaveLions1982}. 

\begin{proof}[{\bf Proof of Theorem \ref{th1}}] We shall prove the orbital stability by contradiction. 
Write $\psi(t)=\psi(t,\cdot)$.
First, we claim that if $\mu\in\R$ and $1<p<1+\frac8d$,  then $\{\|\psi(t)\|_{H^2}\}$ is bounded for all $t\in I$. 
According to Proposition \ref{local},  the solution $\psi$ of (\ref{BNLS})-(\ref{BNLS1}) exists globally in time. 
To prove the claim, we divide our discussions in two cases. 
Case (i): $\mu\ge 0$. From \eqref{GN2} and Proposition \ref{local}  we have 
 for all $t\in I$ (the maximal time interval)
\[\begin{array}{lll} \vspace{0.3cm}
 E_{\mu}(\psi_0)=E_{\mu}(\psi)&\geq \frac12 \|\Delta \psi(t)\|_2^2+\frac{\mu}{2}\|\nabla
\psi(t)\|_2^2- C \|\psi(t)\|_2^{\frac{(4-d)p+4+d}{4}}\|\Delta
\psi(t)\|_2^{\frac{(p-1)d}{4}}\\
 &\ge
(\frac12-\veps) \|\Delta \psi(t)\|_2^2-C(\varepsilon, p,d,\|\psi_0\|_2)\end{array}
\]
for all $0<\varepsilon<\frac12$. Thus,  $\{\|\psi(t)\|_{H^2}\}$ is bounded for all $t\in I$. 

Case (ii)  $\mu<0$. Similarly, 
 we deduce that  for any $0<\varepsilon<\frac{2}{4-\mu}$
\[\begin{array}{lll} \vspace{0.2cm}
 E_{\mu}(\psi_0)=E_{\mu}(\psi)&\geq \frac12 \|\Delta \psi(t)\|_2^2+\frac{\mu}{2}\|\nabla
\psi(t)\|_2^2- C \|\psi(t)\|_2^{\frac{(4-d)p+4+d}{4}}\|\Delta
\psi(t)\|_2^{\frac{(p-1)d}{4}}\\
 &\geq (\frac12+\frac{\mu \varepsilon}{4}-\varepsilon)\|\Delta \psi(t)\|_2^2+\frac{\mu}{4\veps}\|\psi_0\|_2^2-C(\varepsilon, p,d,
\|\psi_0\|_2),\end{array}\] where $\frac12+\frac{\mu \varepsilon}{4}-\varepsilon>0$. 
Then 
we see  $\{\|\psi(t)\|_{H^2}\}$ is bounded  for all $t\in I$.
This verifies  the claim and hence $\psi$ exists globally in time by the blowup alternative assertion in Proposition \ref{local}. 

Secondly, we  prove the orbital stability for $\mathcal{M}_\mu$. 
  Assume by contradiction that $\mathcal{M}_{\mu}$ is not orbitally stable, 
then there exist  $\varepsilon_0>0$ and a sequence of initial data $\{\psi_0^n\}_{n=1}^{+\infty}$ such that
\be\label{3.22}\inf\limits_{u\in
\mathcal{M}_{\mu}}\|\psi_0^n-u\|_{H^2}<\frac1n\ee 
and there exists a sequence
$\{t_n\}_{n=1}^{+\infty}$ such that the corresponding solution
sequence $\{\psi_n(t_n)\}_{n=1}^{+\infty}$ satisfies
 \be\label{3.23}\inf\limits_{u\in \mathcal{M}_{\mu}}\|\psi_n(t_n)-u\|_{H^2}\geq\varepsilon_0.\ee
Note from the conservation laws that as $n\rightarrow +\infty$
\begin{eqnarray*}
\left\{\begin{matrix}
\int |\psi_n(t_n)|^2dx=\int |\psi_0^n|^2dx\rightarrow \int |u|^2dx=1,\\
E_{\mu}(\psi_n(t_n))=E_{\mu}(\psi_0^n)\rightarrow E_{\mu}(u)=m_{\mu}.
\end{matrix}\right.
\end{eqnarray*}
Let $\varphi_n(t_n):= \rho_n \cdot \psi_n(t_n)$ with $\rho_n= 1/ \|\psi_n(t_n)\|_2$, then $\varphi_n(t_n)\in B_1$ and $\rho_n \to 1$. In particular,  $\{\varphi_n(t_n)\}_{n=1}^{+\infty}$ is a minimizing sequence of $m_{\mu}$. From Lemma \ref{lm3.2} and
Proposition \ref{M},  we see that under the assumptions of Theorem \ref{th1}, 
there exists a minimizer
$w\in B_1$ such that $ \|\varphi_n(t_n)-w\|_{H^2}\rightarrow 0$ as $n\to +\infty$. 
That means
 \begin{equation*}
  \|\psi_n(t_n)-w\|_{H^2}\rightarrow 0\  \ {\rm as}\ \ n\rightarrow +\infty\, ,
  \end{equation*}
which  contradicts  (\ref{3.23}). This  completes the proof.
\end{proof}

\begin{proof}[{\bf Proof of Corollary \ref{th1.2}}] Let $\{\mu_k\}_{k=1}^{+\infty}$ be a sequence with $\mu_k\to 0$ as $k\to+\infty$, and $\{u_k\}_{k=1}^{+\infty}\subset B_1$ be a sequence of minimizers for $m_{\mu_k}<0$, namely, 
\[
E_{\mu_k}(u_k)=m_{\mu_k}<0, \quad  \forall k\in \mathbb{N}.
\]
By the continuity in Lemma \ref{lm3.1} $(b)$ and Lemma \ref{lm3.4}, we see $m_{\mu_k} \to m_0<0$ as $k\to +\infty$. Then $\{u_k\}_{k=1}^{+\infty}\subset B_1$ is a minimizing sequence of $m_0$.  
We claim that there exists a subsequence $\{u_{k_j}\}\subset \{u_k\}$ such that
\begin{itemize}
  \item [(a)] $\{u_k\}_{k=1}^{+\infty}\subset B_1$ is bounded in $H^2$\, ;
  \item [(b)] $u_{k_j}\underset{j}\to u_0$ in $H^2$ for some $u_0\in B_1$.
\end{itemize}
Indeed, we deduce from \eqref{00002} and \eqref{00003}  that
\begin{eqnarray}\label{3.35}
m_{\mu_k}=E_{\mu_k}(u_k)\geq \frac12 \|\Delta u_k\|_2^2
-C \|\Delta u_k\|_2^{\frac{(p-1)d}{4}}, \ \mbox{ if } \mu_k>0,
\end{eqnarray}
and \begin{eqnarray}\label{3.351}
m_{\mu_k}=E_{\mu_k}(u_k)\geq \frac{1}{2}\|\Delta u_k\|_2^2 + \frac{\mu_k}{2} \|\Delta u_k\|_2  
-C \|\Delta u_k\|_2^{\frac{(p-1)d}{4}},\ \mbox{ if } \mu_k<0
\end{eqnarray}
for some constant $C>0$ independent of $k\in \mathbb{N}$.  Note that $0<\frac{(p-1)d}{4}<2$ if $1<p<1+\frac{d}{8}$. It follows from \eqref{3.35} and \eqref{3.351} 
that $\{\|\Delta u_k\|_2\}$ is bounded. 
Thus $(a)$ is verified. 

Now that $\{u_k\}\subset B_1$ is bounded in $H^2$, there exist a weakly convergent subsequence 
$\{u_{k_j} \}_j$ and some $u_0$ in $H^2$ such that  $u_{k_j} \rightharpoonup u_0$ in $H^2$ as $j\to +\iy$.
 If $\{u_{k_j}\}$ is non-vanishing in $L^{p+1}$, namely, there exists a constant $C_0$  such that
\begin{eqnarray}\label{3.36}
\int |u_{k_j}|^{p+1}\geq C_0>0
\end{eqnarray}
for $j$ sufficiently large, then we can apply profile decomposition argument as in the proof of Proposition \ref{M} 
 to show that  $u_{k_j}\to u_0$ in $H^2$. Thus it  remains  to verify \eqref{3.36}. 
 But this can be proved the same way as (\ref{001}) and (\ref{3.00nv})   
as soon as  $\mu_{k_j}$ is sufficiently close to $0$.
\end{proof}

\begin{proof}[{\bf Proof of Theorem \ref{th1.1}}] 
We assume by contradiction that for $\mu >\mu_0$, there exists a minimizer  $v_0\in B_1$ such that $E_{\mu}(v_0)=m_{\mu}$. By Lemma \ref{lm3.2} (2), $E_{\mu}(v_0)=m_{\mu}=0$.  Thus from the definition of $E_{\mu}(u)$, we have
$$\dfrac{\mu-\mu_0}{2}\|\nabla v_0\|_2^2=E_{\mu}(v_0)-E_{\mu_0}(v_0)\leq 0-m_{\mu_0}=0,$$
which is a contradiction 
since $\|\nabla v_0\|_2\neq 0$. Therefore, we have shown $\mathcal{M}_{\mu}=\emptyset$ for all $\mu \in (\mu_0, +\infty)$.
\end{proof}

\renewcommand{\theequation}{\thesection.\arabic{equation}}
 \setcounter{equation}{0}
\section{Construction of standing waves in the $L^2$ critical case}\label{s:L2-critic}

In this section, we address the $L^2$-critical case $p=1+\frac{8}{d}$ by
consider the following minimization problem
proposed in (\ref{1.7}):  Given $\mu\in \mathbb{R}$ and $b>0$,
\begin{eqnarray*}
m_{\mu, b} := \inf_{u\in B_1}E_{\mu,b}(u),
\end{eqnarray*}
where 
\begin{eqnarray*}
E_{\mu,b}(u)=\frac{1}{2}\|\Delta u\|_2^2 + \frac{\mu}{2}\|\nabla u\|_2^2 - \frac{b}{2+\frac{8}{d}}\int |u|^{2+\frac{8}{d}}dx
\end{eqnarray*}
as defined in (\ref{1.8}).  Recall that when $p=1+\frac{8}{d}$, the Gagliardo-Nirenberg inequality \eqref{GN2} reads
\begin{eqnarray}\label{4.0}
\int |u|^{2+\frac{8}{d}}dx \leq \dfrac{1+\frac{4}{d}}{\|Q^*\|_2^{\frac{8}{d}}}\|u\|_2^{\frac{8}{d}}\|\Delta u\|_2^2 \qquad \forall u\in H^2,
\end{eqnarray}
where $Q^*$ is a ground state of \eqref{1.4} and the equality holds  if and only if 
$u$ is a minimizer of (\ref{JpdU}) or equivalently, $u$ solves (\ref{1.4}). 
Recall $b^*=\|Q^*\|_2^{\frac{8}{d}}$ from (\ref{4.00}), then similar to the proof of \cite[Theorem 1.2]{BCGJ}, 
 we can easily obtain the following lemma by applying  Pohozaev identity (\ref{e:Qu}). 
\begin{lemma}\label{lm4.0} Let $\mu\ge  0$ and $p=1+\frac8d$. Then 
\begin{equation}\label{4.3}
\begin{cases}
m_{\mu, b}=0,         & 0<b\leq b^*,\\
m_{\mu, b}=-\infty,   &  b>b^*.
\end{cases}
\end{equation}
Moreover, if $\mu> 0$, then for all $b\in (0,b^*]$, the functional $E_{\mu,b}(u)$ has no  critical point on $B_1$, that is, $m_{\mu,b}$ can not be achieved for any $b>0$. 
If $\mu=0$, then a ground state solution of (\ref{1.7}) exists if and only if  $b=b^*$.
\end{lemma}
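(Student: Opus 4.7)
The plan is to separate the two quantitative claims on the value of $m_{\mu,b}$ from the qualitative claims on (non-)existence of minimizers, using the sharp Gagliardo--Nirenberg inequality \eqref{4.0} and the scaling $v^{\rho}(x):=\rho^{d/2}v_0(\rho x)$ from Lemma \ref{lm3.0} as the two main tools. The key structural feature is that, since $p-1=8/d$, the $L^{p+1}$ term and $\|\Delta\cdot\|_2^2$ scale identically as $\rho^4$ under $v^\rho$; this is the hallmark of the $L^2$-critical regime and already foreshadows the sensitivity of the problem to the size of $b$ relative to $b^*$.

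\medskip

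\noindent\textbf{Computing $m_{\mu,b}$.} First I would insert \eqref{4.0} with $\|u\|_2=1$ into $E_{\mu,b}$ to obtain
\[
E_{\mu,b}(u)\ge \tfrac{1}{2}\bigl(1-\tfrac{b}{b^*}\bigr)\|\Delta u\|_2^2+\tfrac{\mu}{2}\|\nabla u\|_2^2,
\]
which is non-negative when $\mu\ge 0$ and $b\le b^*$, giving $m_{\mu,b}\ge 0$. The matching bound $m_{\mu,b}\le 0$ follows by evaluating $E_{\mu,b}(v^\rho)$ on any fixed $v_0\in B_1$ and letting $\rho\to 0^+$, since every term of $E_{\mu,b}(v^\rho)$ then tends to $0$. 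To treat $b>b^*$, I would instead choose $v_0:=Q^*/\|Q^*\|_2$; Pohozaev \eqref{Q-De-p} yields $\|\Delta v_0\|_2=1$ and $\int|v_0|^{2+8/d}dx=(1+4/d)/b^*$, whence
\[
E_{\mu,b}(v^\rho)=\tfrac{\rho^4}{2}\bigl(1-\tfrac{b}{b^*}\bigr)+\tfrac{\mu\rho^2}{2}\|\nabla v_0\|_2^2\longrightarrow -\infty\quad(\rho\to+\infty),
\]
so $m_{\mu,b}=-\infty$.

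\medskip

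\noindent\textbf{No critical points when $\mu>0$.} This step carries the main conceptual content. If $u\in B_1$ were a critical point of $E_{\mu,b}$ with $b\le b^*$, then by Lagrange multipliers $u$ solves \eqref{4abc} with $\gamma=1$, $c=b$ and $p=1+\tfrac{8}{d}$, and the second Pohozaev identity in \eqref{e:Qu} specializes to
\[
\|\Delta u\|_2^2+\tfrac{\mu}{2}\|\nabla u\|_2^2=\tfrac{bd}{d+4}\int |u|^{2+8/d}dx.
\]
Applying \eqref{4.0} together with $b\le b^*$ bounds the right-hand side by exactly $\|\Delta u\|_2^2$ (the algebraic identity $d(1+4/d)/(d+4)=1$ being the reason the estimate is \emph{sharp}), which forces $\tfrac{\mu}{2}\|\nabla u\|_2^2\le 0$. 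Since $\mu>0$, this gives $\nabla u\equiv 0$, contradicting $\|u\|_2=1$. Combined with the computation of $m_{\mu,b}$ above and the trivial observation that $-\infty$ cannot be attained when $b>b^*$, this rules out minimizers of $m_{\mu,b}$ for every $b>0$.

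\medskip

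\noindent\textbf{The borderline $\mu=0$.} When $b=b^*$, the normalized ground state $v_0:=Q^*/\|Q^*\|_2\in B_1$ satisfies, using \eqref{Q-De-p},
\[
E_{0,b^*}(v_0)=\tfrac{1}{2}-\tfrac{b^*(1+4/d)}{(2+8/d)\,b^*}=0=m_{0,b^*},
\]
so $v_0$ is a minimizer. Conversely, when $0<b<b^*$ the lower bound above is strict: $E_{0,b}(u)\ge \tfrac{1}{2}(1-b/b^*)\|\Delta u\|_2^2>0$ for every $u\in B_1$, whereas $m_{0,b}=0$, so the infimum is not attained.
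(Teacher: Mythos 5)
Your proof is correct and follows essentially the route the paper intends: the sharp Gagliardo--Nirenberg inequality \eqref{4.0} plus the $L^2$-preserving scaling for the value of $m_{\mu,b}$, and the Pohozaev identity \eqref{e:Qu} to rule out critical points when $\mu>0$. The paper itself only sketches this (deferring to \cite[Theorem 1.2]{BCGJ} and the Pohozaev identity), so your write-up simply supplies the details it omits, and they all check out.
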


Hence in the remaining of the section we mainly consider the case $\mu<0$. 
\begin{lemma}\label{lm4.1}
Let $\mu<0$ and $p=1+\frac8d$.  Then
\begin{equation}\label{4.4}
\begin{cases}
-\infty<m_{\mu, b}<0, & 0<b< b^*,\\
m_{\mu, b}=-\infty,     &  b\geq b^*.
\end{cases}
\end{equation}
\end{lemma}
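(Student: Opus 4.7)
The plan is to treat the two regimes separately, using the scaling $v^{\rho}(x):=\rho^{d/2}v(\rho x)$ (which preserves the constraint $\|v\|_2=1$) for upper bounds and the sharp Gagliardo--Nirenberg inequality \eqref{4.0} for lower bounds.

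First, for the regime $b \geq b^*$, I would exhibit a one-parameter family along which $E_{\mu,b}$ drives to $-\infty$. Take $u:=Q^*/\|Q^*\|_2 \in B_1$ and apply the scaling $u^{\rho}$. Using the Pohozaev relation \eqref{Q-De-p}, which gives $\|\Delta Q^*\|_2^2=\|Q^*\|_2^2$ and $\int |Q^*|^{2+\frac{8}{d}}=(1+\tfrac{4}{d})\|Q^*\|_2^2$, a direct computation yields
\begin{equation*}
E_{\mu,b}(u^{\rho}) \;=\; \frac{\rho^4}{2}\Big(1-\frac{b}{b^*}\Big) \;+\; \frac{\mu\rho^2}{2}\|\nabla u\|_2^2.
\end{equation*}
When $b=b^*$ the quartic term vanishes and the negativity of $\mu$ forces $E_{\mu,b^*}(u^{\rho})\to -\infty$ as $\rho \to \infty$; when $b>b^*$ the quartic coefficient becomes negative and dominates, giving the same conclusion. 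Thus $m_{\mu,b}=-\infty$ for $b \geq b^*$.

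Next, for $0<b<b^*$, I would establish $m_{\mu,b}<0$ by a similar scaling argument: for any fixed $v_0 \in B_1$,
\begin{equation*}
E_{\mu,b}(v_0^{\rho}) \;=\; \frac{\rho^4}{2}\Big(\|\Delta v_0\|_2^2 - \tfrac{b}{b^*}\tfrac{\|Q^*\|_2^{8/d}\|v_0\|_{2+8/d}^{2+8/d}}{1+4/d}\Big) \;+\; \frac{\mu\rho^2}{2}\|\nabla v_0\|_2^2,
\end{equation*}
so for $\rho$ small enough the quadratic term in $\rho$, which is strictly negative since $\mu<0$, dominates and produces $E_{\mu,b}(v_0^{\rho})<0$.

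Finally, for the crucial lower bound $m_{\mu,b}>-\infty$ when $0<b<b^*$, I would combine the sharp Gagliardo--Nirenberg inequality \eqref{4.0} with the elementary inequality $\|\nabla u\|_2^2 \le \|\Delta u\|_2 \|u\|_2=\|\Delta u\|_2$ valid for $u \in B_1$. For any $u \in B_1$ this yields
\begin{equation*}
E_{\mu,b}(u) \;\geq\; \frac{1}{2}\Big(1-\frac{b}{b^*}\Big)\|\Delta u\|_2^2 \;+\; \frac{\mu}{2}\|\Delta u\|_2,
\end{equation*}
and since $1-b/b^*>0$ this is an upward-opening quadratic in $y=\|\Delta u\|_2$, bounded below by $-\mu^2/\bigl(8(1-b/b^*)\bigr)$. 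Hence $m_{\mu,b}$ is finite. The only mildly delicate point is that the scaling ansatz for $b \geq b^*$ must be tailored to $Q^*$ so that the Pohozaev identities annihilate the quartic terms; for general test functions one only gets a bound, not a blow-down to $-\infty$. Apart from that, the argument is a routine combination of scaling and the sharp $L^{2+8/d}$ bound.
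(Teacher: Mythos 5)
Your proposal is correct and follows essentially the same route as the paper: the coercivity bound $E_{\mu,b}(u)\ge \frac12(1-\frac{b}{b^*})\|\Delta u\|_2^2+\frac{\mu}{2}\|\Delta u\|_2$ on $B_1$ for the lower bound, small-$\rho$ scaling for $m_{\mu,b}<0$, and the rescaled ground state $Q^*$ sent to $\rho\to\infty$ for $b\ge b^*$. If anything, your version is slightly cleaner in that you explicitly normalize $Q^*$ into $B_1$ before scaling, a step the paper's computation implicitly assumes.
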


\begin{proof}
First, by \eqref{4.0} and the inequality $\|\nabla u\|_2^2\leq \|\Delta u\|_2 \|u\|_2$, we have for all $u\in H^2$ 
\begin{eqnarray*}
E_{\mu,b}(u)\ge
 \frac{1}{2}\Big[1-\dfrac{b}{b^*}\norm{u}_2^{\frac8d} \Big]\|\Delta u\|_2^2 + \frac{\mu}{2}\|\Delta u\|_2\|u\|_2\,.
\end{eqnarray*}
Thus
\begin{eqnarray}\label{4.5}
E_{\mu,b}(u)\geq \frac{1}{2}\Big[1-\dfrac{b}{b^*} \Big]\|\Delta u\|_2^2 + \frac{\mu}{2}\|\Delta u\|_2,\quad \forall u\in B_1,
\end{eqnarray}
which implies that $m_{\mu,b}\neq -\infty$ for every $0<b<b^*$. 
 To show that $m_{\mu,b}<0$ if $0<b<b^*$, we consider the scaling
$v^{\rho}=\rho^{\frac d2} v_0(\rho x)$, where $v_0\in B_1$ is given and $\rho>0$ is an arbitrary constant. Then $v^{\rho}\in B_1$ for any $\rho>0$  and
\begin{eqnarray}\label{4.6}
E_{\mu,b}( v^{\rho} )&=&\frac{\rho^4 }{2} \|\Delta v_0\|_2^2+\frac{\mu \rho^2}{2}\|\nabla
v_0\|_2^2-\frac{\rho^{4}b}{2+\frac{8}{d}}\|v_0\|_{2+\frac{8}{d}}^{2+\frac{8}{d}}\\
&<& \frac{\rho^4}{2} \|\Delta v_0\|_2^2+\frac{\mu \rho^2}{2}\|\nabla \nonumber
v_0\|_2^2\,.
\end{eqnarray}
Then by taking $\rho=\rho_0:=\frac{ \sqrt{-\mu} }{2}\cdot \frac{\|\nabla v_0\|_2} {\|\Delta v_0\|_2}$,  
we see 
$E_{\mu,b}( v^{\rho} )<\frac{\mu \rho^2}{4}\|\nabla v_0\|_2^2:= -C_0<0$. This proves that $-\infty< m_{\mu,b}<0$ if $0<b<b^*$.

If $b\geq b^*$, replacing $v_0$ in \eqref{4.6} by $Q^*$ with $Q^*$ given as in \eqref{4.0}, we have
\begin{eqnarray*}
E_{\mu,b}( (Q^*)^{\rho} )&=& \frac{\rho^4}{2} \|\Delta Q^*\|_2^2+\frac{\mu \rho^2}{2}\|\nabla
Q^*\|_2^2-\frac{\rho^{4}b}{2+\frac{8}{d}}\|Q^*\|_{2+\frac{8}{d}}^{2+\frac{8}{d}} \\
&=& \frac{\rho^4}{2}\Big[1-\dfrac{b}{b^*} \Big] \|\Delta Q^*\|_2^2 + \frac{\mu \rho^2}{2}\|\nabla Q^*\|_2^2\\
&\leq& \frac{\mu \rho^2}{2}\|\nabla Q^*\|_2^2,\quad \mbox{since } b\geq b^*.
\end{eqnarray*}
In view of $\mu<0$, the preceding inequality implies that $E_{\mu,b}( (Q^*)^{\rho} )\to -\infty$ as $\rho \to +\infty$. Thus $m_{\mu,b}=-\infty$ as $b\geq b^*$.
\end{proof}

Lemma \ref{lm4.1} informs that  $m_{\mu,b}<0$ provided $0<b<b^*$. This allows us to prove the following proposition on the existence of $\mathcal{M}_{\mu,b}$
by following the idea of the proof of Proposition \ref{M}, Case $(ii)$ and Proposition \ref{pr:mu2}.
  \begin{proposition}\label{prop4.1} Let $p=1+\frac8d$.
For any given $\mu\in (-\dfrac{4\|\nabla Q^*\|^2_2}{\|Q^*\|^2_2},0)$, define as in \eqref{4.00}
\begin{eqnarray*}
b^*=\|Q^*\|_2^{\frac{8}{d}},\quad  b_*=b^*\Big[1+ \dfrac{\|Q^*\|_2^2}{4\|\Delta Q^*\|_2^2}(\mu^2+\dfrac{4\|\nabla Q^* \|_2^2}{\|Q^*\|_2^2}\mu)\Big].
\end{eqnarray*}
Then  $0< b_*<b^*$, and for all $b\in (b_*, b^*)$, any minimizing sequence of $m_{\mu,b}$ is pre-compact in $H^2$. Moreover, there exists some  $u\in B_1$  such that
\begin{align}\label{mmb-E}
m_{\mu,b}=E_{\mu,b}(u),
\end{align}
namely $\mathcal{M}_{\mu,b} \neq \emptyset$.
\end{proposition}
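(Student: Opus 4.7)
The plan is to model the proof on Case (ii) of Proposition \ref{M}, incorporating the device introduced in Proposition \ref{pr:mu2} to secure non-vanishing, and then closing with the profile decomposition of Proposition \ref{decomposition}. Concretely, I will carry out four steps: (i) check $0<b_*<b^*$; (ii) establish $H^2$-boundedness of any minimizing sequence $\{v_n\}\subset B_1$; (iii) prove non-vanishing in $L^{2+8/d}$; (iv) extract a single non-trivial profile and upgrade weak to strong convergence.

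For step (i), using the Pohozaev relation $\|Q^*\|_2^2=\|\Delta Q^*\|_2^2$ from (\ref{Q-De-p}), the definition collapses to
\[
\tfrac{b_*}{b^*}=1+\tfrac{\mu^2}{4}+\tfrac{\mu\|\nabla Q^*\|_2^2}{\|Q^*\|_2^2}\,.
\]
Since $\mu\in(-\lambda_1,0)$ forces $\mu(\mu+\lambda_1)<0$, one has $b_*<b^*$; and the Cauchy--Schwarz bound $\|\nabla Q^*\|_2^2\leq\|\Delta Q^*\|_2\|Q^*\|_2=\|Q^*\|_2^2$ keeps the negative correction strictly above $-1$, giving $b_*>0$. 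Step (ii) is immediate from estimate (\ref{4.5}): since $1-b/b^*>0$, the right-hand side is a quadratic in $\|\Delta u\|_2$ with positive leading coefficient, so $E_{\mu,b}(v_n)\to m_{\mu,b}\in(-\infty,0)$ forces $\{\|\Delta v_n\|_2\}$ to stay bounded.

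The crux (step (iii)) is to show $m_{\mu,b}<-\mu^2/8$, which I view as the main analytic hurdle of the proof. Using the trial $v^\rho(x)=\rho^{d/2}Q^*(\rho x)/\|Q^*\|_2\in B_1$ and (\ref{Q-De-p}), one computes
\[
E_{\mu,b}(v^\rho)=\tfrac12\bigl(1-\tfrac{b}{b^*}\bigr)\rho^4+\tfrac{\mu}{2}\tfrac{\|\nabla Q^*\|_2^2}{\|Q^*\|_2^2}\rho^2,
\]
so minimizing over $\rho>0$ yields $m_{\mu,b}\leq -\tfrac{\mu^2\|\nabla Q^*\|_2^4}{8\|Q^*\|_2^4(1-b/b^*)}$. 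The key algebraic identity
\[
\tfrac{\|\nabla Q^*\|_2^4}{\|Q^*\|_2^4}-\Bigl(1-\tfrac{b_*}{b^*}\Bigr)=\Bigl(\tfrac{\mu}{2}+\tfrac{\|\nabla Q^*\|_2^2}{\|Q^*\|_2^2}\Bigr)^2\geq 0
\]
combined with $b>b_*$ then gives $1-b/b^*<\|\nabla Q^*\|_2^4/\|Q^*\|_2^4$, hence $m_{\mu,b}<-\mu^2/8$ strictly. The perfect-square structure here is what singles out $b_*$ as the sharp threshold. From the identity
\[
\tfrac{b}{2+8/d}\int|v_n|^{2+8/d}=\tfrac12\|\Delta v_n\|_2^2+\tfrac{\mu}{2}\|\nabla v_n\|_2^2-E_{\mu,b}(v_n),
\]
the bound $\|\nabla v_n\|_2^2\leq\|\Delta v_n\|_2$ on $B_1$ and $\min_{y\geq 0}\bigl(\tfrac12 y^2+\tfrac{\mu}{2}y\bigr)=-\mu^2/8$ together yield $\int|v_n|^{2+8/d}\geq C_0>0$ for all $n$ large, which is the non-vanishing needed.

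For step (iv) I apply Proposition \ref{decomposition} to $\{v_n\}$, rescale each profile by $V^j_{\rho_j}=\rho_j V^j(\cdot-x_n^j)$ with $\rho_j=1/\|V^j\|_2\geq 1$, and compute
\[
E_{\mu,b}\bigl(V^j(\cdot-x_n^j)\bigr)=\tfrac{E_{\mu,b}(V^j_{\rho_j})}{\rho_j^2}+\tfrac{b(\rho_j^{8/d}-1)}{2+8/d}\|V^j\|_{2+8/d}^{2+8/d}\,,
\]
whose second term is nonnegative since $\rho_j\geq 1$, and whose first term is at least $m_{\mu,b}\|V^j\|_2^2$; an analogous bound holds for $E_{\mu,b}(r_n^l)$. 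Summing via (\ref{2.7})--(\ref{2.37}), using $\sum_j\|V^j\|_2^2+\|r_n^l\|_2^2=1+o(1)$ and $m_{\mu,b}<0$, forces every nonnegative correction to vanish in the limit, so exactly one profile $V^{j_0}$ has $\|V^{j_0}\|_2=1$, every other $V^j$ is zero, and $\|r_n^l\|_2\to 0$. Non-vanishing from step (iii) guarantees $V^{j_0}\not\equiv 0$, so $V^{j_0}\in B_1$. The upgrade to strong convergence then proceeds exactly as in (\ref{E-mu-r_n}): the bound $\|\nabla r_n^{j_0}\|_2^2\leq\|\Delta r_n^{j_0}\|_2\|r_n^{j_0}\|_2\to 0$, together with Gagliardo--Nirenberg interpolation between $\|r_n^{j_0}\|_2\to 0$ and the $H^2$-bound giving $\|r_n^{j_0}\|_{2+8/d}\to 0$, force $\|\Delta r_n^{j_0}\|_2\to 0$ and $E_{\mu,b}(V^{j_0})=m_{\mu,b}$, establishing (\ref{mmb-E}).
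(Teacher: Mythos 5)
Your proposal is correct and follows essentially the same route as the paper: the decisive step is exactly the paper's Lemma \ref{lm4.6}, namely showing $m_{\mu,b}<-\mu^2/8$ for $b\in(b_*,b^*)$ via a $Q^*$-based trial function and the Pohozaev identity (\ref{Q-De-p}), after which non-vanishing follows as in Lemma \ref{lm4.5} and the profile decomposition closes the argument as in Case (ii) of Proposition \ref{M}. The only (harmless) deviation is that you optimize the trial function over the dilation parameter $\rho$ and invoke the perfect-square identity, whereas the paper simply evaluates at $\rho=1$; your version in fact yields $m_{\mu,b}<-\mu^2/8$ on the slightly larger range $b/b^*>1-\|\nabla Q^*\|_2^4/\|Q^*\|_2^4$, so the remark that the perfect square ``singles out $b_*$ as sharp'' is not quite accurate, but this does not affect the proof of the stated proposition.
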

One  main ingredient in the proof is to establish the non-vanishing property for any minimizing sequence of (\ref{1.7}). 
If $p<1+\frac8d$, $\mu<0$, one can prove  (\ref{3.00nv}) based on the fact that $m_0<0$, see 
 (\ref{3.6}) and (\ref{Mmu+}). 
However, if $p=1+\frac8d$, we know $m_{0, b}=0$, for all $ 0<b< b^*$ in view of Lemma \ref{lm4.0}.
 This would present an obstacle for showing the non-vanishing property (\ref{4.10}) or (\ref{4.11}) as below
for any minimizing sequence of $m_{\mu,b}$.  
  To overcome this obstacle, we need to proceed differently. Following the spirit of Proposition \ref{pr:mu2}
  we shall prove the non-vanishing property under the natural condition (\ref{4.9}), 
as is required in the following two lemmas. 

\begin{lemma}\label{lm4.5} 
Let $\mu<0$ and  $0<b<b^*$ satisfy 
\begin{eqnarray}\label{4.9}
m_{\mu,b}<-\dfrac{\mu^2}{8}.
\end{eqnarray} 
Let  $\{u_n\}_{n=1}^{+\infty}\in B_1$ be an arbitrary minimizing sequence of $m_{\mu,b}$. Then there exists a constant 
$C_0$ independent of $n$ such that
\begin{eqnarray}\label{4.10}
\int |u_n|^{2+\frac{8}{d}}dx\geq C_0>0\qquad
\end{eqnarray}
 for $n$ large enough. \end{lemma}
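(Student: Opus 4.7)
The plan is to mimic the proof of Proposition \ref{pr:mu2}, exploiting the natural quadratic lower bound that the hypothesis $m_{\mu,b}<-\mu^2/8$ is tailored for. From the definition of $E_{\mu,b}$ I would first rewrite
\[
\frac{b}{2+\frac{8}{d}}\int|u_n|^{2+\frac{8}{d}}\,dx \;=\; \frac{1}{2}\|\Delta u_n\|_2^2+\frac{\mu}{2}\|\nabla u_n\|_2^2-E_{\mu,b}(u_n).
\]
Since $u_n\in B_1$, the interpolation inequality $\|\nabla u_n\|_2^2\le \|\Delta u_n\|_2\|u_n\|_2=\|\Delta u_n\|_2$ combined with $\mu<0$ yields
\[
\frac{\mu}{2}\|\nabla u_n\|_2^2 \;\ge\; \frac{\mu}{2}\|\Delta u_n\|_2,
\]
so the right side is bounded below by $g(\|\Delta u_n\|_2)-E_{\mu,b}(u_n)$, where $g(y):=\tfrac12 y^2+\tfrac{\mu}{2}y$.

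The second step is the elementary observation that $g$ achieves its global minimum at $y=-\mu/2>0$ with minimum value $-\mu^2/8$. Therefore, independently of $n$,
\[
\frac{b}{2+\frac{8}{d}}\int|u_n|^{2+\frac{8}{d}}\,dx \;\ge\; -\frac{\mu^2}{8}-E_{\mu,b}(u_n).
\]
Given any $\eta>0$, the minimizing property of $\{u_n\}$ guarantees $E_{\mu,b}(u_n)<m_{\mu,b}+\eta$ for all sufficiently large $n$; plugging this in gives
\[
\frac{b}{2+\frac{8}{d}}\int|u_n|^{2+\frac{8}{d}}\,dx \;\ge\; -\frac{\mu^2}{8}-m_{\mu,b}-\eta.
\]

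By hypothesis $-\mu^2/8-m_{\mu,b}>0$, so I would choose $\eta\in\bigl(0,\,-\mu^2/8-m_{\mu,b}\bigr)$ and set
\[
C_0 \;:=\; \Bigl(2+\tfrac{8}{d}\Bigr)\,b^{-1}\Bigl(-\tfrac{\mu^2}{8}-m_{\mu,b}-\eta\Bigr)\;>\;0,
\]
which delivers the required non-vanishing estimate $\int|u_n|^{2+\frac{8}{d}}\,dx\ge C_0$ for $n$ large enough. There is no real obstacle here: the only delicate point is recognising that the sign condition on $\mu$ makes the quadratic $g$ bounded below by exactly $-\mu^2/8$, which is the very reason the hypothesis \eqref{4.9} is formulated this way. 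The argument is a verbatim adaptation of the reasoning leading to \eqref{vn-p+eta} in Proposition \ref{pr:mu2}, with $\frac{\mu}{p+1}\int|v_n|^{p+1}$ replaced by its $L^2$-critical counterpart and no use of the $f_k$ functions needed.
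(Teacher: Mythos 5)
Your proof is correct and rests on exactly the same ingredients as the paper's: the interpolation $\|\nabla u_n\|_2^2\le\|\Delta u_n\|_2\|u_n\|_2$, the sign of $\mu$, and the lower bound $-\mu^2/8$ for the quadratic $\tfrac12 y^2+\tfrac{\mu}{2}y$, played against hypothesis \eqref{4.9}. The only cosmetic difference is that the paper runs the argument by contradiction (assuming the $L^{2+8/d}$ norms vanish and deducing $m_{\mu,b}\ge-\mu^2/8$), whereas you give the direct quantitative version in the style of Proposition \ref{pr:mu2}, which is if anything slightly cleaner since it produces the explicit constant $C_0$.
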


\begin{proof} We only need to show $\liminf_n  \int |u_n|^{2+\frac{8}{d}}dx\ne 0$.
Assume the contrary. Then there is a subsequence of $\{u_n\}$ (still denoted by $\{u_n\}$) 
so that  $\lim_n\int |u_n|^{2+\frac{8}{d}}dx=0$. 
Then, from $m_{\mu,b}=\lim\limits_{n\to +\infty}E_{\mu,b}(u_n)$ we have
\[\|\Delta u_n\|_2^2 +\mu \|\nabla u_n\|_2^2\to 2m_{\mu,b} \qquad \text{as $n\to +\iy$}.
\]
By the inequality $\|\nabla u_n\|_2^2 \leq \|\Delta u_n\|_2\|u_n\|_2$ we deduce that
\[
\|\Delta u_n\|_2^2 +\mu \|\Delta u_n\|_2\leq \|\Delta u_n\|_2^2 +\mu \|\nabla u_n\|_2^2\to 2m_{\mu,b}\,.
\]
Since the function $y^2+\mu y$ has a minimum $-\dfrac{\mu^2}{4}$, by passing to the limit we obtain
\[
-\frac{\mu^2}{4}\le 2m_{\mu,b}\, ,
\]
which is a contradiction to (\ref{4.9}).  This proves the lemma.
\end{proof}

\begin{lemma}\label{lm4.6} Let $p=1+\frac8d$. Given $\mu\in (-\dfrac{4\|\nabla Q^*\|^2_2}{\|Q^*\|^2_2},0)$,  let $b_*$ and $b^*$ be defined in \eqref{4.00}. Then
\begin{itemize}
\item [(i)] $0< b_*<b^*$;
\item [(ii)] For all $b\in (b_*, b^*)$, $(\mu, b)$ satisfies \eqref{4.9}.  In particular, any minimizing sequence $\{u_n\}$ of $m_{\mu,b}$ is non-vanishing in the following sense:
There exists  some constant $C_0$ independent of $n$ such that
\begin{eqnarray}\label{4.11}
\int |u_n|^{q}dx\geq C_0>0,\qquad \forall q\in \left(2,\frac{2d}{(d-4)^{+}}\right)
\end{eqnarray}
when $n$ is large enough. 
\end{itemize}
\end{lemma}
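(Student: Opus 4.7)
My plan is to treat parts (i) and (ii) separately, with both relying on the Pohozaev identity \eqref{Q-De-p} to eliminate $\|\Delta Q^*\|_2^2$ in favor of $\|Q^*\|_2^2$.

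For part (i), since $\|\Delta Q^*\|_2^2 = \|Q^*\|_2^2$ by \eqref{Q-De-p}, the definition of $b_*$ simplifies to
\[
b_* \;=\; b^*\Big[\,1 + \tfrac{1}{4}\mu(\mu+\lambda_1)\,\Big].
\]
For $\mu \in (-\lambda_1,0)$ one has $\mu(\mu+\lambda_1) < 0$, immediately giving $b_* < b^*$. For the lower bound $b_* > 0$, I will observe that $-\mu(\mu+\lambda_1)$ attains its maximum $\lambda_1^2/4$ at $\mu=-\lambda_1/2$, so it suffices to verify $\lambda_1 < 4$. Invoking Cauchy--Schwarz together with Pohozaev,
\[
\|\nabla Q^*\|_2^2 \;=\; \langle -\Delta Q^*,\,Q^*\rangle \;\le\; \|\Delta Q^*\|_2\,\|Q^*\|_2 \;=\; \|Q^*\|_2^2,
\]
with strict inequality since $-\Delta Q^* = c Q^*$ has no nontrivial $L^2$ solution. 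Thus $\lambda_1 = 4\|\nabla Q^*\|_2^2/\|Q^*\|_2^2 < 4$, and the bracket exceeds $1-\lambda_1^2/16 > 0$ as required.

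For part (ii), I will use the scaled ground state
\[
v^\rho(x) \;:=\; \tfrac{\rho^{d/2}}{\|Q^*\|_2}\,Q^*(\rho x), \qquad \rho>0,
\]
as a test function; note that $v^\rho \in B_1$. Applying \eqref{Q-De-p} to each term yields
\[
\|\Delta v^\rho\|_2^2 = \rho^4,\qquad \|\nabla v^\rho\|_2^2 = \tfrac{\lambda_1}{4}\rho^2,\qquad \int|v^\rho|^{2+8/d}\,dx = \tfrac{1+4/d}{b^*}\,\rho^4,
\]
so that
\[
E_{\mu,b}(v^\rho) \;=\; \tfrac{1}{2}\bigl(1 - \tfrac{b}{b^*}\bigr)\rho^4 \;+\; \tfrac{\mu\lambda_1}{8}\rho^2.
\]
Since $1-b/b^*>0$ and $\mu<0$, minimizing in $\rho$ at $\rho^2 = -\mu\lambda_1/(8(1-b/b^*))$ gives
\[
m_{\mu,b} \;\le\; \min_{\rho>0} E_{\mu,b}(v^\rho) \;=\; -\,\frac{\mu^2\lambda_1^2}{128\,(1-b/b^*)}.
\]
To obtain \eqref{4.9}, i.e.\ $m_{\mu,b} < -\mu^2/8$, I need the strict inequality $\lambda_1^2 > 16(1-b/b^*)$, equivalently $b > b^*(1-\lambda_1^2/16)$. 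This is exactly what the hypothesis $b>b_*$ delivers, via the identity
\[
b_* - b^*\bigl(1 - \tfrac{\lambda_1^2}{16}\bigr) \;=\; \tfrac{b^*}{16}(2\mu+\lambda_1)^2 \;\ge\; 0,
\]
so $b > b_* \ge b^*(1-\lambda_1^2/16)$, strictly. Feeding \eqref{4.9} into Lemma \ref{lm4.5} produces the non-vanishing bound \eqref{4.10} for $q=2+8/d$. To pass to general $q\in(2,\tfrac{2d}{(d-4)^+})$ in \eqref{4.11}, I will use that $\{u_n\}\subset B_1$ is bounded in $H^2$ by \eqref{4.5}, together with Gagliardo--Nirenberg/H\"older interpolation between $L^2$ (where the mass is $1$) and either $L^{2+8/d}$ or a higher Sobolev-admissible exponent, which transfers the positive lower bound on $\|u_n\|_{2+8/d}$ to every exponent $q$ in the allowed range.

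The main obstacle is reconciling the two distinct algebraic expressions appearing in the statement: the threshold $b_*$ from \eqref{4.00} on the one hand, and the condition $\lambda_1^2>16(1-b/b^*)$ coming out of the test-function minimization on the other. The clean way through is the square identity $(2\mu+\lambda_1)^2 \ge 0$ displayed above, which essentially says that $b_*$ was precisely designed so that $b>b_*$ is sufficient (though not always necessary) for the sharp energy estimate $m_{\mu,b}<-\mu^2/8$ that Lemma \ref{lm4.5} requires.
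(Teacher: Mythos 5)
Your proof is correct, and it follows the same overall template as the paper's --- a test-function upper bound on $m_{\mu,b}$ fed into Lemma \ref{lm4.5}, then interpolation to general $q$ --- but the execution of part (ii) is genuinely different in one respect. The paper takes the single test function $v_0=Q^*/\|Q^*\|_2$ (your $\rho=1$), uses \eqref{Q-De-p} to write $E_{\mu,b}(v_0)=\frac{1}{2\|Q^*\|_2^2}\big[(1-\frac{b}{b^*})\|\Delta Q^*\|_2^2+\mu\|\nabla Q^*\|_2^2\big]$, and checks that $E_{\mu,b}(v_0)<-\mu^2/8$ is \emph{equivalent} to $b>b_*$; no reconciliation is needed because $b_*$ is by construction the threshold for that one test function. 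You instead optimize over the scaling parameter $\rho$, which produces the weaker sufficient condition $b>b^*(1-\lambda_1^2/16)$ and therefore requires your perfect-square identity $b_*-b^*\bigl(1-\tfrac{\lambda_1^2}{16}\bigr)=\tfrac{b^*}{16}(2\mu+\lambda_1)^2\ge 0$ to recover the stated range. The payoff is that your argument establishes \eqref{4.9} on the strictly larger interval $\bigl(b^*(1-\tfrac{\lambda_1^2}{16}),\,b^*\bigr)$ whenever $2\mu+\lambda_1\neq 0$, which is relevant to the paper's later remark that $b_*$ ``seems to suggest \ldots a sharp lower bound'': your computation shows $b_*$ is not optimal for the sufficient condition \eqref{4.9} itself (though this does not settle whether minimizers exist below $b_*$). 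In part (i) you also supply a detail the paper elides with ``we easily observe'': the verification that $\lambda_1<4$, via $\|\nabla Q^*\|_2^2\le\|\Delta Q^*\|_2\|Q^*\|_2=\|Q^*\|_2^2$ with strictness because $-\Delta$ has no $L^2(\R^d)$ eigenfunctions; this is genuinely needed for $b_*>0$ to hold across the entire interval $\mu\in(-\lambda_1,0)$, so including it strengthens the write-up.
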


\begin{proof} Indeed, by the definition of $b_*$, we easily observe that $0<b_*<b^*$ if and only if  
 $\mu\in (-\dfrac{4\|\nabla Q^*\|^2_2}{\|Q^*\|^2_2},0)$. 
 Then $(i)$ is verified.

To show $(ii)$, 
define $v_0:=\frac{Q^*}{\|Q^*\|_2}$. Then $v_0\in B_1$ and 
we note from (\ref{Q-De-p}) that  
\[
E_{\mu,b}(v_0)=\dfrac{1}{2\|Q^*\|_2^2}\Big[(1-\dfrac{b}{b^*})\|\Delta Q^*\|_2^2 +\mu\|\nabla Q^*\|_2^2 \Big].
\]
We have for $0<b<b^*$,
\begin{eqnarray*}
 E_{\mu,b}(v_0) < -\dfrac{\mu^2}{8} 
&\Longleftrightarrow& \mu^2 +  \dfrac{4\|\nabla Q^*\|_2^2}{\|Q^*\|_2^2} \mu +  \dfrac{4}{\|Q^*\|_2^2}(1-\dfrac{b}{b^*})\|\Delta Q^*\|_2^2 <0\\
&\Longleftrightarrow& b>b^*\Big[1+ \dfrac{\|Q^*\|^2_2}{4\|\Delta Q^*\|^2_2} (\mu^2 + \dfrac{4\|\nabla Q^*\|^2_2}{\|Q^*\|^2_2} \mu )\Big] \\
&\Longleftrightarrow& b>b_*.
\end{eqnarray*}
So, given $\mu\in (-\dfrac{4\|\nabla Q^*\|^2_2}{\|Q^*\|^2_2},0)$, for any $b\in (b_*,b^*)$ we have
\begin{eqnarray*}
E_{\mu,b}(v_0)<-\dfrac{\mu^2}{8} 
\end{eqnarray*}
which implies 
\[\  m_{\mu,b}<-\dfrac{\mu^2}{8}\,.\]
According to Lemma \ref{lm4.5}, $\{u_n\}$ is non-vanishing in $L^{2+\frac{8}{d}}$. Furthermore, by an interpolation inequality, \eqref{4.11} follows for all $q$ in $(2,\frac{2d}{(d-4)^+})$.  
This completes the proof. 
\end{proof}
\begin{remark} In view of Lemma \ref{lm4.1},
the constants $b^*$ is a sharp upper bound in the sense that $\mathcal{M}_{\mu,b}=\emptyset$
for any $b\ge b^*$. The analysis in the proof of Lemma \ref{lm4.6} also seems to suggest $b_*$ is a sharp lower bound, 
however, we are not able to verify this at present.  
\end{remark}

\begin{proof}[{\bf  Proof of Proposition \ref{prop4.1}}] 
We note from the proof of Proposition \ref{M}, Case $(ii)$ that to prove  any minimizing sequence of $m_{\mu,b}$ is pre-compact in $H^2$ and (\ref{mmb-E}) by the profile decomposition method, 
we only to show  that any minimizing sequence of $m_{\mu,b}$ is non-vanishing in the sense of \eqref{4.11}. 
However, according to Lemma \ref{lm4.5} and Lemma \ref{lm4.6},  \eqref{4.11} holds  under the assumptions on $\mu$ and $b$ in this proposition. 
Therefore, we have proven Proposition \ref{prop4.1}.
\end{proof}

\begin{proof}[{\bf Proof of Theorem \ref{th2}}]
 Let $\psi(t)$ be the solution of the Cauchy problem (\ref{BNLS})-(\ref{BNLS1}) with initial datum $\psi_0\in H^2$. By \eqref{4.5} and the conservation laws of energy and mass in Proposition \ref{local}, we deduce that for all $t\in I:=[0,T)$ 
\begin{eqnarray}\label{4.7}
\begin{array}{lll} \vspace{0.3cm}
 E_{\mu,b}(\psi_0)=E_{\mu,b}(\psi(t))\geq \dfrac{1}{2}\Big[1-\dfrac{b}{b^*} \Big]\|\Delta \psi(t)\|_2^2 + \dfrac{\mu }{2}\|\Delta \psi(t)\|_2\|\psi_0\|_2\,.
\end{array}
\end{eqnarray}
If $0<b<b^*$, then  \eqref{4.7} implies $\{\| \psi(t) \|_{H^2}\}$ is bounded for all $t\in I$. Thus from Proposition \ref{local} we know $\psi(t)$  exists globally in time.
In virtue of Proposition \ref{prop4.1}, it remains to show the stability of $\mathcal{M}_{\mu,b}$ by a standard contradiction argument as given in the proof of Theorem \ref{th1}.
Therefore the proof is complete.
\end{proof}

\begin{proof}[{\bf Proof of Theorem \ref{th22}}]  As we have remarked in the introduction, the proof of Theorem \ref{th22} is a straightforward technical translation verbatim from that of\\ \mbox{Theorem \ref{th2}.} 
\end{proof}


\section{Concluding remarks}
The study of stable ground states solutions is a central problem for higher-order dispersive equations in the past few decades. 
Concerning the existence and stability theory for standing waves of fourth-order NLS (\ref{BNLS}) there have been growing activities in this field
 \cite{BaruchFibichMandelbaum2010,BCMN,BCGJ,Fibich2002,NataliPastor2015,PoSte17,Segata2010}, where was mainly considered the case $\mu\ge 0$ using different methods. 
Our primary contribution is to treat the technically more challenging case $\mu<0$ by constructing an orbitally stable set of g.s.s., which has filled the gap as elaborated in the introduction.  
Moreover, at the critical exponent $p=1+\frac8d$, 
we have essentially showed in Theorem \ref{th22} and  Lemma \ref{lm4.1} 
that $\norm{Q^*}_2$ 
is the threshold for the existence of g.s.s. of (\ref{MinP}), or equivalently (\ref{BNLS})  for suitable $\mu<0$. 

The existence of minimizers for certain negative $\mu$ was partially studied in \cite{BonhNa15} using a different method constricted to 
the submanifold  $\{ \Vert u\Vert_{p+1}=1 \}$ in $H^2$, an equivalent of the Nehari manifold.  However, there were not revealed the admissible values of the mass levels or the relation between the range limit of $\mu$  and the g.s.s. $Q_p=Q(p,d)$. 
Also the stability issue were not available via the Nehari manifold method.  
The profile decomposition method we employed allows to address the existence problem for 
(\ref{BNLS}) or (\ref{1.3}) for both signs of $\mu$, which gives a simpler approach than e.g. \cite{BCMN} in the regime $p\in (1,1+\frac8d]$. 
We believe that the analysis in this paper provides certain optimal ranges for the parameter $\mu$ 
with both signs as shown in \mbox{Theorem \ref{th1}} and Theorem \ref{th22}. 
The profile decomposition also allows to study the stability 
and instability problem on a deeper level in the regime $p\ge 1+\frac8d$. 
In this respect, one can find in  \cite{BL2015} some closely related open question in the case $\mu<0$,  
in particular at the threshold level $Q^*$ if $p=1+\frac8d$, comparing \cite{Weinstein1986} for the corresponding paradigm for the classical NLS.
We will continue to investigate this model in a sequel to this work.  
The variational analysis elaborated in this paper and \cite{ZhangZhengZhu2019,ZhuZhaYa2011,zhu2016}
could potentially lead to sharper and more accurate descriptions of the asymptotic behaviors for the solitary waves by incorporating some of the spectral information for the associated linearized operators around the ground state down the path \cite{BF2011,BCMN,BCGJ,BL2015,NataliPastor2015,Weinstein1986}. 

\vspace{0.5cm}

\noindent\textbf{Acknowledgments}\; T.-J. Luo is partially supported by NSFC 11501137 and GDNSFC 2016A030310258.  S.-H. Zhu is partially supported by NSFC  11501395.  S.-J. Zheng would like to thank Atanas Stefanov and Kai Yang for helpful comments in their communications.

\end{document}